\newcommand{\NN}{\mathbb{N}}
\newcommand{\ZZ}{\mathbb{Z}}
\newcommand{\RR}{\mathbb{R}}
\newcommand{\CC}{\mathbb{C}}
\newcommand{\PP}{\mathbb{P}}
\newcommand{\BBA}{\mathbb{A}}
\DeclareMathOperator{\rk}{rk}
\DeclareMathOperator{\Proj}{Proj}
\DeclareMathOperator{\Ima}{Im}
\DeclareMathOperator{\Trop}{Trop}
\DeclareMathOperator{\Cl}{Cl}
\newcommand{\MI}{\mathcal{I}}
\newcommand{\MP}{\mathcal{P}}
\newcommand{\Fl}{\mathscr{F}\!\ell}
\newcommand{\binomial}[2]{{ \mathbf{{#1}_{#2}}}}
\theoremstyle{definition}
\newtheorem{definition}{Definition}
\newtheorem{remark}{Remark}
\newtheorem{case}{Case}
\newtheorem{subcase}{Case}
\numberwithin{subcase}{case}
\newtheorem*{notation}{Notation}
\newtheorem{example}{Example}
\newtheorem{setup}{Setup}
\newtheorem{procedure}{Procedure}
\theoremstyle{plain}
\newtheorem{lemma}{Lemma}
\newtheorem{corollary}{Corollary}
\newtheorem{prop}{Proposition}
\newtheorem{conjecture}{Conjecture}
\newcommand{\Bsig}{B^\sigma}
\DeclareMathOperator{\argmin}{argmin}
\DeclareMathOperator{\Gr}{Gr}
\DeclareMathOperator{\conv}{Conv}
\DeclareMathOperator{\Ker}{ker}
\DeclareMathOperator{\sgn}{sgn}
\newlist{condenum}{enumerate}{1} 
\setlist[condenum]{label=\bfseries Condition \arabic*., 
                   ref=\arabic*, wide}
\title{Toric degenerations of partial flag varieties\\ and combinatorial mutations of matching field polytopes}
\author{Oliver Clarke, Fatemeh Mohammadi and Francesca Zaffalon
}
\begin{document}

\maketitle

\noindent{\bf Abstract.}
\noindent We study toric degenerations arising from Gr\"obner degenerations or the tropicalization of partial flag varieties. We produce a new family of toric degenerations of partial flag varieties whose combinatorics are governed by matching fields and combinatorial mutations of polytopes. We provide an explicit description of the polytopes associated with the resulting toric varieties in terms of matching field polytopes. These polytopes encode the combinatorial data of monomial degenerations of Pl\"ucker forms for the Grassmannian. We give a description of matching field polytopes of flag varieties as Minkowski sums and show that all such polytopes are normal. The polytopes we obtain are examples of Newton-Okounkov bodies for particular full-rank valuations for partial flag varieties. Furthermore, we study a certain explicitly-defined large family of matching field polytopes and prove that all polytopes in this family are connected by combinatorial mutations. Finally, we apply our methods to explicitly compute toric degenerations of small Grassmannians and flag varieties and obtain new families of toric degenerations.

{\hypersetup{linkcolor=black}
\setcounter{tocdepth}{2}\setlength\cftbeforesecskip{1.1pt}
{\tableofcontents}}

\section{Introduction}
\noindent{\bf Background and motivation.} We study toric degenerations of partial flag varieties, in particular Grassmannians and full flag varieties. As a set, the full flag variety $\Fl_n$ is the collection of complete flags $V_0 = \{0\}\subsetneq V_1 \subsetneq \cdots \subsetneq V_{n-1}\subsetneq V_n = \CC^n$ where $V_i$ is an $i$-dimensional vector subspace of $\CC^n$. More generally, for each subset ${K} \subseteq [n] := \{1, \dots, n\}$, the partial flag variety $\Fl({K};n)$ consists of the set of flags $V_{{k}_1} \subsetneq V_{{k}_2} \subsetneq \dots \subsetneq V_{{k}_d}$ where $V_{{k}_i}$ is a ${k}_i$-dimensional vector subspace of $\CC^n$, ${k}_i \in K$. For example, the Grassmannian $\Gr(k,n)$ is the partial flag variety $\Fl(k ; n)$ of $k$-dimensional linear subspaces of $\CC^n$. Grassmannians and (partial) flag varieties have been studied extensively in the literature from different perspectives such as algebraic geometry \cite{KOGAN, gonciulea1996degenerations},  representation theory \cite{makhlin2020gelfand, cerulli2020linear}, cluster algebras \cite{gross2014canonical, rietsch2019newton, bossinger2020toric}, as well as combinatorics \cite{brandt2021tropical}. In this paper, we will study their toric degenerations from a computational perspective.

A toric degeneration \cite{Anderson2013} of an algebraic variety $X$ is a flat family $\mathcal{F}\to \BBA^1$ whose fibers $\mathcal F_t$ over all points $t\in \BBA^{1}\setminus \{0\}$ are isomorphic to $X$ and whose fiber over $0$ is a toric variety $Y := \mathcal F_0$. Toric degenerations are particularly useful because many important algebraic invariants of $X$, such as the Hilbert polynomial and the degree, coincide with those of $Y$. Hence, it is often practical to study the invariants of $X$ using the toric variety $Y$ since toric varieties have rich combinatorial structure. This is due to a well-established dichotomy between toric varieties and discrete geometric objects, such as polyhedral fans and polytopes \cite{Cox2011}. We refer to a variety $Y$ as above as a toric degeneration of $X$.

\medskip

The study of toric degenerations is motivated by two natural questions: how do we obtain toric degenerations? And what is the relationship between two different toric degenerations of a fixed algebraic variety? To answer the first question, one general approach is through Gr\"obner degeneration \cite{Kaveh2019a}, which is further explained in \Cref{sec: toric degen from matching fields}. Such toric degenerations of a closed subvariety $V(\MI)$ of a large space are produced by toric (binomial and prime) initial ideals of $\MI$. The second question involves understanding properties of algebraic varieties that are preserved under a toric degeneration. 

\medskip

The classes of toric degenerations that we study arise from matching fields \cite{Sturmfels1993Maximal}. These have been introduced and studied for various kinds of homogeneous spaces, such as Grassmannians \cite{clarke2022combinatorial, clarke2022combinatorial2}, Schubert varieties \cite{Clarke2021b} and full flag varieties \cite{Clarke2021}. In the case of full flag varieties, the idea is as follows. Recall that the flag variety $\Fl_n$ naturally lives in a product of Grassmannians $\mathrm{Gr}(1,n)\times \dots \times \mathrm{Gr}(n-1,n)$ by identifying each flag $V_0 \subsetneq V_1 \subsetneq \cdots \subsetneq V_n$ with the tuple $(V_0, V_1, \dots, V_n)$. This product embeds, via the Pl\"ucker embedding, into a product of projective spaces $\PP^{\binom{n}{1} - 1}\times \dots \times \PP^{\binom{n}{n-1} - 1}$. On the level of rings, this embedding is obtained from the map 
\[
  \phi: 
  \CC\left[p_I \mid \varnothing \subsetneq I \subsetneq [n]\right] 
  \to \CC\left[x_{i,j} \mid i\in [n-1], \ j\in [n]\right], \ 
  p_I  \mapsto \det(X_I) ,
\]
where $X = (x_{i,j})$ is an $(n-1)\times n$ matrix of variables and $X 
_I$ is the submatrix of $X$ on the first $|I|$ rows {and on columns indexed by $I$,} for each $I \subset [n]$.
On the other hand, toric subvarieties of $\PP^{\binom{n}{1}-1}\times \dots \times \PP^{\binom{n}{n-1}-1}$ arise from monomial maps $\CC\left[p_I\ |\ \varnothing \subsetneq I \subsetneq [n]\right] \to S$, for some polynomial ring $S$.
Hence, a good candidate for a Gr\"obner degeneration of $\Fl_n$ is given by deforming $\phi$ to a monomial map. This is done by sending each \emph{Pl\"ucker variable} $p_I$ to one of the summands of $\det(X_I)$.
A \emph{matching field} is a combinatorial object which encodes this data. 

\medskip

Matching fields were originally introduced in \cite{Sturmfels1993Maximal} to study Newton polytopes of products of maximal minors. A matching field is a map $\Lambda$ that sends each variable $p_I$ to a permutation $\sigma \in S_{|I|}$. In particular, this gives a natural monomial map which corresponds to a toric subvariety of the product of projective spaces above. 
In order for a matching field $\Lambda$ to give rise to a toric degeneration of $\Fl_n$, it is necessary for $\Lambda$ to be \textit{coherent}, i.e., induced by a \textit{weight matrix} $M_\Lambda$. 
To obtain a toric degeneration, one takes the initial ideal of the Pl\"ucker ideal with respect to the image of $M_\Lambda$ under the \textit{tropical Stiefel map}~\cite{fink2015stiefel}. 

\medskip

Toric degenerations arising from matching fields have been studied in great detail in the literature \cite{clarke2020toric, Clarke2021, higashitani2022quadratic, Mohammadi2019}. Such degenerations correspond to top-dimensional cones of the tropical variety as in \cite{fink2015stiefel,Kaveh2019a}. There remain many open questions in this area. Notably, which matching fields produce toric degenerations?
In studying this question we use \textit{combinatorial mutations}, introduced in \cite{Akhtar2012}. A combinatorial mutation is a special kind of piecewise linear map between polytopes. We say that two polytopes are \textit{mutation equivalent} if there exists a sequence of combinatorial mutations between them.

\medskip

Each monomial map obtained from a coherent matching field $\Lambda$ gives rise to a projective toric variety $V(\ker(\phi_\Lambda))$. In general, this is not a toric degeneration of $\Fl_n$. We associate to $\Lambda$ the \textit{matching field polytope} $\mathcal{P}_\Lambda$ which is the toric polytope of $V(\ker(\phi_\Lambda))$. 
Note that the above constructions also apply to the case of partial flag varieties $\Fl({K};n)$. In particular, if $\Lambda$ is a matching field for $\Fl({K};n)$, then we write $\mathcal{P}_\Lambda^{{K}}$ for the corresponding matching field polytope. Whenever the partial flag variety is clear from context, we will omit it from the notation and simply write $\mathcal{P}_\Lambda$ for the polytope.

\medskip

\noindent{\bf Our contribution.} 
We study a large family of matching field polytopes and we show that: all such polytopes are normal; the property of giving rise to a toric degeneration is preserved by combinatorial mutation.
More precisely, a matching field $\Lambda$ inherits the property of giving rise to a toric degeneration from another matching field $\Lambda'$ whenever $\mathcal{P}_\Lambda$ and $\mathcal{P}_{\Lambda'}$ are mutation equivalent. 

\begin{restatable}{thm}{toricdegenpartialflag}\label{thm:toric degen partial flag}
Fix $n \in \ZZ$, $n>0$ and a subset ${K} \subseteq [n]$. Let $\Lambda$ be a matching field for the partial flag variety $\Fl({K};n)$. If the matching field polytope $\mathcal{P}_\Lambda$ is combinatorial mutation equivalent to the Gelfand-Tsetlin polytope, then $\Lambda$ gives rise to a toric degeneration of $\Fl({K};n)$. 
\end{restatable}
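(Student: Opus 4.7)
The plan is to reduce the statement to a Hilbert function comparison, exploiting the fact that the Gelfand--Tsetlin polytope $P_{\mathrm{GT}}$ is already known to arise from a toric degeneration of $\Fl({K};n)$. Specifically, $P_{\mathrm{GT}}$ is the matching field polytope of the diagonal matching field $\Lambda_{\mathrm{d}}$, which is coherent and gives a toric degeneration of $\Fl({K};n)$ by classical results (Gonciulea--Lakshmibai, Kogan--Miller, Caldero). In particular, the Hilbert function of $\Fl({K};n)$ equals that of the toric variety $V(\ker(\phi_{\Lambda_{\mathrm{d}}}))$ and, because $P_{\mathrm{GT}}$ is normal, agrees with the Ehrhart polynomial of $P_{\mathrm{GT}}$.

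Next, I would invoke two key inputs. First, by the theorem of Akhtar--Coates--Galkin--Kasprzyk, combinatorial mutations preserve the Ehrhart polynomial of a lattice polytope, so the hypothesis that $\mathcal{P}_\Lambda$ is mutation equivalent to $P_{\mathrm{GT}}$ yields the equality of their Ehrhart polynomials. Second, by the normality theorem established earlier in the paper (valid for every matching field polytope in the family under consideration), the Hilbert function of the projective toric variety $V(\ker(\phi_\Lambda))$ is recovered from the Ehrhart polynomial of $\mathcal{P}_\Lambda$. Combining these, the Hilbert function of $V(\ker(\phi_\Lambda))$ coincides with that of $\Fl({K};n)$.

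For the final step, let $w_\Lambda$ be the weight on Plücker variables obtained by applying the tropical Stiefel map to the weight matrix $M_\Lambda$, and set $J := \mathrm{in}_{w_\Lambda}(I_{K;n})$, where $I_{K;n}$ is the Plücker ideal of $\Fl({K};n)$. The standard inclusion $J \subseteq \ker(\phi_\Lambda)$ holds: for any $f \in I_{K;n}$, one checks that $\phi_\Lambda(\mathrm{in}_{w_\Lambda}(f))$ collects initial terms of $\phi(f) = 0$ and hence vanishes. Gröbner degeneration preserves the Hilbert function, so $R/J$ and $R/I_{K;n}$ share the same Hilbert function, which by the previous paragraph also agrees with that of $R/\ker(\phi_\Lambda)$. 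Together with $J \subseteq \ker(\phi_\Lambda)$, this forces $J = \ker(\phi_\Lambda)$. Since $\ker(\phi_\Lambda)$ is toric (binomial and prime), so is $J$, realizing $V(\ker(\phi_\Lambda))$ as a toric degeneration of $\Fl({K};n)$.

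The main obstacle is bridging the combinatorial statement (equality of Ehrhart polynomials via mutation) to the algebraic statement (equality of Hilbert functions of the relevant toric varieties), which is precisely where the normality theorem from earlier in the paper plays a crucial role. Without normality, the toric variety $V(\ker(\phi_\Lambda))$ embeds with a Hilbert function that may differ from the Ehrhart polynomial of $\mathcal{P}_\Lambda$, and the Hilbert-function comparison collapses. Beyond this, the argument is largely formal: mutation invariance of Ehrhart polynomials together with the containment-and-Hilbert-function argument for Gröbner degenerations, followed by an appeal to the known base case provided by $P_{\mathrm{GT}}$.
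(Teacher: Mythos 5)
Your proposal is correct and follows essentially the same route as the paper's proof: establish the base case via the diagonal (Gelfand--Tsetlin) matching field, use normality of both polytopes to identify Hilbert functions with Ehrhart polynomials, invoke mutation-invariance of the Ehrhart polynomial, and conclude via the standard inclusion $\mathrm{in}_{w_\Lambda}(\MI)\subseteq\ker(\phi_\Lambda)$ together with equality of Hilbert functions under Gr\"obner degeneration. No substantive differences to report.
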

We also study a new family of coherent matching fields $B^\sigma$ for $\Fl({K};n)$ indexed by permutations $\sigma\in S_n$, which generalises many of the families studied in previous works \cite{clarke2022combinatorial, clarke2020toric, Clarke2021,higashitani2022quadratic, Mohammadi2019}. 
We investigate those giving rise to toric degenerations of $\Fl({K};n)$ by constructing combinatorial mutations between them. We note that the matching field $B^{w_0}$ for $w_0 = (n,n-1, \dots, 1)$ and ${K} \subseteq [n]$ is the diagonal matching field which classically gives rise to a toric degeneration of $\Fl({K};n)$ \cite{Miller2004}. The corresponding matching field polytopes 
$\mathcal{P}_{w_0}^{{K}}$ are the well-studied Gelfand-Tsetlin polytopes. 
{We describe a family of matching fields indexed by permutations avoiding specific patterns, given in \Cref{def : permutation pattern}, that gives rise to toric degenerations of $\Fl({K};n)$ as follows.}

\begin{restatable}{thm}{DSRTwoFlagVarieties}
\label{thm: DSR_2 Flag varieties}
{Fix $n \in \ZZ$, $n>0$ and a subset ${K} \subseteq [n]$.} If $\sigma\in S_n$ is a permutation that avoids the patterns $4123, 3124, 1423$ and $1324$, then the polytope {$\mathcal{P}^{{K}}_{\Bsig}$ }is combinatorial mutation equivalent to the Gelfand-Tsetlin polytope $\mathcal{P}_{w_0}^{{K}}$.
It follows that $B^\sigma$ gives rise to a toric degeneration of {$\Fl({K};n)$}. {In particular, by taking ${K}=\{k\}$ or ${K}=[n]$, the matching field $B^\sigma$ gives rise to a toric degeneration of $\Gr(k,n)$ or $\Fl_n$, respectively.}
\end{restatable}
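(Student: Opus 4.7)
The second assertion is immediate from the first combined with \Cref{thm:toric degen partial flag}, so I focus on the mutation equivalence. The overall plan is to exhibit a chain $\sigma = \sigma_0, \sigma_1, \ldots, \sigma_N = w_0$ in $S_n$, with each step $\sigma_j \to \sigma_{j+1}$ differing by a simple move and each intermediate $\sigma_j$ avoiding the four forbidden patterns, and then to produce one combinatorial mutation per step.

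First, a purely combinatorial step. I would show that the set of $\sigma \in S_n$ avoiding $4123, 3124, 1423, 1324$ is connected to $w_0$ under right-multiplication by simple transpositions $s_i = (i, i{+}1)$ that decrease the number of ascents of $\sigma$, and that the path can be chosen to remain inside the set of avoiders. This reduces to a case analysis on the local configuration around an ascent $\sigma(i) < \sigma(i+1)$: the four forbidden patterns are precisely those four-entry configurations in which, relative to such an ascent, no ascent-killing simple swap preserves avoidance. Hence whenever an avoider $\sigma$ is not $w_0$, some simple swap yields an avoider with strictly more inversions, and the chain can always be extended until it reaches $w_0$.

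The heart of the argument is a single local mutation statement: for each adjacent pair $\tau, \tau' = \tau s_i$ of avoiders along the chain, the polytopes $\mathcal{P}^{{K}}_{B^\tau}$ and $\mathcal{P}^{{K}}_{B^{\tau'}}$ are related by one combinatorial mutation. The matching fields $B^\tau$ and $B^{\tau'}$ only disagree on tuples $I \subseteq [n]$ containing both values swapped by $s_i$, so the vertex sets of the two polytopes coincide outside a controlled subset and differ there by translation through a fixed integer vector $w$. I would take $w$ as the mutation vector, identify the factor $F$ as a convex hull determined by the unaffected vertices, and verify that the resulting piecewise-linear map is a combinatorial mutation in the sense of~\cite{Akhtar2012}. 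The pattern-avoidance hypothesis enters exactly here: the forbidden patterns are precisely the configurations in which the candidate swap translation would break the Minkowski slice decomposition prescribed by the definition of a mutation, so ruling them out makes the verification go through.

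The main obstacle I anticipate is this verification step: showing that the vertex set of each polytope decomposes compatibly with the candidate pair $(w, F)$, and that the mutation sends vertices to vertices with neither creation nor loss. A secondary point, handled by the Minkowski sum description of $\mathcal{P}^{{K}}_\Lambda$ proven elsewhere in the paper, is that the construction is uniform in ${K}$: combinatorial mutations behave well with respect to Minkowski summation, so once the Grassmannian cases ${K} = \{k\}$ are settled the partial flag case follows formally. Composing the local mutations along the chain from the first step then gives $\mathcal{P}^{{K}}_{\Bsig} \sim \mathcal{P}^{{K}}_{\Bwo} = \mathcal{P}_{w_0}^{{K}}$, and \Cref{thm:toric degen partial flag} yields the toric degeneration, completing the proof.
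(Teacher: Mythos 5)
Your high-level skeleton — induct on the inversion number, move from $\sigma$ toward $w_0$ one transposition at a time while staying inside the class of pattern-avoiders, and realise each step by one combinatorial mutation — is indeed the architecture of the paper's proof. But there are two genuine gaps. First, the elementary move is wrong as stated: you take $\tau' = \tau s_i$, a swap of \emph{adjacent positions}, and claim the two matching fields disagree only on a controlled family of tuples so that the affected vertices differ by a fixed translation vector $w$. For a position swap this fails: $B^\sigma(I)$ is governed by the comparison $\sigma(p)>\sigma(q)$ for the two smallest elements $p<q$ of $I$, and swapping $\sigma(i)$ with $\sigma(i+1)$ changes this comparison for every pair $\{p,q\}$ meeting $\{i,i+1\}$ whose other value lies between $\sigma(i)$ and $\sigma(i+1)$ — an uncontrolled set, and the resulting vertex displacements are not all equal to one vector. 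The paper instead swaps two \emph{consecutive values}, $\tau=(\ell\ \ell+1)\sigma$ with $\lambda=\sigma^{-1}(\ell)<\mu=\sigma^{-1}(\ell+1)$ (\Cref{setup flag}); then only the subsets whose two smallest elements are exactly $\{\lambda,\mu\}$ change, all by the single vector $w$ of \Cref{def flag tropical map}. Moreover the paper does not take an arbitrary inversion-increasing value swap: it makes the canonical choice $\mu=\max\{i:\sigma(i)>n+1-i\}$, and this maximality is used repeatedly (to force $j_1<\mu$ in \Cref{lemma: sum vertices flag} and to prove closure of the pattern class in \Cref{lemma: bad patterns flag}). Your unproved connectivity claim for generic ascent-killing swaps would not feed into those lemmas even if true.

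Second, and more seriously, your assertion that ``combinatorial mutations behave well with respect to Minkowski summation, so once the Grassmannian cases are settled the partial flag case follows formally'' is false and inverts where the difficulty lies. A tropical map $\varphi_{w,F}$ is only piecewise linear, so $\varphi_{w,F}(P+Q)\neq\varphi_{w,F}(P)+\varphi_{w,F}(Q)$ in general: points of the sum can be written using one summand on the positive side of $f^{\perp}$ and one on the negative side, and the map is not additive across that wall. The entire content of \Cref{lemma: sum vertices flag} and \Cref{prop: mutation flag} is to show that every such cross-term $v_I^k+v_J^h$ with $\langle f,v_I^k\rangle=-1$, $\langle f,v_J^h\rangle=1$ can be rewritten as $u_1+u_2$ with $u_1,u_2\in f^{\perp}$, which yields $\mathcal{P}_\sigma^{K,\pm}=\sum_{k\in K}\mathcal{P}_\sigma^{k,\pm}$ and hence that $\varphi^{\sigma,\ell}$ really does carry $\mathcal{P}_\sigma^{K}$ onto $\mathcal{P}_\tau^{K}$. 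This rewriting is exactly where the hypotheses that $\sigma$ avoids $1423$ and $4123$ (forcing $\max\{j_1,j_2\}>\lambda$) and the maximality of $\mu$ enter; the remaining two patterns are needed so that $\tau$ inherits avoidance and the induction can continue. By contrast, the single-Grassmannian vertex bijection (\Cref{prop: tropical map bijection flag}) is the easy part. To repair your proposal you would need to replace the position swaps by the paper's consecutive-value swaps with the canonical choice of $\ell$, and supply the cross-term rewriting argument rather than appealing to a formal compatibility of mutations with Minkowski sums.
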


Our main tool for understanding matching field polytopes for partial flag varieties is the Minkowski sum property. In general, the matching field polytopes for Grassmannians $\mathrm{Gr}(k,n)$ for $k\in {K}\subseteq [n]$ can be used to build the matching field polytopes for the partial flag variety $\Fl({K};n)$.

\begin{restatable}[Minkowski~sum~property]{prop}{Minkowskisums}\label{Minkowski}
Let ${K} \subseteq [n]$ be a non-empty subset and $X_\Lambda^{{K}}\subseteq \prod_{k \in {K}} \PP^{\binom{n}{k} - 1}$ be the toric subvariety associated to a matching field $\Lambda^{{K}}$.~Then the polytope $\mathcal{P}^{{K}}_\Lambda$~of~$X^{{K}}_\Lambda$~is~equal to the Minkowski sum of the polytopes $\mathcal{P}^{k}_{\Lambda}$ associated to the matching fields $\Lambda^{k}$ for~each~$k \in {K}$.
\end{restatable}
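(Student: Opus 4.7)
The plan is to unpack both sides into explicit descriptions as convex hulls of finite point sets in a common lattice, and then apply the elementary identity $\conv(A)+\conv(B)=\conv(A+B)$ for finite sets. For the matching field $\Lambda^{{K}}$, the induced monomial map $\phi_{\Lambda^{{K}}}\colon \CC[p_I : I\in \binom{[n]}{{K}}] \to \CC[x_{i,j}]$ sends each Pl\"ucker variable $p_I$ to a squarefree monomial $x^{v_\Lambda(I)}$ in the matrix variables, where $v_\Lambda(I)\in\ZZ^{(n-1)\times n}$ is the exponent vector determined by the permutation $\Lambda(I)$. Restricting to subsets of fixed size $k\in {K}$ recovers the Grassmannian matching field $\Lambda^{k}$, whose toric polytope is $\mathcal{P}^{k}_\Lambda = \conv\{v_\Lambda(I) : I\in \binom{[n]}{k}\}$.

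Next, the key step is to give an analogous explicit description of $\mathcal{P}^{{K}}_\Lambda$. Since $X^{{K}}_\Lambda$ lives in a product of projective spaces, one equips it with the natural polarization $\MO(1,\dots,1)$; equivalently, one Segre-embeds $\prod_{k\in {K}} \PP^{\binom{n}{k}-1}$ into a single projective space. The monomials of multi-degree $(1,\dots,1)$ in $\CC[p_I : I\in \binom{[n]}{{K}}]$ are precisely the products $\prod_{k\in {K}} p_{I_k}$ with $I_k\in \binom{[n]}{k}$, and each such product is sent by $\phi_{\Lambda^{{K}}}$ to the monomial with exponent vector $\sum_{k\in {K}} v_\Lambda(I_k)$. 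Under this identification,
\[
 \mathcal{P}^{{K}}_\Lambda = \conv\left\{ \sum_{k\in {K}} v_\Lambda(I_k) \;:\; I_k\in \binom{[n]}{k} \text{ for each } k\in {K} \right\}.
\]

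The conclusion then follows from the standard convex-geometric identity $\conv(A_1)+\cdots+\conv(A_r)=\conv(A_1+\cdots+A_r)$ for non-empty finite sets $A_1,\dots,A_r\subset\RR^N$, applied to $A_k=\{v_\Lambda(I) : I\in \binom{[n]}{k}\}$ for $k\in {K}$. The main obstacle is establishing the displayed description of $\mathcal{P}^{{K}}_\Lambda$, which depends on interpreting the \emph{toric polytope of} $X^{{K}}_\Lambda$ inside a product of projective spaces as the one arising from the polarization $\MO(1,\dots,1)$ (equivalently, via the Segre embedding into a single projective space), rather than any other natural candidate; one must also check that the lattice in which both $\mathcal{P}^{{K}}_\Lambda$ and each $\mathcal{P}^{k}_\Lambda$ sit is the common integer lattice $\ZZ^{(n-1)\times n}$ of exponent vectors, so that the Minkowski sum is well-defined. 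Once these points are settled, the remainder of the argument is essentially a one-line application of the Minkowski sum identity.
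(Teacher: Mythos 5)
Your proof is correct and is essentially the paper's argument in different clothing: the paper decomposes the source polytope $\mathcal{P}^{K}=\prod_{k\in K}\Delta_{\binomial{n}{k}-1}$ as a Minkowski sum of embedded simplices and uses that linear maps commute with Minkowski sums, while you work directly with the vertex description $\mathcal{P}^{K}_\Lambda=\conv\{\sum_{k\in K}v_\Lambda(I_k)\}$ and invoke the equivalent identity $\conv(A_1+\cdots+A_r)=\conv(A_1)+\cdots+\conv(A_r)$. Your identification of the toric polytope via the multidegree-$(1,\dots,1)$ monomials agrees with the paper's definition $\mathcal{P}^{K}_\Lambda=(\hat\phi^{K}_\Lambda\otimes_\ZZ\RR)(\mathcal{P}^{K})$, so the argument goes through.
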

As mentioned above, given a closed subvariety of an algebraic torus $X =V(\MI)$, the initial ideals associated to the top-dimensional cones of $\Trop(X)$ are good candidates to give toric degenerations \cite[Lemma~1]{bossinger2017computing}. A maximal cone in $\Trop(X)$ is called prime if it gives rise to a toric degeneration of $X$. In \cite{escobarwall}, the authors study the relationship between polytopes arising via toric degenerations from the interior of two adjacent prime cones in $\Trop(\Gr(2,n)^{\circ})$. It is shown that these polytopes are related by piecewise linear maps and, in the appendix by Ilten, it is shown that the linear maps are combinatorial mutations, which are analogous to those we study. As noted in \cite[Example 4.65]{bossinger2021families}, these mutations can be derived from the \textit{Fock–Goncharov tropicalization of cluster mutation} \cite[Definition~1.22]{gross2014canonical}.
More specifically, the tropical maps that arise from cluster mutation have the property that their factor polytope is a line segment. We note that all mutations that we study have this property as well. See \Cref{def flag tropical map}.
\medskip

\noindent
\textbf{Computational Results.}
In \Cref{sec: computational results}, we compute tropicalizations of  $\Gr(3,6)$, $\Gr(3,7)$, $\Fl_4$~and~$\Fl_5$, and highlight the new families of toric degenerations obtained by our methods. 
We introduce the matching fields $B_c^\sigma$ where $\sigma \in S_n$ is a permutation and $c$ is a positive integer. If $c = 1$ then $B_c^\sigma = B^\sigma$ is the matching field studied earlier in the paper. If $\sigma = w_0$ then $B_c^\sigma$ leads to the Gelfand-Tsetlin degeneration for~each~$c$. 

We follow the same labelling scheme for maximal cones of tropical Grassmannians as in \cite{speyer2004tropical,herrmann2009draw}.
The computational results show that all maximal cones of $\Trop(\Gr(3,6)^{\circ})$ and many of the of the maximal cones of $\Trop(\Gr(3,7)^{\circ})$ that give rise to toric degenerations can be described by a matching field $B_c^\sigma$. Note that not all maximal cones can be described in this way, for instance those containing a sub-weight of type EEEE, which cannot be obtained from any matching field \cite{Mohammadi2019}. 
We summarise the computational results for $\Gr(3,7)$ in \Cref{example: gr37 all cones} and \Cref{tab: gr37 matching field polytopes}. 
As a result, of the $125$ isomorphism classes of maximal cones of $\Trop(\Gr(3,7)^{\circ})$ we have that $69$ of them are distinct prime cones and, of those, $40$ can be obtained from matching fields $B_c^\sigma$. Of these $40$, we have that $20$ of them can be obtained when $c = 1$.
In addition we have studied the adjacency of these prime cones. Our calculations show that a combinatorial mutation, described in the proof of \Cref{thm: DSR_2 Flag varieties}, corresponds to either staying in the same cone or moving to an adjacent cone, that is, to a cone sharing a codimension~one face.

For the full flag variety, the toric degenerations associated to each of the four prime cones of $\Trop(\Fl_4^{\circ})$ can be obtained by a matching field of the form $B_c^\sigma$. For $\Fl_5$, we obtain $22$ distinct toric degenerations. See \Cref{table: flag 4 matching fields} and \Cref{table: flag 5 matching fields}. 

\medskip

\noindent{\bf Structure of the paper.}
In \Cref{sec: toric subvarieties}, we apply tools from toric geometry to find properties of matching field polytopes. 
In particular, we detail a method to obtain the polytopes of toric subvarieties of a general smooth projective toric variety associated with a lattice polytope. See \Cref{proc: toric subvariety}. We then apply this procedure to the toric varieties associated to matching field polytopes for $\Fl({K};n)$.
In \Cref{Mink}, we show how these polytopes can be viewed as Minkowski sums of matching field polytopes of Grassmannians. In \Cref{sec: toric degen from matching fields}, we prove \Cref{thm:toric degen partial flag} by showing that all matching field polytopes are normal. See \Cref{prop: lattice points of flag polytopes}.
We then study the Pl\"ucker algebras from the point-of-view of SAGBI basis theory. See \Cref{cor:SAGBI}.

In \Cref{sec: matching fields from permutations}, we introduce the family of matching fields $\Bsig$ indexed by permutations $\sigma \in S_n$. This is a family of coherent matching fields that are simultaneously defined for all partial flag varieties $\Fl({K};n)$. In \Cref{flag variety section}, we prove \Cref{thm: DSR_2 Flag varieties} by showing that all matching field polytopes $\mathcal{P}_{\Bsig}^{{K}}$ are combinatorial mutation equivalent by introducing a special family of tropical maps. See \Cref{def flag tropical map}. 
In \Cref{sec: computational results}, we provide computational results that extend the family of matching fields $\Bsig$. See \Cref{def: scale the second row weight matrix}. We give complete computations for Grassmannians $\Gr(3, 6)$ and $\Gr(3,7)$ in \Cref{table: gr36 polytopes,tab: gr37 matching field polytopes}, along with the full flag varieties $\Fl_4$ and $\Fl_5$ in \Cref{table: flag 4 matching fields,table: flag 5 matching fields}.

\medskip
\noindent{\bf Acknowledgement.}
The authors were supported by the grants G0F5921N (Odysseus programme) and G023721N from the Research Foundation - Flanders (FWO), and the KU Leuven grant iBOF/23/064. F.M. was partially supported by the UiT Aurora project MASCOT. F.Z. was also partially supported by the FWO fundamental research fellowship (1189923N).
{We thank the reviewer for helpful comments and suggestions.
}
 
\section{Toric subvarieties and the Minkowski sum property}\label{sec: toric subvarieties}

In this section, we set up notation and explain our method for studying toric varieties and their polytopes. In particular, \Cref{proc: toric subvariety} gives an explicit way for constructing such polytopes. In \Cref{sec: matching fields}, we introduce matching fields and in \Cref{recipe} we apply \Cref{proc: toric subvariety} to toric subvarieties of multiprojective spaces arising from matching fields. In \Cref{Mink}, we give a construction for the matching field polytopes of $\Fl_n$ as Minkowski sums of matching field polytopes for the Grassmannians. 

\subsection{Closed toric subvarieties of a projective toric variety}\label{discussion}
Here, we derive a general procedure for constructing toric subvarieties of a given projective toric variety. For further details, we refer to \cite[Chapter 5]{Cox2011} and \cite[Chapter 10]{Miller2004}.

Fix a lattice $M \cong \ZZ^d$ and let $\mathcal{P} \subseteq M_\RR$ be a full-dimensional lattice polytope, i.e., the vertices $V(\mathcal{P}) \subseteq M$ are lattice points. {We denote by $\NN$ the set of non-negative integers $\ZZ_{\geq 0}$.} The polytope $\mathcal{P}$ is said to be \textit{normal} if for any $k \in \NN$ and any lattice point $\alpha \in (k\mathcal{P} \cap M)$ there exist lattice points $\alpha_1, \dots, \alpha_k \in (\mathcal{P} \cap M)$ such that $\alpha = \alpha_1 + \cdots + \alpha_k$. Throughout, we will assume that $\mathcal{P}$ is a normal lattice polytope and study its associated toric variety. Let $\{p_1, \dots, p_s\} = (\mathcal{P} \cap M)$ be the lattice points of $\mathcal{P}$. The toric variety $Y_\mathcal{P} \subseteq \PP^{s - 1}$ associated to $\mathcal{P}$ is the Zariski closure of the image of the map
\[
(\CC^*)^d \rightarrow \PP^{s-1} \colon t \mapsto \left( \chi^{p_1}(t), \dots, \chi^{p_s}(t) \right)
\]
where $\chi^{(m_1,\dots, m_d)}(t_1, \dots, t_d) := t_1^{m_1} t_2^{m_2} \cdots t_d^{m_d}$ is a \textit{character of the torus $(\CC^*)^d$}. The characters of $(\CC^*)^d$ naturally form a lattice with the group operation $(\chi^{m_1} \cdot \chi^{m_2})(t) := \chi^{m_1}(t) \chi^{m_2}(t) = \chi^{m_1 + m_2}(t)$. So the character lattice is naturally isomorphic to $M$ and by abuse of notation we will identify these lattices.

The homogeneous coordinate ring of $Y_\mathcal{P}$ is given by $\CC[C(\mathcal{P}) \cap (M \times \ZZ)]$ where 
\[
C(\mathcal{P}) := {\rm Cone}(\mathcal{P} \times \{1\}) = \{(\lambda p, \lambda) \in M_\RR \times \RR : \lambda \ge 0, \ p \in \mathcal{P} \} \subseteq M_\RR \times \RR.
\]

\subsubsection{The total coordinate ring and its grading}\label{sec:total}

Let $Y_\mathcal{P}$ be a smooth projective toric variety associated to a normal lattice polytope $\mathcal{P}\subseteq M_\RR$, where $M\cong \ZZ^{\rk(M)}$ is the character lattice of the torus of $Y_\mathcal{P}$. The goal of this subsection is to give a uniform description of toric subvarieties of $Y_\mathcal{P}$. Recall that $Y_\mathcal{P} = \Proj (\CC[C(\mathcal{P})\cap (M\times \ZZ)])$. Therefore, any toric subvariety of $Y_\mathcal{P}$ will be given by a monomial subalgebra of $\CC[C(\mathcal{P})\cap (M\times \ZZ)]$. Our approach to find these subalgebras is as follows. We associate to $Y_\mathcal{P}$ a particularly nice polynomial ring $R$ and obtain toric subvarieties of $Y_\mathcal{P}$ from toric ideals in $R$ satisfying a certain homogeneity condition.

We start with a set of inequalities defining the polytope $\mathcal{P}$. Let $f$ be the number of facets of $\mathcal{P}$. An \textit{H-description} of $\mathcal{P}$ is a collection of linear inequalities that define $\mathcal{P}$ as follows:
\[   x \in \mathcal{P} \iff \mathbf{B} \cdot x + \rho \geq \underline 0 \]
where $\mathbf B \in \ZZ^{f \times \rk(M)}$ is a full-rank matrix whose rows are primitive inner-normal vectors to the facets of $\mathcal{P}$, and $\rho \in {\RR^f}$ and $x \in M_\RR$ are column vectors.
We may think of $\mathbf B$ as a map from $M$ to $\ZZ^f$. Since $\mathcal{P}$ is full-dimensional, the columns of $\mathbf B$ are linearly independent. Hence, $\mathbf B$ embeds the character lattice $M$ into $\ZZ^f$. 
Since $Y_\mathcal{P}$ is smooth, we have that the only lattice points in $\mathbf B \cdot M_\RR\cap \ZZ^f$ are of the form $\mathbf B \cdot x$ for some $x \in M$. See \cite[Chapter 4]{Cox2011}. So the embedding can be completed to a short exact sequence of lattices
\begin{equation}
\begin{tikzcd}\label{picard seq}
0 \arrow[r] & M \arrow[r,"\mathbf B"] & \ZZ^f \arrow[r,"\mathbf D"] & {\Cl(Y_\MP)} \arrow[r] & 0.
\end{tikzcd}
\end{equation}
The geometric interpretation of this short exact sequence is given by: $\mathbf B$ is the map sending a character to its corresponding torus-invariant Weil divisor; and $\mathbf D$ is the map that takes a torus-invariant Weil divisor to its class in the class group {$\Cl(Y_\MP)$} of $Y_\mathcal{P}$.

\begin{definition}\label{def: total coordinate ring}
The \emph{total coordinate ring} $R$ of a smooth projective toric variety $Y_\mathcal{P}$ is the polynomial ring $\CC[z_1,\dots,z_f]$ with variables indexed by the facets of $\mathcal{P}$.
\end{definition}

The ring $R$ is graded by {$\Cl(Y_\MP)$}. For each $m \in \ZZ^f$, the monomial $x^m \in R$ has degree $\deg(x^m)= \mathbf D \cdot m \in {\Cl(Y_\MP)}$. The variety $Y_\mathcal{P}$ can be described purely in terms of the total coordinate ring.
We may rewrite the semigroup ring as follows: 
\[ \CC[C(\mathcal{P})\cap (M\times \ZZ)]=\CC\left[\bigoplus_{k=0}^{\infty}(k\mathcal{P}\cap M)\times \{k\}\right]. \]

The lattice points $(k\mathcal{P} \cap M)$ of each dilation $k\mathcal{P}$ can be viewed as a graded piece of $R$. Fix $k \in \NN$ and let 
\[ \iota : M_\RR \hookrightarrow \RR^f, \quad x \mapsto \mathbf{B} \cdot x + k \rho \]
be a translation of the embedding $\mathbf B$. By the definition of the H-description of $\mathcal{P}$, we have that a point $x \in M_\RR$ lies in $k\mathcal{P}$ if and only if $\iota(x) \in \RR_{\ge 0}^f$, that is $\mathbf B\cdot x + k\rho \ge \underline 0$. By the short exact sequence in \Cref{picard seq}, we have that
$\iota(k\mathcal{P})= k(\mathbf B \cdot \mathcal{P} + \rho) = \mathbf D^{-1}(k \underline d)\cap \RR^f_{\geq 0}$, where $\underline d = \mathbf D \cdot \rho$. In particular, $\iota$ gives a bijection between lattice points in $k\mathcal{P}$ and $k\underline d$-graded monomials in $R$, so that we have $Y_\mathcal{P} = \Proj\left(\CC\left[\bigoplus_{k=0}^{\infty}(k\mathcal{P}\cap M)\times \{k\}\right]\right)\cong \Proj\left( \bigoplus_{k=0}^{\infty}(R_{k\underline d})\right)$.

\subsubsection{Toric subvarieties from monomial maps}
The description of the coordinate ring of $Y_\mathcal{P}$ in terms of the total coordinate ring, in the previous section, allows us to generalise the correspondence between homogeneous ideals and closed subvarieties of projective spaces. Using the notation of Subsection~\ref{sec:total} we have the following.

\begin{lemma}[\cite{Cox2011}, Proposition~5.2.4]
Let $R$ be the total coordinate ring of a projective toric variety $Y_\mathcal{P}$ of a full dimensional lattice polytope $\mathcal{P}$. Then any {$\Cl(Y_\MP)$}-homogeneous ideal $\MI\subset R$ gives rise to a closed subvariety of $Y_\mathcal{P}$. Moreover, all closed subvarieties of $Y_\mathcal{P}$ arise in this way.  
\end{lemma}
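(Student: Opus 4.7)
The plan is to reduce the statement to the standard Cox quotient construction of $Y_\mathcal{P}$ and then transport the correspondence between invariant closed subschemes and invariant ideals to the $\Cl(Y_\mathcal{P})$-graded setting. First I would recall that, because $Y_\mathcal{P}$ is smooth and projective, the dual short exact sequence obtained from \Cref{picard seq} by applying $\Hom(-,\CC^*)$ gives an exact sequence of algebraic groups
\[
1 \longrightarrow G \longrightarrow (\CC^*)^f \longrightarrow T_{Y_\mathcal{P}} \longrightarrow 1,
\]
where $G := \Hom(\Cl(Y_\mathcal{P}),\CC^*)$ acts on $\Spec R = \CC^f$. Cox's construction then realises $Y_\mathcal{P}$ as the good categorical quotient $(\CC^f \setminus Z)/G$, where $Z = V(\FB)$ is the vanishing locus of the irrelevant ideal $\FB \subseteq R$ associated to the fan of $Y_\mathcal{P}$.

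Next I would record the basic dictionary between the $\Cl(Y_\mathcal{P})$-grading on $R$ and the $G$-action on $\CC^f$: a monomial $z^m \in R$ has $G$-weight $\mathbf{D} \cdot m$, so a subset $\MI \subseteq R$ is $\Cl(Y_\mathcal{P})$-homogeneous if and only if it is stable under the $G$-action on $R$. Consequently, a closed subscheme $V(\MI) \subseteq \CC^f$ is $G$-invariant precisely when $\MI$ can be generated by $\Cl(Y_\mathcal{P})$-homogeneous elements, i.e.\ when (after saturating or enlarging $\MI$ by its graded pieces) $\MI$ is homogeneous in our sense.

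For the correspondence itself, I would argue in two directions. Given a $\Cl(Y_\mathcal{P})$-homogeneous ideal $\MI \subseteq R$, the closed subvariety $V(\MI) \cap (\CC^f \setminus Z)$ is $G$-invariant and hence descends under the good quotient to a closed subscheme of $Y_\mathcal{P}$; this defines the assignment $\MI \mapsto V_{Y_\mathcal{P}}(\MI)$. Conversely, given a closed subvariety $Y \subseteq Y_\mathcal{P}$, its preimage under the quotient map $\pi : \CC^f \setminus Z \to Y_\mathcal{P}$ is a $G$-invariant closed subscheme of $\CC^f \setminus Z$; taking its scheme-theoretic closure in $\CC^f$ yields a $G$-invariant closed subscheme of $\CC^f$, whose defining ideal $\MI_Y \subseteq R$ is therefore $\Cl(Y_\mathcal{P})$-homogeneous and satisfies $V_{Y_\mathcal{P}}(\MI_Y) = Y$.

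The main obstacle, as usual for Proj-type correspondences, is that the assignment $\MI \mapsto V_{Y_\mathcal{P}}(\MI)$ is not injective: ideals agreeing away from the irrelevant locus $Z$ induce the same subvariety of $Y_\mathcal{P}$, so in particular the irrelevant ideal $\FB$ itself defines the empty subvariety. Since the proposition only asserts that every closed subvariety arises from \emph{some} $\Cl(Y_\mathcal{P})$-homogeneous ideal, it suffices to show that $\MI_Y$ constructed above is homogeneous and cuts out $Y$; I would not attempt to axiomatise the ``up to $\FB$-saturation'' equivalence here, since it plays no role in the later use of the lemma and since the reference \cite{Cox2011} treats it in full. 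With the quotient picture in hand, both the well-definedness of the two maps and the equality $V_{Y_\mathcal{P}}(\MI_Y) = Y$ reduce to standard properties of good categorical quotients.
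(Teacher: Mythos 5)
The paper gives no proof of this lemma—it is cited verbatim as \cite{Cox2011}, Proposition~5.2.4—so the only comparison available is with that reference, whose proof your sketch faithfully reproduces: realise $Y_\mathcal{P}$ as the Cox quotient $(\CC^f\setminus Z)/G$ with $G=\Hom(\Cl(Y_\mathcal{P}),\CC^*)$, identify $\Cl(Y_\mathcal{P})$-homogeneity of an ideal with $G$-invariance of its vanishing locus, and transport $G$-invariant closed subschemes through the good quotient in both directions. The outline is correct as stated; the only quibble is notational, since you denote the irrelevant ideal by $\mathbf{B}$ while that symbol already denotes the facet matrix of the H-description in this very subsection of the paper.
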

\noindent
In particular, any toric subvariety of $Y_\mathcal{P}$, that is, any closed subvariety of $Y_\mathcal{P}$ which is a toric variety with respect to a subtorus of the torus $T$, arises from a toric ideal in $R$, i.e., the kernel 
of some~monomial~map 
\[\phi:\CC[z_1,\dots,z_f]\rightarrow \CC[p_1,\dots,p_k] \]
which is concurrent with the $\Cl(Y_\MP)$-grading of $R$. By this we mean that if $\mathbf{z}^u - \mathbf{z}^v \in \ker(\phi)$ then $\mathbf{z}^u$ and $\mathbf{z}^v$ have the same $\Cl(Y_\MP)$-degree, i.e., $\mathbf D \cdot u = \mathbf D \cdot v$. Consider the lattice $\ZZ^k$ associated to the ring $\CC[p_1,\dots,p_k]$ and define \begin{equation}\label{eq:phi_hat_map}
\hat\phi: \ZZ^f \to \ZZ^k
\end{equation}
to be the map of lattices corresponding to $\phi$, i.e., $\phi(\mathbf{z}^u) = \mathbf{p}^v$ if and only if $\hat\phi(u) = v$. By an abuse of notation, we will use $\hat\phi$ to denote the map of spaces $\RR^f \rightarrow \RR^k$ that naturally extends $\hat\phi$.
Given such a monomial map $\phi$, we construct a closed toric subvariety $X_\phi\subseteq Y_\mathcal{P}$ which is the vanishing locus in $Y_\mathcal{P}$ of the kernel $\ker(\phi)$. We will now show that $X_\phi$ is {the} toric variety arising from the polytope $\mathcal{P}_\phi = \hat{\phi}(\mathcal{P})$ viewed as a polytope with respect to the lattice $\hat{\phi}(M)$.

\begin{restatable}{prop}{toricsubvarieties}
\label{prop: toric subvarieties description}
Let $Y_\mathcal{P}$ be a smooth projective toric variety with lattice polytope $\mathcal{P}$ and total coordinate ring $R=\CC[z_1,\dots,z_f]$. Fix a monomial map $\phi:\CC[z_1,\dots,z_f]\to \CC[p_1,\dots,p_k]$ which is concurrent with the {$\Cl(Y_\MP)$}-grading of $R$. Let $\hat{\phi}$ be the map of lattices defined above.
Then $X_\phi$ is given by the toric variety $X_{\mathcal{P}_\phi}$ constructed from the polytope $\mathcal{P}_\phi=\hat{\phi}(\mathcal{P})$.
\end{restatable}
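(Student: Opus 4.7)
The plan is to verify that $X_\phi$ and $X_{\mathcal{P}_\phi}$ have isomorphic homogeneous coordinate rings, and then conclude via the Proj construction that they coincide as toric varieties.

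First I would observe that, since $\phi$ is concurrent with the $\Cl(Y_\mathcal{P})$-grading, the kernel $\ker(\phi)$ is a homogeneous ideal of $R$; it is furthermore prime and binomial, being the kernel of a monomial map between polynomial rings, so by the preceding lemma it determines an integral closed subvariety $X_\phi \subseteq Y_\mathcal{P}$ that is toric with respect to a subtorus.

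Next, I would exploit the description of the coordinate ring of $Y_\mathcal{P}$ established in \Cref{sec:total}. With respect to the very ample class $\underline{d} = \mathbf{D}\cdot \rho$, the homogeneous coordinate ring of $Y_\mathcal{P}$ is $S_\mathcal{P} = \bigoplus_{k\ge 0} R_{k\underline d}$, and via the translation $\iota_k\colon x\mapsto \mathbf{B}x + k\rho$ together with the short exact sequence \Cref{picard seq}, the monomials spanning $R_{k\underline d}$ are in bijection with the lattice points $k\mathcal{P}\cap M$. The homogeneous coordinate ring of $X_\phi$ with respect to this grading is then $\phi(S_\mathcal{P}) = \bigoplus_k \phi(R_{k\underline d})$, whose degree-$k$ piece is the $\CC$-span of the monomials $p^{\hat\phi(\iota_k(x))}$ for $x\in k\mathcal{P}\cap M$.

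Now I would invoke normality of $\mathcal{P}$: every $x \in k\mathcal{P}\cap M$ decomposes as a sum of $k$ lattice points of $\mathcal{P}\cap M$, and hence $\phi(S_\mathcal{P})$ is generated as a $\CC$-algebra by its degree-$1$ part $\bigl\{p^{\hat\phi(\iota_1(x))} : x\in \mathcal{P}\cap M\bigr\}$. By definition the polytope $\mathcal{P}_\phi = \hat\phi(\iota_1(\mathcal{P}))$, viewed with respect to the image lattice $\hat\phi(\iota_1(M))$, has lattice points exactly $\hat\phi(\iota_1(\mathcal{P}\cap M))$; and $X_{\mathcal{P}_\phi}$ is the Zariski closure of the torus orbit under the monomial map indexed by these lattice points. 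Matching the two graded $\CC$-algebras and taking Proj yields $X_\phi = X_{\mathcal{P}_\phi}$.

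The main obstacle I expect is bookkeeping around the lattice identifications: tracing how $\iota_k$ and $\hat\phi$ interact with the character lattice $M$, its embedding $\mathbf{B}(M)\subseteq \ZZ^f$, and the image lattice $\hat\phi(\mathbf{B}(M))$, so that smoothness of $Y_\mathcal{P}$ and normality of $\mathcal{P}$ correctly translate into the claim that $\phi(S_\mathcal{P})$ is generated in degree one by the monomials attached to the lattice points of $\mathcal{P}_\phi$.
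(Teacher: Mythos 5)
Your overall strategy -- identify the $\Cl(Y_\MP)$-graded coordinate ring of $X_\phi$ with that of $X_{\mathcal{P}_\phi}$ and apply $\Proj$ -- is the same as the paper's, and your first two steps (homogeneity of $\ker(\phi)$, and $\bigoplus_k \phi(R_{k\underline d})$ as the coordinate ring of $X_\phi$) match the paper's argument. The gap is in the final identification. You assert that, \emph{by definition}, the lattice points of $\mathcal{P}_\phi$ are exactly $\hat\phi(\iota_1(\mathcal{P}\cap M))$. This is not a definition but the crux of the proposition, and it is precisely where the concurrency hypothesis must be used: $\hat\phi$ is in general far from injective (e.g.\ for $\Fl_3$ the $4$-dimensional polytope $\Delta_2\times\Delta_2$ maps onto a $3$-dimensional matching field polytope), so a priori a lattice point of $\hat\phi(\mathcal{P})$ could be the image only of non-lattice points of $\mathcal{P}$. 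Moreover you take the lattice of $\mathcal{P}_\phi$ to be $\hat\phi(\iota_1(M))$, whereas the proposition (see Step~\ref{item: x_phi} of \Cref{proc: toric subvariety}) is about the lattice $\ZZ S=\hat\phi(\ZZ^f)$, which is generally strictly larger than $\hat\phi(\mathbf{B}M)$; the statement you need is that every point of $k\mathcal{P}_\phi\cap\ZZ S$ is $\hat\phi(u)$ for some $u\in\NN^f$ with $\mathbf{D}u=k\underline d$, and this is exactly the step the paper derives from concurrency.

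A second, related problem: even granting generation in degree one, you conclude by matching $\phi(S_\mathcal{P})$ with the coordinate ring of ``the Zariski closure of the torus orbit'' on the degree-one lattice points of $\mathcal{P}_\phi$. But $X_{\mathcal{P}_\phi}$ in the statement is $\Proj\bigl(\CC[C(\mathcal{P}_\phi)\cap(\ZZ S\times\ZZ)]\bigr)$, i.e.\ the $\Proj$ of the \emph{full} semigroup algebra; identifying that with the subalgebra generated in degree one requires knowing that every lattice point of $k\mathcal{P}_\phi$ is a sum of $k$ lattice points of $\mathcal{P}_\phi$ coming from $\mathcal{P}$, i.e.\ normality of $\mathcal{P}_\phi$ together with surjectivity onto its lattice points in every degree. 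Neither is available at this stage (normality of the matching field polytopes is only proved later, in \Cref{prop: lattice points of flag polytopes}). The paper avoids both issues by comparing the two graded rings degree by degree, using concurrency in each degree $k$, rather than reducing to degree one; your use of normality of $\mathcal{P}$ is therefore a detour that does not close the argument. To repair the proof, replace the ``by definition'' step with an explicit argument that concurrency forces $(k\mathcal{P}_\phi)\cap\ZZ S=\hat\phi\bigl(\NN^f\cap\mathbf{D}^{-1}(k\underline d)\bigr)$ for every $k$, and then the degree-one reduction becomes unnecessary.
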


\begin{proof}
We begin by showing that $X_\phi$ is given by $\Proj \left(\bigoplus_{k=0}^{\infty} (\Ima \phi)_{k\underline d}\right)$.
Consider the natural surjection $R\twoheadrightarrow R/\ker(\phi) \cong \Ima(\phi)$. Since $\phi$ is a monomial map, we have an isomorphism $\Ima(\phi) \cong \CC[\NN S]$ where $S=\{s_1,\dots,s_f\}\subset \RR^k$ is the image of the set of unit vectors in $\RR^f$. By assumption, $\phi$ is concurrent with the {$\Cl(Y_\MP)$}-grading of $R$ and so $\ker(\phi)$ is {$\Cl(Y_\MP)$}-homogeneous. We have that $R\twoheadrightarrow R/\ker(\phi)$ restricts to a surjection on each {$\Cl(Y_\MP)$}-graded piece. In particular, we have that
\[
    \bigoplus_{k=0}^{\infty}(R_{k\underline d})\twoheadrightarrow 
    \bigoplus_{k=0}^{\infty}(R_{k\underline d})/(\ker \phi)_{k\underline d}\cong
    \bigoplus_{k=0}^{\infty} (\Ima \phi)_{k\underline d}. 
\]
By \cite[Proposition~13.8]{Goertz2020}, the surjection above induces a closed immersion 
\[
\Proj \left(\bigoplus_{k=0}^{\infty} (\Ima \phi)_{k\underline d} \right)
\rightarrow 
\Proj \left(\bigoplus_{k=0}^{\infty}(R_{k\underline d})\right)
=Y_\mathcal{P}.
\] 
Next, we show that $\Proj \left(\bigoplus_{k=0}^{\infty} (\Ima \phi)_{k\underline d}\right)$ coincides with toric variety $X_{\mathcal{P}_\phi}$ associated to the polytope $\mathcal{P}_\phi := \hat\phi(\mathcal{P})$. By the construction of  $X_{\mathcal{P}_\phi}$, we have 
\[
X_{\mathcal{P}_\phi}=\Proj\left( \CC[C(\mathcal{P}_\phi)\cap (\ZZ S \times \ZZ)]\right)=
\Proj \left(\CC\left[\bigoplus_{k=0}^{\infty} (k\mathcal{P}_\phi) \cap \ZZ S\right]\right).
\]  
Since $\phi$ is concurrent with the {$\Cl(Y_\MP)$}-grading of $R$, any lattice point of $k\mathcal{P}_\phi$ corresponds to the image under $\phi$ of a $k\underline d$-graded monomial. It follows that 
\[
\CC\left[\bigoplus_{k=0}^{\infty} (k\mathcal{P}_\phi) \cap \ZZ S\right]\cong \bigoplus_{k=0}^{\infty} (\Ima \phi)_{k\underline d}.
\]
We conclude the proof by taking $\Proj$ of the above.
\end{proof}

In the following procedure, we summarise  the main discussion of this subsection. That is, for any projective toric variety $Y_\mathcal{P}$ associated to a polytope $\mathcal{P}$, we describe the polytope associated to a toric subvariety $X_\phi$ of $Y_\mathcal{P}$, where $X_\phi$ is the vanishing locus of a monomial map $\phi$.

\begin{procedure}\label{proc: toric subvariety}
Let $Y_\mathcal{P}$ be a smooth toric variety associated to a full-dimensional lattice polytope $\mathcal{P}\subset M_\RR$ with $f$ 
facets. Consider the lattice $\ZZ^f$ and let $R=\CC[\NN^f]$ be the total coordinate ring of $Y_\mathcal{P}$. Fix a monomial map $ \phi: R \to \CC[p_1,\dots,p_k]$.
The toric subvariety $X_\phi$ of $Y_\mathcal{P}$ can be constructed through the following procedure:
\begin{enumerate}
    \item\label{item: h-description} Compute the H-description of $\mathcal{P}$. That is, calculate the matrix $\mathbf B: M \to \ZZ^f$ and vector $\rho \in \RR^f$ so that: the rows of $\mathbf B$ are primitive vectors; and $\mathbf B\cdot x + \rho \geq \underline 0$ if and only if $x \in \mathcal{P}$. Identify $\mathcal{P}$ with $\{\mathbf B \cdot x + \rho \mid x \in \mathcal{P} \} \subseteq \RR^f$.
    
    \item\label{item: grading matrix} Calculate the grading matrix $\mathbf D$ of $R$ by completing the short exact sequence 
    \[
    \begin{tikzcd}
    0 \arrow[r] & M \arrow[r,"\mathbf B"] &\ZZ^f \arrow[r,"\mathbf D"] & {\Cl(Y_\MP)}\arrow[r] &0.
    \end{tikzcd}
    \]
    Define $\underline d = \mathbf D\cdot \rho \in {\Cl(Y_\MP)}$ and check that $\ker(\phi)$ is homogeneous with respect to the {$\Cl(Y_\MP)$}-grading.
    
    \item\label{item: x_phi} Consider $X_\phi =\Proj\left(\bigoplus_{k=0}^{\infty} (\Ima \phi)_{k\underline d}\right)$. Let $\hat\phi : \ZZ^f \rightarrow \ZZ^k$ be the lattice map corresponding to $\phi$. The polytope of $X_\phi$ is given by $\mathcal{P}_{\phi} := \hat{\phi}(\mathcal{P})$. The polytope $\mathcal{P}_\phi$ is a lattice polytope with respect to the lattice $\ZZ S$ where $S$ is the image of the set of unit vectors in $\RR^f$ under $\hat\phi$.
\end{enumerate}
\end{procedure}

\subsubsection{Embeddings into projective space}
To conclude our preliminary discussion, we note that some readers may be more familiar with a description of $X_\phi\subseteq Y_\mathcal{P} \subset \PP^{s-1}$ as a closed toric subvariety of some high-dimensional projective space. We now briefly confirm the description of the lattice polytope associated to $X_\phi$ described in \Cref{proc: toric subvariety} in this setting.

Let $f$ be the number of facets of $\mathcal{P}$. Recall that the toric variety $Y_\mathcal{P}$ embeds as a subvariety of $\PP^{s-1}$ where $s$ is the number of lattice points of $\mathcal{P}$. By this description, we have $Y_\mathcal{P}=\Proj\left( \CC[t_1,\dots, t_s]/\ker(\psi) \right)$, where $\psi$ is a monomial map
\[
    \psi: \CC[t_1,\dots, t_s] \rightarrow \CC[z_1, \dots, z_f], \quad t_i \mapsto \mathbf{z}^{v_i}
\]
and $v_i\in M$ are the lattice points of $\mathcal{P}$. To embed $X_\phi$ into $\PP^{s-1}$, we compose $\psi$ with $\phi$. The ideal $\ker(\phi \circ \psi)$ defines $X_\phi$ as a subvariety of $\PP^{s-1}$. Explicitly, by the ring isomorphism theorems we have
\[
\CC[t_1,\dots, t_s]/\ker(\phi \circ \psi)\cong \Ima(\psi) / \Ima( \psi|_{\ker(\phi \circ \psi)}).
\] 
Finally, $\Ima(\psi) /\Ima( \psi)|_{\ker(\phi \circ \psi)}$ is isomorphic to $
\bigoplus_{k=0}^{\infty}R_{k\underline d}/(\ker(\phi)\cap \bigoplus_{k=0}^{\infty}R_{k\underline d})\cong \bigoplus_{k=0}^{\infty} \CC[\NN S]_{k\underline d}$. So, by applying $\Proj$ to the above, we see that this description agrees with the one given in \Cref{proc: toric subvariety}.

\subsection{Matching fields} \label{sec: matching fields}

\begin{definition}\label{def coherent matching field}
A \textit{matching field for $\mathrm{Gr}(k,n)$} is a map $\Lambda: \binom{[n]}{k}\to S_k.$
A \textit{matching field for {{$\Fl({K};n)$}}} is a map
\[
    {\Lambda: \{ I \mid \varnothing \subsetneq I \subsetneq [n], |I|\in {K}\} \to \bigsqcup_{k \in {K}} S_k}
\]
such that for each $k \in [n-1]$ the restriction of $\Lambda$ to $\binom{[n]}{k}$ is a matching field for $\Gr(k, n)$. A matching field $\Lambda$ is \textit{coherent} if there exists a matrix $M \in \RR^{(n-1)\times n}$ such that for each subset $I$ in the domain of $\Lambda$ with $I=\{i_1< \dots <i_k\}$ we have 
\[
    \Lambda(I)=\argmin_{\sigma \in  S_k}\sum\limits_{a=1}^{k} M_{a,i_{\sigma(a)}}
\]
and the minimum is attained at a unique $\sigma \in S_k$. Each coherent matching field $\Lambda$ for {$\Fl({K},n)$} gives rise to a monomial map
\begin{equation}\label{eq: matching field monomial map}
    \phi_\Lambda: R_\MP \rightarrow \CC[x_{i,j} \mid \ i\in [n-1], j \in [n]], \quad 
p_{\{j_1,\dots,j_k\}} \mapsto \sgn(\Lambda(I))\prod_{i=1}^k x_{i,j_{\Lambda(I)(i)}}.
\end{equation}
\end{definition}
In the next subsection, we will study the polytopes and toric varieties associated to matching fields.

\subsection{Multiprojective toric varieties from matching field ideals}\label{recipe}

In this section, we will apply \Cref{proc: toric subvariety} to the setting of toric degenerations of partial flag varieties $\Fl({K};n)$ from matching fields. We begin by fixing our notation and defining the toric varieties that will appear throughout the rest of the paper. For simplicity, we will work with the full flag variety $\Fl_n$ however the same method may be applied to any partial flag variety $\Fl({K};n)$.

\medskip

\noindent \textbf{Notation.}
For each $k\in [n]$, {we use $\binomial{n}{k}$ to denote the number of $k$-subsets of $[n]$}. For each $d \in \NN$, we let $\Delta_d \subseteq \RR^{d}$ be the $d$-dimensional simplex given by the convex hull of the standard basis vectors of $\RR^{d}$ together with the origin. We define the polytope $\mathcal{P} = \Delta_{\binomial{n}{1}-1} \times \cdots \times \Delta_{\binomial{n}{n-1} - 1}$ to be the product of simplicies and define $Y_{\mathcal{P}}$ to be the toric variety associated to $\mathcal{P}$. In the case of the partial flag variety $\Fl({K};n)$ where {$K = \{k_1 < \dots < k_\ell \}$}, we consider instead the polytope $\mathcal{P}^{{K}} = \Delta_{\binomial{n}{{k_1}} - 1} \times \dots \times \Delta_{\binomial{n}{{k_\ell}}-1}$.

\medskip

\noindent{\bf Product of polytopes.}
Given two polytopes $\mathcal{P}$ and $\mathcal{Q}$ with $s$ and $t$ facets respectively, their product $\mathcal{P} \times \mathcal{Q}$ has $s + t$ facets. Explicitly, if $F_1, \dots, F_s$ are the facets of $\mathcal{P}$ and $G_1, \dots, G_t$ are the facets of $\mathcal{Q}$, then the facets of $\mathcal{P} \times \mathcal{Q}$ are given by $F_i \times \mathcal{Q}$ along with $\mathcal{P} \times G_j$ where $i \in [s]$ and $j \in [t]$. So, each facet of $\mathcal{P}$ and $\mathcal{Q}$ gives rise to a facet of $\mathcal{P} \times \mathcal{Q}$.

\medskip

Given a polytope $\mathcal{P}$, recall from \Cref{def: total coordinate ring} that the total coordinate ring of $Y_\mathcal{P}$, which we write as $R_\mathcal{P}$, is a polynomial ring that has one variable for each facet of $\mathcal{P}$.
For each $k \in [n-1]$ the simplex $\Delta_{\binomial{n}{k} - 1}$ has $\binomial{n}{k}$ facets, so we may write
\[
R_{\Delta_{\binomial{n}{k}- 1}} =
\CC\left[p_I \mid I\subseteq [n], \ |I| = k 
\right] 
\text{ and }
R_{\mathcal{P}} = 
R_{\Delta_{\binomial{n}{1}-1}} \otimes_\CC \cdots \otimes_\CC R_{\Delta_{\binomial{n}{n-1}-1}} = 
\CC\left[p_I  \mid \varnothing \subsetneq I \subsetneq [n] \right]. 
\]
Each polynomial ring $R_{\Delta_{\binomial{n}{k}- 1}}$ is the homogeneous coordinate ring of $\PP^{\binomial{n}{k} - 1}$. Recall that the Pl\"{u}cker embedding $\Fl_n \hookrightarrow \PP^{\binomial{n}{1}-1}\times\dots\times \PP^{\binomial{n}{n-1}-1}$ is defined by the ring map
\begin{equation}\label{eq: flag plucker ring map}
\phi: R_\MP \to \CC[x_{i,j}|\ i\in [n-1], j \in [n]], \quad 
    p_I \mapsto \det(X_I)
\end{equation}
where $X= (x_{i,j})$ is an $(n-1) \times n$ matrix of variables and for each subset $I \subset [n]$ we write $X_I$ for the submatrix of $X$ whose columns are those indexed by $I$ and whose rows are indexed by $1, \dots, |I|$. A matching field can be thought of as a modification of $\phi$ to a monomial map which sends each Pl\"ucker variable $p_I$ to a single term of $\det(X_I)$. The kernel of this monomial map, as in \eqref{def coherent matching field} gives rise to a toric variety which, under certain conditions, is a toric degeneration of the flag variety.

\medskip

We proceed to apply \Cref{proc: toric subvariety} to the toric variety $Y_\MP$ with total coordinate ring $R_\MP$ together with the monomial map $\phi_\Lambda$ in \eqref{def coherent matching field} for some coherent matching field $\Lambda$.

\medskip

\noindent \textbf{Step~\ref{item: h-description}.} Let us compute the H-description of $\MP$. For each $k \in \NN$ we write $\mathbf I_k$ for the $k \times k$ identity matrix and $\mathbf{1}_k$ for the column vector of all ones. We define the $(k+1) \times k$ matrix $\mathbf{J}_k = [\mathbf{I}_k  \mid -\textbf{1}_k]^T$ given by the identity matrix with a row of $-1$'s below it. We define the column vector $\rho_{k+1}=(0, \dots,0,1)^T \in \RR^{k+1}$. The simplex $\Delta_k$ is defined to be the convex hull of the standard basis vectors in $\RR^k$ together with the origin. Equivalently, we have that 
\[
\Delta_k = \{ (x_1, \dots, x_k) \in \RR^k \colon x_i \ge 0 \text{ for all $i$ and } x_1 + \cdots + x_k \le 1 \}.
\]
Therefore the H-description of $\Delta_k$ is given by $\Delta_k = \{x \in \RR^k \colon \mathbf J_k \cdot x \ge -\rho_{k+1} \}$. Since $\mathcal{P}$ is the direct product of the simplices $\Delta_{\binomial{n}{1} - 1}, \dots, \Delta_{\binomial{n}{n-1} - 1}$, we define the matrix $\mathbf B$ to be the direct sum of $\mathbf J_{\binomial{n}{1} - 1}, \dots, \mathbf J_{\binomial{n}{n-1} - 1}$ which is given by
\[
\mathbf B = 
\begin{bmatrix}
\mathbf{J}_{\binomial{n}{1}-1}  & 0                   & \cdots & 0      \cr
0                   & \mathbf{J}_{\binomial{n}{2}-1}  & \cdots & 0      \cr
\vdots              & \vdots              & \ddots & \vdots \cr
0                   & 0                   & \cdots & \mathbf{J}_{\binomial{n}{n-1}-1} \cr
\end{bmatrix} 
\in \ZZ^{(\binomial{n}{1}+\dots+\binomial{n}{n-1}) \times (\binomial{n}{1}-1+\dots+\binomial{n}{n-1}-1)}.
\] 
Similarly, we define the column vector
$\rho = [\rho_{\binomial{n}{1}}^T|\dots|\rho_{\binomial{n}{n-1}}^T]^T \in \ZZ^{\binomial{n}{1} + \cdots + \binomial{n}{n-1}}$. Hence, by construction we have that the H-description of $\mathcal{P}$ is $\{x \in \RR^{\binomial{n}{1} - 1 + \cdots + \binomial{n}{n-1}-1} \colon \mathbf B \cdot x \ge - \rho \}$.

\medskip \noindent \textbf{Step~\ref{item: grading matrix}.} Let us compute the grading matrix $\mathbf D$, which is given by the cokernel of $\mathbf{B}$. Observe that for each $k \in \NN$, the cokernel of $\mathbf J_k$ is $\ZZ^{k+1} / \Ima(\mathbf J_k)$ which we identify with $\ZZ$ by sending $(x_1, \dots, x_{k+1}) \in \ZZ^{k+1}$ to $x_1 + \dots + x_{k+1} \in \ZZ$. We obtain a short exact sequence given by $\ZZ^k \rightarrow \ZZ^{k+1} \rightarrow \ZZ$ where the left-hand map is given by $\mathbf J_k$ and the right-hand map is the all-ones matrix $\mathbf 1_{k+1}^T$. So, we define
\[
\mathbf D = 
\begin{bmatrix}
\mathbf{1}_{\binomial{n}{1}}^T     & 0 & \cdots & 0 \cr
0 & \mathbf{1}_{\binomial{n}{2}}^T  & \cdots & 0 \cr
\vdots & \vdots & \ddots & \vdots \cr
0  & 0 & \cdots & \mathbf{1}_{\binomial{n}{n-1}}^T \cr
\end{bmatrix}\in \ZZ^{(n-1) \times (\binomial{n}{1} + \dots + \binomial{n}{n-1})}.
\]
Hence, we obtain a short exact sequence $\ZZ^{\binomial{n}{1} - 1 + \cdots + \binomial{n}{n-1} - 1} \to \ZZ^{\binomial{n}{1} + \cdots + \binomial{n}{n-1}} \to \ZZ^{n-1}$ with maps given by the matrices $\mathbf B$ and $\mathbf D$. We also note that $\underline d := \mathbf D \cdot \rho = \mathbf{1}_{n-1}\in \ZZ^{n-1}=: {\Cl(Y_\MP)}$.

Explicitly, the grading induced by $\mathbf D$ on the total coordinate ring $R_\mathcal{P} = \CC[p_I : \varnothing \subsetneq I \subsetneq [n]]$ is given by  $\deg(p_I) = e_{|I|} \in \ZZ^{n-1}$ for each subset $I$, where $e_1, \dots, e_{n-1}$ are the standard unit vectors in $\ZZ^{n-1}$. {We also define the $\Cl(Y_\MP)$-grading on the ring $\CC[x_{i,j}]$ by {$\deg(x_{i,j}) = e_i - e_{i-1}$ if $i > 1$ and $\deg(x_{1,j}) = e_1$, for all $j \in [n]$}.} We will now show that the matching field ideal $\ker(\phi_\Lambda)$ 
is homogeneous with respect to the grading given~by~$\mathbf D$.

\begin{lemma}\label{lemma: grading}
The monomial map $\phi_\Lambda$ is {concurrent} with respect to the {$\Cl(Y_\MP)$}-grading of $R_\MP$. 
\end{lemma}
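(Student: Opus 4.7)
The plan is to verify the lemma by a direct degree computation: show that $\phi_\Lambda$ sends every generator $p_I$ to an element of the matching $\Cl(Y_\MP)$-degree in $\CC[x_{i,j}]$, and then deduce concurrence from the fact that a graded ring map automatically has a homogeneous kernel.

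\textbf{Step 1 (the key telescoping computation).} I would fix $I = \{j_1 < \cdots < j_k\} \subsetneq [n]$ and let $\sigma = \Lambda(I) \in S_k$. By definition,
\[
\phi_\Lambda(p_I) \;=\; \sgn(\sigma)\prod_{i=1}^{k} x_{i,j_{\sigma(i)}}.
\]
Using the prescribed grading $\deg(x_{1,j}) = e_1$ and $\deg(x_{i,j}) = e_i - e_{i-1}$ for $i>1$, and noting that the column index $j_{\sigma(i)}$ plays no role in the degree, I get
\[
\deg\bigl(\phi_\Lambda(p_I)\bigr) \;=\; e_1 + \sum_{i=2}^{k}(e_i - e_{i-1}) \;=\; e_k \;=\; e_{|I|} \;=\; \deg(p_I),
\]
a telescoping sum independent of both $I$ and the permutation $\sigma$.

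\textbf{Step 2 (extend multiplicatively and conclude).} Since $\phi_\Lambda$ is a ring homomorphism, the computation in Step~1 extends to arbitrary monomials: for any $u \in \NN^{\binomial{n}{1} + \cdots + \binomial{n}{n-1}}$ indexed by the subsets $\varnothing \subsetneq I \subsetneq [n]$, the element $\phi_\Lambda(\mathbf{p}^u) = \prod_I \phi_\Lambda(p_I)^{u_I}$ is (up to sign) a monomial in $\CC[x_{i,j}]$ of $\Cl(Y_\MP)$-degree equal to $\sum_I u_I \, e_{|I|} = \deg(\mathbf{p}^u)$. In other words, $\phi_\Lambda$ is a homomorphism of $\Cl(Y_\MP)$-graded rings.

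\textbf{Step 3 (from gradedness to concurrence).} Suppose $\mathbf{p}^u - \mathbf{p}^v \in \ker(\phi_\Lambda)$. Applying $\phi_\Lambda$ gives $\phi_\Lambda(\mathbf{p}^u) = \phi_\Lambda(\mathbf{p}^v)$ as (signed) monomials in $\CC[x_{i,j}]$, hence the two sides have the same $\Cl(Y_\MP)$-degree in the target. By Step~2, this degree equals $\deg(\mathbf{p}^u)$ on the left and $\deg(\mathbf{p}^v)$ on the right, so $\deg(\mathbf{p}^u) = \deg(\mathbf{p}^v)$, which is precisely the concurrence condition required by Step~\ref{item: grading matrix} of \Cref{proc: toric subvariety}.

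There is essentially no obstacle here; the only thing to be careful about is the choice of grading on $\CC[x_{i,j}]$, which is designed precisely so that the column sums of the exponent matrix of $\phi_\Lambda(p_I)$ telescope to $e_{|I|}$. One minor remark: the image monomial depends on $\sigma = \Lambda(I)$ only through which column of $X$ is used in each row, but the degree recipe $\deg(x_{i,j}) = e_i - e_{i-1}$ depends only on the row $i$, which guarantees independence of $\sigma$ and hence that every choice of matching field yields a graded map.
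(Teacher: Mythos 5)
Your proof is correct and follows the same route as the paper: the paper's argument is exactly your Step~1 telescoping computation $\deg(\phi_\Lambda(p_I)) = (e_k - e_{k-1}) + \cdots + (e_2 - e_1) + e_1 = e_k = \deg(p_I)$, with the multiplicative extension and the passage to homogeneity of the kernel left implicit. Your Steps~2 and~3 simply spell out those implicit details, so there is nothing to change.
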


\begin{proof}
The result follows immediately from the fact that the grading on the total coordinate ring $R_\mathcal{P}$ is induced by the {$\Cl(Y_\MP)$}-grading on $\CC[x_{i,j}]$. Explicitly, if $I \subseteq [n]$ is a $k$-subset such that $\phi_\Lambda(p_I) = cx_{1,i_1}x_{2,i_2} \dots x_{k,i_k}$ for some $i_j \in [n]$ and $c \in \{+1, -1 \}$, then we have that
\[
\deg(\phi_\Lambda(p_I)) =
\deg(x_{k,i_k}) + \dots + \deg(x_{2,i_2}) + \deg(x_{1,i_1}) =
(e_k - e_{k-1}) + \dots + (e_2-e_1) + (e_1) =
e_k = \deg(p_I). \qedhere
\]
\end{proof}

\medskip \noindent \textbf{Step~\ref{item: x_phi}.} For the final step of \Cref{proc: toric subvariety}, we will identify $\mathcal{P}$ with its image under the map $x \mapsto \mathbf B \cdot x + \rho$ which lives in $\RR^{\binomial{n}{1} + \dots + \binomial{n}{n-1}}$. Consider the lattice $\ZZ^{n(n-1)}$ associated to the polynomial ring $\CC[x_{i,j}]$.
The monomial map $\phi_\Lambda$ gives rise to a lattice map $\hat \phi_\Lambda : \ZZ^{\binomial{n}{1} + \dots + \binomial{n}{n-1}} \rightarrow \ZZ^{n(n-1)}$ which sends $\mathcal{P}$ to the polytope $\mathcal{P}_\Lambda$ of the toric variety associated to the monomial map $\phi_\Lambda$.

\begin{definition} \label{def: matching field polytope}
The \textit{matching field polytope} associated to the matching field $\Lambda$ is $\mathcal{P}_\Lambda = (\hat \phi_\Lambda \otimes_\ZZ \RR)(\MP)$.
\end{definition}

The matching field polytope of a partial flag variety is defined in a similar way. In order to distinguish between matching field polytopes for different varieties we write $\mathcal{P}_\Lambda^{{K}}$ for the matching field polytope associated to the partial flag variety $\Fl({K};n)$. {In particular $\mathcal{P}_\Lambda^{k}$ denotes the matching field polytope associated to $\Gr(k,n)$ and $\mathcal{P}_\Lambda^{[n]}$ the matching field polytope associated to $\Fl_n$.}

\begin{example}
Consider the flag variety $\Fl_3$ which embeds into the product $Y_\mathcal{P} = \PP^2\times \PP^2$. Note that $Y_\mathcal{P}$ is a toric variety whose polytope is the product of the two triangles $\mathcal{P} = \Delta_2\times \Delta_2 \subseteq \RR^4$. We define the matrices $\mathbf B$ and $\mathbf D$ and the vector $\rho$ as follows:
\[
\mathbf B = 
\begin{bmatrix}
1 & 0 & 0 & 0\\ 
0 & 1 & 0 & 0\\ 
-1 & -1 & 0 & 0\\ 
0 & 0 & 1 & 0\\ 
0 & 0 & 0 & 1\\ 
0 & 0 & -1 & -1
\end{bmatrix}, \quad
\rho =
\begin{bmatrix}
0 \\ 0 \\ 1 \\ 0 \\ 0 \\ 1
\end{bmatrix} \quad \text{and} \quad
\mathbf D =
\begin{bmatrix}
1 & 1 & 1 & 0 & 0 & 0 \\
0 & 0 & 0& 1& 1 & 1\\
\end{bmatrix}.
\]
The H-description of $\mathcal{P}$ is given by $\{x \in \RR^4 \colon \mathbf B \cdot x \ge \rho \}$. The grading matrix is given by $\mathbf D$ which gives rise to a $\ZZ^2$ grading on $R_\mathcal{P}$.
Let us consider the \textit{diagonal matching field} $\Lambda : \{ I \mid \varnothing \subsetneq I \subsetneq [3] \} \rightarrow S_1 \sqcup S_2$ which sends each subset $I$ to its respective identity permutation. The corresponding monomial map is given by
\[
\phi_\Lambda : \CC[p_1, p_2, p_3, p_{12}, p_{13}, p_{23}] \rightarrow \CC[x_{1,1}, \dots, x_{2,3}], \quad p_i \mapsto x_{1,i}, \quad p_{ij} \mapsto x_{1,i}x_{2,j}.
\]
The kernel of $\phi_\Lambda$ is a principal ideal generated by $p_1p_{23} - p_2p_{13}$. Observe that this binomial is homogeneous with respect to the grading induced by $\mathbf D$, in particular it has degree $(1,1) \in \ZZ^2$.

We identify the polytope $\mathcal{P}$ with its  image under the map $x \mapsto B \cdot x + \rho$ which is given by the convex hull of columns of the matrix
\[
V(\mathcal{P}) = 
\begin{bmatrix}
1 & 1 & 1 & 0 & 0 & 0 & 0 & 0 & 0 \\
0 & 0 & 0 & 1 & 1 & 1 & 0 & 0 & 0 \\
0 & 0 & 0 & 0 & 0 & 0 & 1 & 1 & 1 \\
1 & 0 & 0 & 1 & 0 & 0 & 1 & 0 & 0 \\
0 & 1 & 0 & 0 & 1 & 0 & 0 & 1 & 0 \\
0 & 0 & 1 & 0 & 0 & 1 & 0 & 0 & 1
\end{bmatrix}.
\]
The rows of $V(\mathcal{P})$ are indexed by the variables of the total coordinate ring $p_1$, $p_2$, $p_3$, $p_{12}$, $p_{13}$ and $p_{23}$.
The columns are indexed by pairs of variables of different degrees, namely $(p_i, p_{jk})$ for some $i,j,k, \in [3]$.

Let us consider the map $\hat \phi_\Lambda : \ZZ^6 \rightarrow \ZZ^6$ which corresponds to the monomial map $\phi_\Lambda$. This map sends $\MP$ to the matching field polytope $\mathcal{P}_\Lambda$ and is given by the matrix
\[
\hat \phi_\Lambda = 
\begin{bmatrix}
1 & 0 & 0 & 1 & 1 & 0 \\
0 & 1 & 0 & 0 & 0 & 1 \\
0 & 0 & 1 & 0 & 0 & 0 \\
0 & 0 & 0 & 0 & 0 & 0 \\
0 & 0 & 0 & 1 & 0 & 0 \\
0 & 0 & 0 & 0 & 1 & 1
\end{bmatrix}
\text{ which sends } V(\mathcal{P}) \text { to } \hat\phi_\Lambda(V(\mathcal{P})) = 
\begin{bmatrix}
2 & 2 & 1 & 1 & 1 & 0 & 1 & 1 & 0 \\
0 & 0 & 1 & 1 & 1 & 2 & 0 & 0 & 1 \\
0 & 0 & 0 & 0 & 0 & 0 & 1 & 1 & 1 \\
0 & 0 & 0 & 0 & 0 & 0 & 0 & 0 & 0 \\
1 & 0 & 0 & 1 & 0 & 0 & 1 & 0 & 0 \\
0 & 1 & 1 & 0 & 1 & 1 & 0 & 1 & 1
\end{bmatrix}.
\]
The rows of the above matrices are indexed by the variables $x_{1,1}$, $x_{1,2}$, $x_{1,3}$, $x_{2,1}$, $x_{2,2}$ and $x_{2,3}$ respectively. The matching field polytope $\mathcal{P}_\Lambda$ is the convex hull of the columns of the matrix $\hat\phi_\Lambda(V(\mathcal{P}))$.  We may compute this polytope using the \textit{Polyhedra} package in \textit{Macaulay2} \cite{M2}, which shows that it is a $3$-dimensional polytope with seven vertices, eleven edges and six faces. %
\end{example}

\subsection{Newton--Okounkov bodies}
Matching field polytopes are examples of Newton-Okounkov bodies. Following the language in \cite{Kaveh2019a}, consider the total coordinate ring $R_\MP=\CC[p_I|\varnothing \subsetneq I\subsetneq [n]]$ of $\PP^{\binomial{n}{1} - 1}\times \dots \times \PP^{\binomial{n}{n-1} - 1}$ where $\binomial{n}{k}=\binom{n}{k}$ for each $k$. Fix a matching field $\Lambda$ which gives rise to a toric degeneration of $\Fl_n$. Recall the lattice map $\hat \phi_\Lambda$ described in the \ref{item: x_phi}rd step of \Cref{proc: toric subvariety}. After an arbitrary choice of linear ordering $\succ$ on $\ZZ S$, we can derive from $\Lambda$ a valuation:
\begin{align*}
    \widetilde{\mathfrak{v}}_\Lambda:\CC[p_I]\setminus \{0\}&\to \ZZ S,\\
    \sum_{\alpha\in \ZZ^{\binomial{n}{1} + \dots + \binomial{n}{n-1}}}c_\alpha z^\alpha &\mapsto \min\limits_{\succ}\{\hat\phi_\Lambda(\alpha)|c_\alpha \neq 0\}.
\end{align*}
In particular, the monomials $p^v$ in $\CC[p_I]$ are sent to $\hat\phi_\Lambda(v)$. So the image of $\widetilde{\mathfrak{v}}_\Lambda$ agrees with the image of $\hat \phi_\Lambda$. Let $A^{[n]}=\CC[p_I]/(\ker \phi)$ be the homogeneous coordinate ring of $\Fl_n$. We obtain a quasi-valuation $\mathfrak{v}_\Lambda$ on $A^{[n]}$ given by: 
\begin{align*}
\mathfrak{v}_\Lambda(\widetilde{f})=\max \{\widetilde{\mathfrak{v}}_\Lambda(f) \mid f\equiv \widetilde{f}\mod\ker \phi_\Lambda\}.
\end{align*} 
Along with $\mathfrak{v}_\Lambda$, comes the notion of a \textit{Khovanskii basis}, which is a set $\mathfrak{B}$ of algebra generators for $A^{[n]}$ so that $\mathfrak{v}_\Lambda(\mathfrak{B})$ generates the \textit{value semigroup} $S(A^{[n]},\mathfrak{v}_\Lambda)=\Ima \mathfrak{v}_\Lambda$. Since $\Lambda$ induces a toric degeneration of $\Fl_n$, it follows that $\mathfrak{v}_\Lambda$ is a valuation and (the image under the quotient map of) the Pl\"ucker variables form a Khovanskii basis for $A^{[n]}$. See \cite[Corollary~2]{bossinger2021full}. 
Furthermore, $\mathcal{P}^{[n]}_\Lambda$ is the \textit{Newton-Okounkov body} $\Delta(A^{[n]},\mathfrak{v}_\Lambda)$ of the pair $(A^{[n]},\mathfrak{v}_\Lambda)$. By \cite[Corollary~1]{bossinger2021full}, we have that $\Delta(A^{[n]},\mathfrak{v}_\Lambda)$ is a Minkowski sum of polytopes. Moreover, by \Cref{Minkowski}, we have that $\Delta(A^{[n]},\mathfrak{v}_\Lambda)$ is the Minkowski sum of the polytopes $\mathcal{P}^{k}_\Lambda$, which are themselves Newton-Okounkov bodies for the Grassmannian  $\mathcal{P}^{k}_\Lambda = \Delta(A^{k},\mathfrak{v}_\Lambda)$, where $A^{k}$ denotes the homogeneous coordinate ring of the Grassmannian $\mathrm{Gr}(k,n)$.

\subsection{Minkowski sums of matching field polytopes}\label{Mink}

Here, we show that a matching field polytope for $\Fl_n$ can be constructed as a Minkowski sum of matching field polytopes for the Grassmannians $\Gr(k,n)$ for all $k \in [n-1]$. Similarly to \Cref{recipe}, we work with the full flag variety for simplicity, however all constructions hold for partial flag varieties $\Fl({K};n)$.

\begin{notation}
The Pl\"ucker embedding of the flag variety, $\Fl_n\to \PP^{\binomial{n}{1} - 1}\times \dots \times \PP^{\binomial{n}{n-1}-1}$ factors through the natural embedding into a product of Grassmannians $\Fl_n\to \mathrm{Gr}(1,n)\times \dots \times \mathrm{Gr}(n-1,n)$. For each subset ${K} = \{k_1, \dots, k_t\} \subseteq [n-1]$ we write $\phi^{{K}}$ for the restriction of the map $\phi$ from \Cref{eq: flag plucker ring map}~to~the~subring 
\[
R_{\mathcal{P}^{{K}}} = 
R_{\Delta_{\binomial{n}{k_1} - 1}} \otimes_\CC \dots \otimes_\CC R_{\Delta_{\binomial{n}{k_t} - 1}} = \CC\left[p_I \mid I \subseteq [n], \ |I| \in {K}\right]. 
\]
The ring map $\phi^{{K}}$ defines the Pl\"ucker embedding of the partial flag variety $\Fl({K};n)$ into $\PP^{\binomial{n}{k_1} - 1} \times \dots \times \PP^{\binomial{n}{k_t} - 1}$. Similarly, for each matching field $\Lambda$ for $\Fl_n$, we define the monomial map $\phi^{{K}}_{\Lambda}$ to be the restriction of $\phi_\Lambda$ from \Cref{eq: matching field monomial map} to $R_{\mathcal{P}^{{K}}}$. We note that the construction of the matching field polytope for a full flag variety in \Cref{recipe} naturally restricts to the setting of partial flag varieties, where the polytope is
\[
\mathcal{P}_{\Lambda}^{{K}} = (\hat \phi_\Lambda^{{K}} \otimes_\ZZ \RR) (\mathcal{P}^{{K}}).
\]

In particular, if ${K} = \{k\}$ is a singleton then we write $\mathcal{P}_\Lambda^{k}$ for the matching field polytope associated to the Grassmannian matching field. For further treatment of matching field polytopes for the Grassmannian we refer the reader to \cite{clarke2022combinatorial}. In particular, {we recall} that the vertices of $\mathcal{P}_\Lambda^{k}$ have an explicit description given by
\[
v_{I, \Lambda} := 
\hat\phi_\Lambda^{k} (p_I) = 
\sum_{i = 1}^k e_{i,j_{\Lambda{(I)}(i)}} \in 
\ZZ^{k \times n}
\text{ for each } I = \{j_1 < \dots < j_k\} \in \binom{[n]}{k}.
\]

Throughout the rest of the paper, we will use the superscripts $[n]$, $k$ and ${K}$ to distinguish our matching fields, polytopes and maps{, respectively for $\Fl_n, \Gr(k,n)$ and $\Fl({K};n)$}. In particular, given a matching field $\Lambda$ for $\Fl_n$, we write $\Lambda^{{K}}$ for the restriction of $\Lambda$ to the subsets $\{I \subseteq [n] \mid |I| \in {K} \}$.
However, if the context is clear then we will remove the superscripts for ease of notation.
\end{notation}

We are now ready to prove the Minkowski sum property of toric subvarieties of (partial) flag varieties. 

\begin{proof}[{\bf Proof of \Cref{Minkowski}}]
Observe that the polytope $\mathcal{P}=\Delta_{\binomial{n}{1} - 1}\times \dots \times \Delta_{\binomial{n}{n-1} - 1} \subseteq \RR^{\binomial{n}{1} + \dots + \binomial{n}{n-1}}$ is a Minkowski sum of simplices 
\[
\mathcal{P} = \sum_{k = 1}^{n-1} 
\{0\}\times \dots \times \{0\} \times \Delta_{\binomial{n}{k} - 1}\times \{0\} \times \dots \times \{0\}.
\] 
Since linear maps commute with taking Minkowski sums, we have that the matching field polytope $\mathcal{P}_\Lambda^{[n]} = (\hat\phi_\Lambda \otimes_\ZZ \RR)(\mathcal{P})$ is the Minkowski sum of polytopes $(\hat\phi_\Lambda \otimes_\ZZ \RR)(\{0\}\times \dots \times \Delta_{\binomial{n}{k} - 1}\times \dots \times \{0\})$ for each $k \in [n-1]$. By definition, each summand is the matching field polytope $\mathcal{P}_\Lambda^{k} \subseteq \RR^{(n-1)\times n}$ associated to the Grassmannian embedded into a product of projective spaces $\Gr(k,n) \subseteq \PP^{\binomial{n}{1} - 1} \times \dots \times \PP^{\binomial{n}{n - 1} - 1}$.
\end{proof}

\begin{remark}\label{rmk: general minkowskii sum lambda P}
We note that \Cref{Minkowski} may be generalised as follows. For each $\lambda \in \NN^{n-1}$ define the polytope $\lambda \MP := \lambda_1\Delta_{\binomial{n}{1} - 1}\times \dots \times \lambda_{n-1}\Delta_{\binomial{n}{n-1} - 1}$. If $\lambda_i > 0$ for all $i$ then $\lambda\MP$ corresponds to a choice of a very ample line bundle on $\PP^{\binomial{n}{1} - 1}\times\dots\times \PP^{\binomial{n}{n-1} - 1}$ which defines a different embedding into projective space. Otherwise $\lambda\MP$ corresponds to a choice of a very ample line bundle on the subspace $\PP^{\binomial{n}{k_1} - 1}\times \dots \times \PP^{\binomial{n}{k_t} - 1}\subseteq \PP^{\binomial{n}{1} - 1}\times\dots\times \PP^{\binomial{n}{n-1} - 1}$, where ${K} = \{k_1<\dots<k_t\} \subseteq [n-1]$ indexes the nonzero entries of $\lambda$. 

Following \Cref{proc: toric subvariety}, the H-description of each scaled simplex $\lambda_k \Delta_{\binomial{n}{k} - 1}$ is given by $\{x \in \RR^{\binomial{n}{k} - 1} \mid \mathbf J_{\binomial{n}{k} - 1} \cdot x \ge -\rho_{\binomial{n}{k}}' \}$ where $\rho_{\binomial{n}{k}}' = (0, \dots, 0, \lambda_k)^T \in \ZZ^{\binomial{n}{k}}$. So, the H-description of $\lambda \mathcal{P}$ is given by the same matrix $\mathbf B$; hence the grading matrix is also given by the same matrix $\mathbf D$. Similarly to \Cref{recipe}, define $\rho' = [\rho_{\binomial{n}{1}}'^T | \dots | \rho_{\binomial{n}{n-1}}'^T]^T$. Notice that $\mathbf D$ sends the vector $\rho'$ to $\lambda$. 

The polytope $\lambda \mathcal{P}_\Lambda := (\hat\phi_\Lambda \otimes_\ZZ \RR)(\lambda\MP)$ is different from the matching field polytope $\mathcal{P}_\Lambda$. However the proof of \Cref{Minkowski} may be applied to show that $\lambda \mathcal{P}_\Lambda$ is a Minkowski sum of polytopes associated to Grassmannians. In this case, the summands are given by $(\hat\phi_{\Lambda} \otimes_\ZZ \RR)(\lambda_k \Delta_{\binomial{n}{k} - 1})$. In particular, if $\lambda$ is the indicator vector for the subset ${K} \subseteq [n-1]$ then we have shown that the matching field polytope $\mathcal{P}^{{K}}_\Lambda$ for the partial flag variety $\Fl({K};n)$ is the Minkowski sum of the polytopes $\mathcal{P}^{k}_\Lambda$ where $k \in {K}$.
\end{remark}

\section{Toric degenerations from matching fields}\label{sec: toric degen from matching fields}
In this section, we start by recalling the definition of tropicalizations as introduced in \cite[Section 3.2]{maclagan2021introduction}.
We also recall how a matching field gives rise to a toric degeneration of a (partial) flag variety. We then show that all matching field polytopes are normal, followed by a proof of \Cref{thm:toric degen partial flag}. 

Fix a {Laurent polynomial ring $R = \CC[x_1^{\pm 1}, \dots, x_n^{\pm 1}]$} on $n$ variables together with a \textit{weight vector} $w \in \RR^n$. For any Laurent polynomial $f = \sum_{{\bm u} \in \ZZ^n} a_{\bm u} \mathbf{x}^{\bm u}$, its \textit{initial form} with respect to $w$ is defined to be $\mathrm{in}_w(f) = \sum_{\bm u} a_{\bm u}\mathbf{x}^{\bm u}$ where the sum is taken over all ${\bm u} \in \ZZ^n$ such that ${\bm u} \cdot w$ has the minimum value. Given an ideal $I \subseteq R$, we define the \textit{initial ideal} with respect to $w$ to be $\mathrm{in}_{w}(I) = \langle \mathrm{in}_w(f) \mid f \in I \rangle$, that is the ideal generated by the initial terms of all polynomials in $I$. The \textit{tropicalization}  {(as a set)} is defined as $\Trop(I) = \{ w \in \RR^n \mid \mathrm{in}_{w}(I) \text{ contains no monomials}\}$. It inherits the structure of a polyhedral fan from the \textit{Gr\"obner fan} of $I$, which is defined as the complete polyhedral fan in $\RR^n$ where two weight vectors $w$ and $w'$ lie in the relative interior of the same cone of the Gr\"obner fan if and only if $\mathrm{in}_{w}(I) = \mathrm{in}_{w'}(I)$.
In particular, in order to take the tropicalization of the partial flag variety $\Fl(K;n)$, we need to consider the extension of the Pl\"ucker ideal to the Laurent polynomial ring. The corresponding variety is the intersection of the partial flag variety with the algebraic torus, which will be denoted by $\Fl(K;n)^\circ$.

We restrict ourselves to the complete flag variety $\Fl_n$ and note that all arguments apply to partial flag varieties and, in particular, Grassmannians. Consider the polynomial ring $\CC[\mathbf{p}]=\CC[p_I \mid I \subseteq [n]]$ and the Pl\"{u}cker ideal $\MI=\Ker(\phi)$ that defines $\Fl_n$ as a subvariety of multiprojective space. We mostly refer to the approach described in \cite{Miller2004, maclagan2021introduction}. Fix a coherent matching field $\Lambda$ for the flag variety $\Fl_n$ which is induced by the weight matrix $M_\Lambda$. The \textit{weight vector} $w_{\Lambda}\in \RR^{\binomial{n}{1} + \dots + \binomial{n}{n-1}}$ associated to $\Lambda$ and $M_\Lambda$ is given by
\[w_{\Lambda} =\left(\min\limits_{\sigma\in S_n}\left\{\sum_{a=1}^{|I|} (M_\Lambda)_{a,i_{\sigma(a)}}\right\}\right)_{\varnothing \subsetneq I\subsetneq [n]},
\]
{where $I=\{i_1< \dots < i_{|I|}\}$.} 

We use Gr\"obner degeneration to construct a flat family $\pi: \mathcal{F}\to \mathbb{A}^1$ over $\mathbb{A}^1$ such that the initial ideal $\mathrm{in}_{w_{\Lambda}}(\MI)$ is associated to the fiber over $\{0\}$. Explicitly, for each $t \in \mathbb A^1 \backslash \{0\}$ the fiber $\pi^{-1}(t)$ is the subvariety of multiprojective space defined by the ideal 
\[
\Bar{\MI_t}=
\left\langle 
t^{-\min_{\bm{u}}\{w_{\Lambda}\cdot \bm{u}\}}
f(t^{w_{\Lambda \cdot I_1}}p_{I_1},\dots,t^{w_{\Lambda \cdot I_d}}p_{I_d})
\mid
f=\sum a_{\bm u}\mathbf{p}^{\bm{u}}\in \MI
\right\rangle,
\]
where $d= \binomial{n}{1}+\dots+\binomial{n}{n-1}$ is the number of Pl\"ucker coordinates in $\CC[\mathbf{p}]$. 
If $\mathrm{in}_{w_{\Lambda}}(\MI)$ is toric, then it follows that $\Lambda$ produces a toric degeneration of $\Fl_n$. The question of whether $\mathrm{in}_{w_{\Lambda}}(\MI)$ is toric is treated in \cite[Chapter 11]{Sturmfels1996}. By \cite[Theorem~11.3]{Sturmfels1996}, we always have the inclusion 
\begin{equation}\label{eq: inclusion initial ideal}
    \mathrm{in}_{w_{\Lambda}}(\MI)\subseteq \MI_\Lambda
\end{equation} where $\MI_\Lambda$ is the toric ideal $\Ker(\phi_\Lambda)$. On the other hand, by \cite[Theorem 11.4]{Sturmfels1996}, the ideal $\mathrm{in}_{w_\Lambda}$ is toric if and only if the reverse inclusion holds.

\begin{remark}
    Suppose that $M$ and $M'$ are two weight matrices that induce the same matching field $\Lambda$. Equivalently, the initial term of each Pl\"ucker form with respect to either $M$ or $M'$ is the same. Note that every matching field that we consider throughout this paper produces a toric degeneration. Therefore $\mathrm{in}_{w}(\MI) = \MI_\Lambda = \mathrm{in}_{w'}(\MI)$, where $w$ and $w'$ are the weight vectors associated to $M$ and $M'$. In particular, $w$ and $w'$ lie in the same cone of the Gr\"obner fan of the Pl\"ucker ideal $\MI$. So, given a coherent matching field $\Lambda$, by a slight abuse of notation we write $M_\Lambda$ for a weight matrix that induces $\Lambda$ and we write $w_\Lambda$ for the weight vector induced by $M_\Lambda$. If a matching field does not give rise to a toric degeneration then we specify $M_\Lambda$ and $w_\Lambda$ explicitly.
\end{remark}

\begin{example} \label{Example: different weight matrices same toric degeneration}
    We give an example of two different weight matrices that induce the same matching field, hence they give same toric degeneration of $\Gr(2, 4)$. Note that for any weight matrix $M \in \RR^{2 \times 4}$, the matching field induced by $M$ is invariant under adding a constant to any column of $M$. So, we may always assume that the first row of $M$ is all zeros. Let $M_1$ and $M_2$ be as follows:
    \[
    M_1 = 
    \begin{bmatrix}
        0 & 0 & 0 & 0 \\
        3 & 2 & 1 & 0 
    \end{bmatrix}
    \quad \text{and} \quad
    M_2 = 
    \begin{bmatrix}
        0 & 0 & 0 & 0 \\
        9 & 5 & 4 & 0 
    \end{bmatrix}.
    \]
    In this case, we note that the induced matching field is well-defined because the entries in the second row of the weight matrices are distinct. The induced weights on the Pl\"ucker variables $p_{12}$, $p_{13}$, $p_{23}$, $p_{14}$, $p_{24}$, $p_{34}$ are given by
    \[
    w_1 = (2, 1, 1, 0, 0, 0) 
    \quad \text{and} \quad
    w_2 = (5, 4, 4, 0, 0, 0)
    \]
    respectively. The corresponding initial ideals 
    are: $\mathrm{in}_{w_1}(\MI) = \langle p_{13}p_{24} - p_{14}p_{23} \rangle = \mathrm{in}_{w_2}(\MI)$.
\end{example}

To prove the reverse inclusion {to (\ref{eq: inclusion initial ideal}),} we follow a similar strategy to \cite{clarke2022combinatorial} and consider the Hilbert function of $\mathrm{in}_{w_{\Lambda}}(\MI)$ and the Ehrhart function of the matching field polytope.

\begin{definition}\label{def: Ehrhart and normal}
Let $\mathcal{P}\subseteq M_\RR$ a lattice polytope. { We recall the following definitions: }
\begin{itemize}
    \item The \textit{Ehrhart function} $E_\mathcal{P}\colon\NN\to\NN$ sends $m\in \NN$ to the number of lattice points of the $m$th dilate $m\mathcal{P}$ of $\mathcal{P}$; that is $E_\mathcal{P}(m) = |m\mathcal{P} \cap M|$.
    \item  The lattice polytope $\mathcal{P}$ is \textit{normal} if for all $m\in \NN$ each lattice point of $m\mathcal{P}\cap M$ is a sum of $m$ lattice points of $\mathcal{P}$. That is, for all $\alpha\in m\mathcal{P}\cap M$ there exist $\alpha_1, \dots, \alpha_m \in \mathcal{P} \cap M$ such that $\alpha=\alpha_1+\dots+\alpha_m$.
\end{itemize}
\end{definition}

We will now show that all coherent matching field polytopes for all partial flag varieties are normal. Recall from \Cref{rmk: general minkowskii sum lambda P} that the polytope $\lambda \mathcal{P}$ is a product of dilated simplicies for each each $\lambda \in \NN^{n-1}$.

\begin{restatable}{prop}{latticeptsofflagpolytopes}
\label{prop: lattice points of flag polytopes}
Fix ${\lambda}=(\lambda_1,\dots,\lambda_{n-1})\in \NN^{n-1}$ and consider the polytope ${\lambda}\mathcal{P}_\Lambda:=(\hat{\phi}_\Lambda \otimes_\ZZ \RR) ({\lambda} \mathcal{P})$. Then any lattice point $\alpha\in {\lambda} \mathcal{P}_\Lambda$ can be written as 
\[
\alpha=\sum_{k=1}^{n-1}\sum_{j=1}^{\lambda_k} v_{{I_{(k,j)}},\Lambda}
\]
for some vertices $v_{{I_{k,j}},\Lambda}$ of $\mathcal{P}^{k}_\Lambda$. In particular, for any $\ell\in \NN$ and ${K= \{ k_1 < \dots <  k_\ell\}}$ the matching field polytopes $\mathcal{P}_\Lambda^{{K}}$ is normal, when considered as a lattice polytope with respect to the lattice $\ZZ S$.
\end{restatable}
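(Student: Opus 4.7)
The plan is to exhibit every lattice point of $\lambda\mathcal{P}_\Lambda$ as the image under $\hat\phi_\Lambda$ of a lattice point of the product simplex $\lambda\mathcal{P}$, whose structure is easy to describe. Since $\lambda\mathcal{P} = \prod_k \lambda_k \Delta_{\binomial{n}{k}-1}$ is a product of dilated simplices, its lattice points are in bijection with non-negative integer tuples $(a_I)_{\varnothing \subsetneq I \subsetneq [n]}$ satisfying $\sum_{|I|=k} a_I = \lambda_k$ for each $k$, equivalently with multi-degree-$\lambda$ monomials $\prod_I p_I^{a_I}$ in the total coordinate ring $R_\mathcal{P}$. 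Under $\hat\phi_\Lambda$, such a tuple maps to $\sum_I a_I v_{I,\Lambda}$, which is exactly a sum of vertices of the required form $\sum_{k,j} v_{I_{(k,j)},\Lambda}$ with the prescribed level multiplicities.

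Next I would show that the level multiplicities are forced automatically from the row sums of $\alpha$, so that the proposition reduces to the containment $\lambda\mathcal{P}_\Lambda \cap \ZZ S \subseteq \NN S$. Each vertex $v_{I,\Lambda}$ with $|I|=k$ has row-$i$ sum equal to $1$ for $i \le k$ and $0$ for $i > k$; hence any $\alpha = \sum_I a_I v_{I,\Lambda}$ with $a_I \in \NN$ has row-$i$ sum $\sum_{k \ge i} c_k$, where $c_k := \sum_{|I|=k} a_I$. On the other hand, the Minkowski decomposition $\lambda\mathcal{P}_\Lambda = \sum_k \lambda_k \mathcal{P}_\Lambda^k$ from \Cref{rmk: general minkowskii sum lambda P} forces the row-$i$ sum of any point of $\lambda\mathcal{P}_\Lambda$ to equal $\sum_{k \ge i} \lambda_k$, and taking successive differences gives $c_k = \lambda_k$ for every $k$.

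The main obstacle is establishing the containment $\lambda\mathcal{P}_\Lambda \cap \ZZ S \subseteq \NN S$, which I would prove by induction on $|\lambda| = \sum_k \lambda_k$. Given an integer point $\alpha$ with $|\lambda| \ge 1$, fix a fractional Minkowski decomposition $\alpha = \sum_k \lambda_k \beta_k$ with $\beta_k \in \mathcal{P}_\Lambda^k$, pick $k^*$ with $\lambda_{k^*} > 0$, and expand $\beta_{k^*} = \sum_{|I|=k^*} t_I v_{I,\Lambda}$. Any vertex $v_{I^*, \Lambda}$ with $t_{I^*} > 0$ satisfies $v_{I^*, \Lambda} \le \alpha$ entry-wise: at each position where $v_{I^*, \Lambda}$ has a $1$, the entry of $\alpha$ is strictly positive (with contribution at least $\lambda_{k^*} t_{I^*} > 0$ from $\beta_{k^*}$, and non-negative contributions from the other $\beta_k$) and integer, hence at least $1$. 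The technical heart of the argument is to choose such a vertex so that the residue $\alpha - v_{I^*, \Lambda}$ remains in $(\lambda - e_{k^*})\mathcal{P}_\Lambda$; I expect this to follow by selecting $I^*$ in the support of a suitably chosen fractional decomposition (for instance, one maximising $t_{I^*}$ so that $\lambda_{k^*} t_{I^*} \ge 1$), after which a careful examination of the $0/1$ support patterns of the matching field vertices verifies membership in the smaller Minkowski sum. The induction hypothesis applied to $\alpha - v_{I^*, \Lambda}$ then completes the decomposition.

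Finally, the normality of $\mathcal{P}_\Lambda^K$ is an immediate consequence: given $\gamma \in m \mathcal{P}_\Lambda^K \cap \ZZ S$, setting $\lambda_k = m$ for $k \in K$ and $\lambda_k = 0$ otherwise gives $\lambda\mathcal{P}_\Lambda = m\mathcal{P}_\Lambda^K$ by the Minkowski sum property. The decomposition furnished by the first part groups into $m$ summands $\gamma_j = \sum_{k \in K} v_{I_{(k,j)}, \Lambda}$, each a lattice point of $\mathcal{P}_\Lambda^K$, yielding $\gamma = \gamma_1 + \cdots + \gamma_m$ as required.
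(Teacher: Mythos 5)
Your setup is correct and your row-sum computation is a valid (and somewhat more concrete) substitute for the paper's appeal to the $\Cl(Y_\mathcal{P})$-grading: it does pin down the level multiplicities $c_k=\lambda_k$ once a decomposition $\alpha=\sum_I a_I v_{I,\Lambda}$ with $a_I\in\NN$ is known to exist. The genuine gap is the step you yourself label the ``technical heart'': the containment $\lambda\mathcal{P}_\Lambda\cap\ZZ S\subseteq\NN S$, i.e.\ the existence of a vertex $v_{I^*,\Lambda}$ with $\alpha-v_{I^*,\Lambda}\in(\lambda-e_{k^*})\mathcal{P}_\Lambda$, is never proved. This is not a routine verification: it is exactly the integer decomposition property of the Minkowski sum $\sum_k\lambda_k\mathcal{P}_\Lambda^k$, which fails for Minkowski sums of general lattice polytopes, so any argument must exploit the specific structure of matching field vertices (partial permutation matrices supported on the top $k$ rows) in an essential way. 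Moreover, the concrete device you suggest does not work as stated: if $\lambda_{k^*}=1$ and $\beta_{k^*}$ lies in the relative interior of $\mathcal{P}_\Lambda^{k^*}$, then $\max_I t_I<1$ and no choice of $I^*$ yields $\lambda_{k^*}t_{I^*}\ge 1$; and the entrywise bound $v_{I^*,\Lambda}\le\alpha$, which you do establish correctly, is far weaker than membership of the residue in the smaller Minkowski sum. ``A careful examination of the $0/1$ support patterns'' is a placeholder for the entire content of the proposition, not a proof of it.

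For comparison, the paper's proof starts from the assertion that every lattice point of $\lambda\mathcal{P}_\Lambda$ with respect to $\ZZ S$ is a \emph{non-negative} integer combination of the generators $s_I=v_{I,\Lambda}$, and then uses the grading to sort the summands into levels; so the non-negativity of the coefficients is the crux in both routes. Your write-up has the merit of making the difficulty explicit, but until the inductive subtraction step is actually carried out, the decomposition statement --- and hence the normality of $\mathcal{P}_\Lambda^{K}$ and everything downstream of it --- is not established.
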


\begin{proof}
Fix a lattice point $\alpha \in \lambda \mathcal{P}_\Lambda$. Recall that $\ZZ S$ is generated by $\{s_I := \hat{\phi}_\Lambda(e_I) \mid \varnothing \subsetneq I \subsetneq [n]\}$ of where $e_I \in \ZZ^{\binomial{n}{1} + \dots + \binomial{n}{n-1}}$ is a standard lattice generator. Each lattice point $s_I$ is a vertex of the polytope $\hat{\phi}_\Lambda(\Delta_k) = \mathcal{P}_\Lambda^{k}$ where $k = |I|$. Therefore, we can write $\alpha$ as a sum of lattice generators $\alpha=\sum_{I} a_I s_I $ for some $a_I \in \NN$.
{By \Cref{lemma: grading}, we have} that $\phi_\Lambda$ is {concurrent} with respect to the {$\Cl(Y_\MP)$}-grading of $R_{\mathcal{P}}$. That is, if $\hat{\phi}$ is the map defined in \cref{eq:phi_hat_map}, $\alpha=\hat{\phi}_\Lambda (\beta)$ for some lattice point $\beta$ { and we write ${\lambda = (\lambda_1, \dots, \lambda_{n-1})}$ for the $\Cl(Y_\MP)$-degree of corresponding monomial $x^\alpha \in \CC[x_{i,j}]$}.
For each lattice generator $s_I = \hat{\phi}_\Lambda(e_I)$, we have that {the corresponding element  $p_I$ has degree} $(0, \dots, 0, 1, 0, \dots, 0)$ where the $1$ appears in position $k = |I|$. It follows that $\lambda_k = \sum_{|I| = k} a_I$ and so we may rewrite the above sum so that $\alpha = \sum_{k = 1}^{n-1} \sum_{j = 1}^{\lambda_k} v_{I_{(k, j)}, \Lambda}$ where $I_{(k, j)} \subseteq [n]$ is a $k$-subset.

Fix ${K=\{k_1<\dots<k_\ell\}}$. The polytope $\mathcal{P} := \mathcal{P}^{{K}}_\Lambda$ is $(\hat{\phi}_\Lambda \otimes_\ZZ \RR) ({\lambda}\MP)$, where $\lambda$ is the indicator vector of ${K}$. That is, for all $i \in [n-1]$ we have ${\lambda_i }= 1 $ if $i \in {K}$, otherwise $\lambda_i = 0$. To prove that $\mathcal{P}$ is normal let $\alpha \in m\mathcal{P} \cap \ZZ S$ be any lattice point. By linearity, $\alpha$ is a lattice point of the polytope $(\hat\phi_\Lambda \otimes \ZZ)(\lambda' \MP)$ where $\lambda' = m\lambda$. By the previous part, we have that $\alpha = \sum_{k \in {K}} \sum_{i = 1}^{\lambda'_k} v_{I_{(k,j)}, \Lambda}$, where $v_{I_{(k,j)}, \Lambda}$ is a vertex of $\mathcal{P}_\Lambda^{k}$. For each $j \in [m]$ define $\alpha_j = \sum_{k \in {K}} v_{I_{(k, j)}, \Lambda} \in \mathcal{P} \cap \ZZ S$. Therefore $\alpha = \alpha_1 + \dots + \alpha_m$ and so $\mathcal{P}$ is normal.
\end{proof}

Before proving \Cref{thm:toric degen partial flag}, we recall the definition of combinatorial mutations via tropical maps~\cite{clarke2022combinatorial}. 

\smallskip
Let $M$ be a lattice and let $N = M^*$ be its dual together with the corresponding $\RR$-vector spaces $M_\RR = M \otimes_\ZZ \RR$ and $N_\RR = N \otimes_\ZZ \RR$. We write $\langle \cdot , \cdot \rangle : M_\RR \times N_\RR \rightarrow \RR$ for the natural pairing between $M_\RR$ and $N_\RR$. Let $w \in M$ be a primitive lattice point and  $F\subset w^\perp \subseteq N_\RR$ a lattice polytope. 
\begin{definition} \label{def : combinatorial mutation}
The \emph{tropical map} associated to $w$ and $F$ is the piecewise linear map defined as
\[
    \varphi_{w,F}: M_\RR \to M_\RR, \quad 
                    x\mapsto x-\min\{ \langle x,v \rangle \mid v\in F \} w.
\]
If $\mathcal{P}\subset M_\RR$ is a lattice polytope such that $\varphi_{w,F}(\mathcal{P})$ is convex, then the polytope $\varphi_{w,F}(\mathcal{P})$ is called a \emph{combinatorial mutation} of $\mathcal{P}$. If a polytope $\mathcal{P}'\subseteq M_\RR$ can be obtained from $\mathcal{P}$ by a sequence of combinatorial mutations, then $\mathcal{P}'$ and $\mathcal{P}$ are called \emph{combinatorial mutation equivalent}.
\end{definition}

Note that tropical maps are piecewise unimodular maps which act bijectively on the lattice $M_\RR$. Many properties of polytopes are preserved by combinatorial mutation, such as their Ehrhart polynomial~\cite{Akhtar2012}.

\begin{remark}\label{rmk : inverse mutation} 
Combinatorial mutations were originally defined for lattice points in the dual lattice $N_\RR$ \cite[Definition 5]{Akhtar2012}. In the literature \cite{clarke2022combinatorial}, it is often required that lattice polytopes $\mathcal{P}\subseteq M_\RR$ that are subject to combinatorial mutations $\mathcal{P} \mapsto \mathcal{P}'$ must contain the origin. In this case, the dual polytope polytope $\mathcal{P}^*$ is well defined and the change $(\mathcal{P}^*) \mapsto (\mathcal{P}'^*)$ coincides with the definition of a combinatorial mutation above given by tropical maps. Note that composing $\varphi_{w,F}$ with a translation $x\mapsto x+u$ where $u \in M$ is a lattice point, is equal to the tropical map $\varphi_{w,F+u}$. So if two polytopes $\mathcal{P}$ and $\varphi_{w,F}(\mathcal{P})$ share at least one point, then we may assume without loss of generality that this point in the origin. We note that translation makes no difference to the construction of toric varieties from polytopes. We also note that for all pairs of matching field polytopes that we show to be related by combinatorial mutations, these pairs of polytopes always share at least one lattice point. Therefore, we may safely drop the assumption that the polytopes contain the origin.
\end{remark}

We are now ready to give a proof of \Cref{thm:toric degen partial flag}. 
\begin{proof}[{\bf Proof of \Cref{thm:toric degen partial flag}}]
We prove the result for the complete flag variety $\Fl_n$ and we note that the general case is completely analogous. 
Recall that the diagonal matching field $B^{w_0}$ induces a toric degeneration of the flag variety which is associated to the Gelfand-Tsetlin polytope. Therefore, the Hilbert function of $\Fl_n$ agrees with the Hilbert function of $\CC[p_I] / \MI_{B^{w_0}}$. Since $\mathcal{P}_{w_0}$ is normal by \Cref{prop: lattice points of flag polytopes}, we have that the Hilbert function of $\CC[p_I] / \MI_{B^{w_0}}=\CC[\mathrm{Cone}(\mathcal{P}_{w_0})\cap \ZZ S\times \ZZ]$ is equal to the Ehrhart polynomial of $\mathcal{P}_{w_0}$. By \cite{Akhtar2012}, tropical maps preserve the Ehrhart function and so the Ehrhart polynomial of $\mathcal{P}_{\Lambda}$ coincides with the Hilbert function of $\Fl_n$. Moreover, $\mathcal{P}_{\Lambda}$ is normal by \Cref{prop: lattice points of flag polytopes}, so we have that the Hilbert function of $\CC[p_I] / \MI_{\Lambda}=\CC[\mathrm{Cone}(\mathcal{P}_{\Lambda})\cap \ZZ S\times \ZZ]$ is equal to the Ehrhart polynomial of $\mathcal{P}_{\Lambda}$. Hence, the Hilbert function of $\Fl_n$ and the Hilbert function of $\CC[p_I] / \MI_{\Lambda}$ agree. Since Gr\"obner degenerations preserve the Hilbert function, 
$\CC[p_I] / {\rm in}_{w_{\Lambda}}(\MI)$ and $\CC[p_I] / \MI_{\Lambda}$ have the same Hilbert function. Since ${\rm in}_{w_{\Lambda}}(\MI) \subseteq \MI_{\Lambda}$, it follows that the ideals are equal. In particular, ${\rm in}_{w_{\Lambda}}(\MI)$ is a toric ideal, as desired. 
\end{proof}

We finish this section by showing that the set of Pl\"ucker variables forms a SAGBI basis for the Pl\"ucker algebra. We first recall the definition of {\it SAGBI basis} from \cite{robbiano1990subalgebra} in our setting. Note that the notion of Khovanskii bases, from \cite{Kaveh2019a}, generalizes the notion of a SAGBI bases to non-polynomial algebras.

\begin{definition}\label{sagbi}
Fix a natural number $n$ and a subset ${K} \subseteq [n]$. Let $\Lambda$ be a matching field for the partial flag variety $\Fl({K};n)$.  Let $\mathcal{A}({K};n)$ be the Pl\"ucker algebra of $\Fl({K};n)$ and let $A_{\Lambda} =\CC[{\rm in}_{w_{\Lambda}}(p_I)]$ be the algebra of $\Lambda$. The set of Pl\"ucker forms $\{p_I\}\subset \CC[x_{i,j}]$ is a SAGBI basis for $\mathcal{A}({K};n)$ with respect to the weight vector $w_{\Lambda}$ if and only if for each $I$, the initial form ${\rm in}_{w_{\Lambda}}(p_I)$ is a monomial and
${\rm in}_{w_{\Lambda}}(\mathcal{A}({K};n))=A_\Lambda$. Here, $w_{\Lambda}$ is the weight vector induced by the matrix $M_\Lambda$.
\end{definition}

Obtaining a toric degeneration of $\Fl({K};n)$  is equivalent to obtaining a SAGBI basis for the Pl\"ucker algebra $\mathcal{A}({K};n)$, since the set of Pl\"ucker forms $\{p_I\}\subset \CC[x_{i,j}]$ is a SAGBI basis for the Pl\"ucker algebra with respect to a matching field if and only if the ideals ${\rm in}_{w_{\Lambda}}(\MI)$ and $\MI_{\Lambda}$ are equal. See \cite[Theorem 11.4]{Sturmfels1996} or \cite[Theorem 11.4]{Mohammadi2019}. Hence, as an immediate corollary of  \Cref{thm:toric degen partial flag} we have the following: 
\begin{corollary}\label{cor:SAGBI}
The Pl\"ucker variables $p_I$ form a SAGBI basis for the Pl\"ucker algebra with respect to the weight vectors arising from matching fields.
\end{corollary}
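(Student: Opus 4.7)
The plan is to derive this corollary as a direct consequence of \Cref{thm:toric degen partial flag} combined with the classical characterization of the SAGBI basis property in terms of initial ideals. The set-up is essentially done in the paragraph immediately preceding the corollary, so the proof amounts to stitching together two ingredients rather than carrying out any new computation.

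First, I would recall the standard dichotomy for toric initial ideals. Fix a coherent matching field $\Lambda$ for $\Fl(K;n)$ and let $w_\Lambda$ be the induced weight vector. By \cite[Theorem~11.3]{Sturmfels1996} we always have the containment ${\rm in}_{w_\Lambda}(\MI)\subseteq \MI_\Lambda$, where $\MI_\Lambda=\ker(\phi_\Lambda)$. The point, recorded in \cite[Theorem~11.4]{Sturmfels1996} and restated in \cite[Theorem~11.4]{Mohammadi2019}, is that the reverse inclusion ${\rm in}_{w_\Lambda}(\MI)\supseteq \MI_\Lambda$ holds if and only if the set of Pl\"ucker forms $\{p_I\}$ forms a SAGBI basis for $\mathcal{A}(K;n)$ with respect to $w_\Lambda$. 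Equivalently, this is the condition that $\Lambda$ gives rise to a toric degeneration of $\Fl(K;n)$, with toric ideal $\MI_\Lambda$.

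Second, I would invoke \Cref{thm:toric degen partial flag}: whenever the matching field polytope $\mathcal{P}_\Lambda$ is combinatorial mutation equivalent to the Gelfand-Tsetlin polytope $\mathcal{P}^{K}_{w_0}$, the matching field $\Lambda$ yields a toric degeneration of $\Fl(K;n)$, that is ${\rm in}_{w_\Lambda}(\MI)=\MI_\Lambda$. Combining this with the equivalence recalled in the previous paragraph gives precisely that the Pl\"ucker variables form a SAGBI basis of $\mathcal{A}(K;n)$ with respect to $w_\Lambda$. This is the content of the corollary.

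There is essentially no obstacle left at this stage: the real work has been absorbed into the proof of \Cref{thm:toric degen partial flag}, which in turn rested on the normality of matching field polytopes (\Cref{prop: lattice points of flag polytopes}) and a Hilbert-function comparison with the Gelfand-Tsetlin case. The only mild subtlety to flag is notational, namely to make explicit that ``the weight vectors arising from matching fields'' in the statement refers to the $w_\Lambda$ associated to those coherent matching fields $\Lambda$ whose polytope is mutation equivalent to $\mathcal{P}^{K}_{w_0}$, which is exactly the class for which \Cref{thm:toric degen partial flag} applies.
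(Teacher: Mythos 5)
Your proposal is correct and follows exactly the paper's route: the paper likewise derives the corollary as an immediate consequence of \Cref{thm:toric degen partial flag} together with the equivalence from \cite[Theorem~11.4]{Sturmfels1996} (or \cite[Theorem~11.4]{Mohammadi2019}) between the SAGBI property and the equality ${\rm in}_{w_\Lambda}(\MI)=\MI_\Lambda$. Your closing remark about the intended scope of ``weight vectors arising from matching fields'' matches the paper's implicit reading, so there is nothing to add.
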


\begin{remark}
It is proven in \cite{clarke2020toric} that the ideal $\MI_{\Lambda}$ is quadratically generated for any so-called block diagonal matching field. Our computations show that this is true for every matching field of the form $B^\sigma$ up to $n=8$. We expect this to be true for all matching fields $B^\sigma$. Notice that, providing a bound for the degree of the generators of a general ideal is a difficult problem. Such problem is usually studied for special families of combinatorial ideals; see, e.g., \cite{White, hibi1987distributive, Ohsugi, ene2011monomial, Cone, mohammadi2010weakly}. 
\end{remark}

\section{Mutations of matching field polytope of partial flag varieties}\label{mutation section}

In this section, we introduce the matching fields $\Bsig$ which are a family of coherent matching fields for all partial flag varieties $\Fl({K};n)$. Each such matching field is indexed by a permutation in $\sigma \in S_n$, generalizing families of coherent matching fields considered in \cite{Mohammadi2019} and \cite{clarke2022combinatorial}. We will show that certain matching field polytopes are combinatorial mutation equivalent. This, together with \Cref{thm:toric degen partial flag} implies that such matching fields gives rise to a toric degeneration of $\Fl({K};n)$.

\subsection{Matching fields from permutations}\label{sec: matching fields from permutations}

We begin with the definition of a matching field for the Grassmannian $\Gr(k, n)$.

\begin{definition} \label{def:matching field B sigma}
Let $k>0$ and $\sigma\in S_n$. Given $I= \{p < q < i_3 < \dots < i_k\} \in \binom{[n]}{k}$, the matching field $\Bsig:\binom{[n]}{k}\to S_k$ is defined by
\[
\Bsig(I) = \left\{
\begin{array}{ll}
    id   & \textrm{if }\sigma(p) > \sigma(q) \textrm{ or } |I| = 1,\\
    (12) & \textrm{otherwise.}
\end{array}
\right.
\]
\end{definition}
\begin{lemma} \label{lemma: matching field weight matrix}
All such matching fields are coherent. In particular, $\Bsig$ is induced by the weight matrix 
\begin{align*}
M^\sigma = 
 \begin{pmatrix}
0 & 0 & \dots & 0\\ 
\sigma(1) & \sigma(2)& \dots & \sigma(n)\\ 
 Nn& N(n-1) &\dots & N\\ 
N^2n& N^2(n-1) &\dots & N^2\\ 
 \vdots & \vdots & \vdots & \vdots\\
N^{n-2}n & N^{n-2}(n-1)& \dots & N^{n-2}\\
\end{pmatrix}   
\end{align*}
for any $N \geq n+1.$
\end{lemma}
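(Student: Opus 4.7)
\bigskip

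\noindent\textbf{Proof plan.} The claim has two components: coherence of $\Bsig$, and the explicit witness $M^\sigma$. The plan is to fix $I = \{p < q < i_3 < \dots < i_k\}$ and verify directly that
\[
\argmin_{\tau \in S_k} \sum_{a=1}^k M^\sigma_{a,\, i_{\tau(a)}}
\]
is exactly the permutation prescribed by \Cref{def:matching field B sigma}, and that this argmin is unique. The cases $k=1$ and $k=2$ are immediate from $M^\sigma$: for $k=1$ there is nothing to check, and for $k=2$ the two candidate permutations yield sums $\sigma(q)$ and $\sigma(p)$ respectively (since row 1 is identically zero), so the argmin is $id$ iff $\sigma(p) > \sigma(q)$, matching the definition.

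The real work is for $k \ge 3$. I would split the weight sum into
\[
S_{\mathrm{top}}(\tau) = \sigma\bigl(i_{\tau(2)}\bigr), \qquad S_{\mathrm{bot}}(\tau) = \sum_{a=3}^k N^{a-2}\bigl(n - i_{\tau(a)} + 1\bigr),
\]
and show that with $N \ge n+1$ the bottom part forces $\tau(a) = a$ for every $a \ge 3$. The mechanism is a two-step reduction. First, for any fixed pair $A = \{\tau(1), \tau(2)\} \subseteq \{1,\dots,k\}$, a standard rearrangement argument shows that $S_{\mathrm{bot}}$ is minimized by the \emph{increasing} bijection from $\{3,\dots,k\}$ onto $B := \{1,\dots,k\}\setminus A$, because swapping a pair of values in two rows $a<b$ changes the sum by a factor proportional to $N^{a-2} - N^{b-2} < 0$. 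Second, I would compare the resulting minimum across different choices of $A$: writing $B = \{b_1<\dots<b_{k-2}\}$, an explicit case split on whether $a_1 \ge 2$ or $a_2 \ge 3$ shows that when $A \neq \{1,2\}$ the index $b_{a-2}$ is strictly less than $a$ for some $a \ge 3$, producing a gap
\[
S_{\mathrm{bot}}^{\min}(A) - S_{\mathrm{bot}}^{\min}(\{1,2\}) \;=\; \sum_{a=3}^k N^{a-2}\bigl(i_a - i_{b_{a-2}}\bigr) \;\ge\; N.
\]
Since $|S_{\mathrm{top}}^{\min}(A) - S_{\mathrm{top}}^{\min}(\{1,2\})| \le n-1$, the hypothesis $N \ge n+1$ gives an overall strict inequality $S^{\min}(A) > S^{\min}(\{1,2\})$, ruling out every $A \ne \{1,2\}$.

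With $A = \{1,2\}$ forced, the minimizer is one of the two permutations that are the identity on $\{3,\dots,k\}$, namely $id$ or $(1\,2)$. Their respective sums differ only in $S_{\mathrm{top}}$, giving $\sigma(q)$ versus $\sigma(p)$, so the argmin is $id$ iff $\sigma(p) > \sigma(q)$ and $(1\,2)$ otherwise; uniqueness follows because $\sigma$ is injective. This matches \Cref{def:matching field B sigma} exactly, concluding coherence. The main obstacle is the bookkeeping in the second step above: showing that \emph{every} deviation from $A = \{1,2\}$ produces a gap of at least $N$ in $S_{\mathrm{bot}}$. The cleanest way to handle this, rather than enumerating all $A$, is to isolate the smallest $a \ge 3$ at which $b_{a-2} < a$ and use that $i_a - i_{b_{a-2}} \ge 1$ together with the fact that the term $N^{a-2}$ dominates all of $S_{\mathrm{top}}$, which has magnitude at most $n$.
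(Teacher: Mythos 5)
Your argument is correct: the $k\le 2$ cases match the paper verbatim, and for $k\ge 3$ your decomposition into $S_{\mathrm{top}}$ and $S_{\mathrm{bot}}$, the rearrangement step forcing the increasing assignment on rows $3,\dots,k$, and the gap estimate $S_{\mathrm{bot}}^{\min}(A)-S_{\mathrm{bot}}^{\min}(\{1,2\})\ge N > n-1 \ge |S_{\mathrm{top}}^{\min}(A)-S_{\mathrm{top}}^{\min}(\{1,2\})|$ all check out (each term $i_a - i_{b_{a-2}}$ is nonnegative because $b_{a-2}\le a$ for any $(k-2)$-subset $B\subseteq[k]$, and at least one term is $\ge 1$ when $A\ne\{1,2\}$), so the argmin is forced to be $id$ or $(1\,2)$ and the $k=2$ computation plus injectivity of $\sigma$ finishes it. The underlying mechanism is the same as the paper's — the geometric weights $N^{a-2}$ on the lower rows dominate the $\sigma$-row and pin down rows $3$ through $k$ — but the organization differs: the paper argues by induction on $k$, using the bottom row (the largest power of $N$) to deduce $\Lambda(I)(k)=k$ and then recursing on $I\setminus\{\max I\}$, and it outsources the details to the proof of Proposition~1 of the cited Grassmannian paper. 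Your version handles all of rows $3,\dots,k$ in one shot via rearrangement and is fully self-contained; it also makes explicit exactly where the hypothesis $N\ge n+1$ is used (one needs $N>n-1$ so that a single displaced index in the bottom block outweighs any change in the $\sigma$-row), which the paper's sketch leaves implicit. Either route is fine; yours is arguably preferable as a standalone proof since it does not lean on an external reference.
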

\begin{proof}
For $k=1$, there is nothing to show. If $k=2$, then let $I = \{p < q\} \in \binom{[n]}{2}$ be any $2$-subset. 

Let $\Lambda$ be the matching field induced by the matrix $M^\sigma$. We have the following:
\[
\Lambda(I) = 
\begin{cases}
id & \text{if } M^\sigma_{1,p} + M^\sigma_{2,q} < M^\sigma_{2,p} + M^\sigma_{1,q},\\
(12) & \text{if } M^\sigma_{2,p} + M^\sigma_{1,q} < M^\sigma_{1,p} + M^\sigma_{2,q}
\end{cases}\ 
= 
\begin{cases}
id & \text{if } \sigma(q) < \sigma(p),\\
(12) & \text{if } \sigma(p) < \sigma(q)
\end{cases}\ 
= B^\sigma(I).
\]
And so the induced matching field coincides with $\Bsig$.
For $k>2$, the result follows directly from the proof of \cite[Proposition~1]{clarke2022combinatorial}.  We sketch the argument below.

Let $\Lambda$ be the matching field induced by $M^\sigma$ and $I \subseteq [n]$ be a subset such that $k := |I|$. We show $\Lambda(I) = B^\sigma(I)$ by induction on $k$. If $k \in \{1, 2\}$, then we are done by the above paragraph. So we may take $k \ge 3$. First, we deduce that $\Lambda(I)(k) = k$ by calculating the induced weight $(w_\Lambda)_I$. Next, we apply induction by considering the set $I \backslash \{\max(I)\}$.
\end{proof}

\begin{example}[Block diagonal matching fields \cite{Mohammadi2019}]
Fix a tuple $\bm{a}=(a_1,\dots,a_r)$ such that $\sum_{i=1}^r a_i = n$. Let $\alpha_0=1$ and $\alpha_\ell=\sum_{i=1}^\ell a_i$ for each $1\leq \ell\leq n$. Consider the sets $I_\ell=\{\alpha_{\ell-1}+1,\dots,\alpha_{\ell}\}$. Define the matching field $B_{\bm{a}}$ such that for each $I\in \binom{[n]}{k}$,
\begin{equation}
    B_{\bm{a}}(I)= \left\{
\begin{array}{ll}
    id   & \textrm{if } |I_t\cap \{1,\dots,k\}| \geq 1, \textrm{ where $I_t$ is the minimal $t$ so that $I_t\cap I\neq \varnothing$}\\
    (12) & \textrm{otherwise.}
\end{array}
\right.
\end{equation}
Note that $B_{\bm{a}}$ is coherent because it coincides with the matching field $\Bsig$ where
$w_{\bm{a}} = (I_1^\downarrow, I_2^\downarrow, \dots I_r^\downarrow) \in S_n$. The notation $I_j^\downarrow$ denotes the sequence whose entries are $I_j$ written in descending order. 
\end{example}

Fix a partial flag variety $\Fl({K};n)$ with ${K} \subseteq [n]$. The matching field $\Bsig$ is defined simultaneously for $\Gr(k, n)$ for all $k \in {K}$. Therefore, $\Bsig$ is a matching field for the partial flag variety $\Fl({K};n)$. Moreover, $B^\sigma$ is coherent by \Cref{lemma: matching field weight matrix}, since the topmost rows of $M^\sigma$ are equal for each $k \in {K}$.

\subsection{Mutations of polytopes of flag varieties}\label{flag variety section}

Here, we will exploit \Cref{Minkowski} to construct toric degenerations of $\Fl({K};n)$. We prove \Cref{thm: DSR_2 Flag varieties} by constructing a sequence of combinatorial mutations between matching field polytopes. Throughout this section, unless otherwise specified, we fix the following setup.

\begin{setup}\label{setup flag}
Fix $n$, a permutation $\sigma \in S_n$ and a positive integer $\ell \in [n-1]$. We define $\lambda = \sigma^{-1}(\ell)$ and $\mu = \sigma^{-1}(\ell+1)$. We assume that $\lambda < \mu$ and define $\tau = (\ell \ \ell+1) \sigma$. For each $k \in [n]$, the matching field polytope for the Grassmannian $\Gr(k,n)$ associated to the matching fields $B^\sigma$ and $B^\tau$ will be denoted by $\mathcal{P}_\sigma^k$ and $\mathcal{P}_\tau^k$ respectively. Each such polytope is assumed to live in $\RR^{k \times n} \subseteq \RR^{(n-1)\times n}$. Explicitly, this inclusion is given by extending the standard basis $\{e_{i,j} \mid i \in [k], \ j \in [n]\}$ of $\RR^{k \times n}$ to the standard basis of $\{e_{i,j} \mid i \in [n-1], \ j \in [n]\}$ of $\RR^{(n-1) \times n}$. From \Cref{Minkowski}, it follows that for each non-empty subset ${K} \subseteq [n-1]$, the matching field polytope for the partial flag variety $\Fl({K};n)$ associated to the matching field $B^\gamma$ is given by $\mathcal{P}_\gamma = \sum_{k \in {K}} \mathcal{P}_\gamma^k$ for each $\gamma \in \{\sigma, \tau\}$. We primarily work with the polytopes $\mathcal{P}_\sigma^k$ and so write $v_I^k$ for the vertex of the polytope $\mathcal{P}_\sigma^k$ corresponding to the $k$-subset $I \subseteq [n]$.
\end{setup}

Note that if any of $\ell$, $\lambda$ or $\mu$ are fixed then the rest are also determined uniquely by $\sigma$.

\begin{definition}\label{def flag tropical map}
Assume \Cref{setup flag} holds. We define $w \in \RR^{(n-1) \times n}$ by
\[ w_{i,j} = 
\begin{cases} 
1 & \text{ if } (i=1 \text{ and } j=\lambda) \text{ or } (i=2 \text{ and } j = \mu),\\
-1 & \text{ if } (i=1 \text{ and } j = \mu) \text{ or } (i=2 \text{ and } j = \lambda),\\
0 & \text{ otherwise}.
\end{cases}\]
We define $f \in \RR^{(n-1) \times n}$ by
    \[ (f)_{i,j} = \begin{cases} 1 & \text{ if } i=1, \  \sigma(j) > \ell +1,\\
    -1 & \text{ if } i=2, \ \sigma(j)\geq \ell,\\
    0 & \text{ otherwise}\end{cases}
    \]
We note that $f \in w^\perp$ and so we may define the tropical map $\varphi^{\sigma, \ell} := \varphi_{w,F}$ where $F = \conv\{0, f\}$.
\end{definition}

\begin{figure}
    \centering
    \includegraphics[scale=0.35]{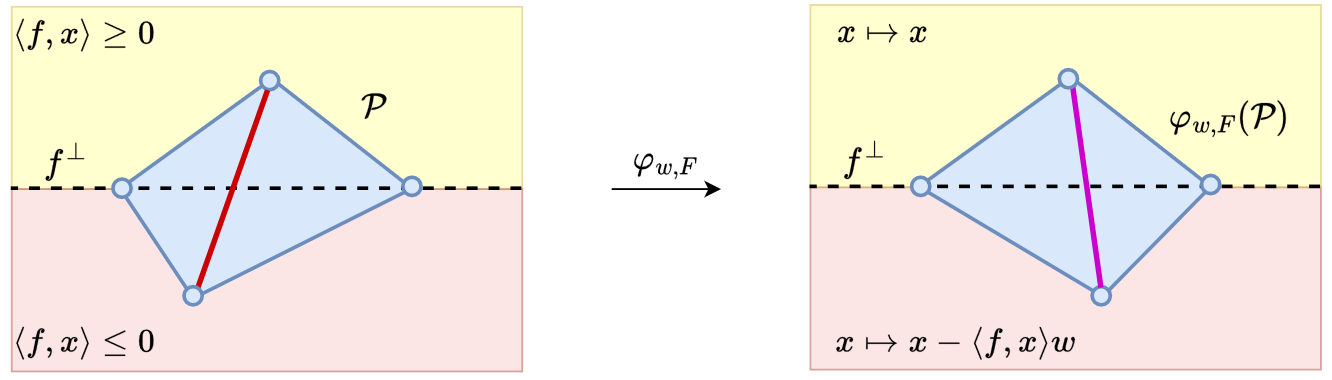}
    \caption{{A diagram of a combinatorial mutation. To check that the image of the tropical map $\varphi_{w, F}$ does not have any \textit{new vertices}, we check that every red line segment, as in the left-hand figure, is in the relative interior of a $2$-dimensional face of the polytope. To check that the image of $\varphi_{w,F}$ is convex, we show that every purple line segment, as in the right-hand figure, is in the interior of the image.}}
    \label{fig: mutation no new vertices}
\end{figure}

\begin{lemma} \label{lemma: flag inner product}
Fix $k<n$ and \Cref{setup flag}. For each $v \in V(\mathcal{P}_\sigma^k)$ we have $\langle f, v \rangle\in \{-1,0,1\}$. More precisely:
\begin{enumerate}[label=(\arabic*)]
    \item $\langle f, v \rangle=1$ if and only if $v=v_I^k$ with $I=\{j_1,j_2< j_3<\dots<j_k\}$ such that $(k=1$ and $\sigma(j_1)>\ell+1)$ or $(k\geq 2$ and $\sigma(j_1)>\ell+1>\ell>\sigma(j_2))$;
    \item $\langle f, v \rangle=-1$ if and only if $v=v_I^k$ with $k\geq 2$ and $I=\{\lambda,\mu< i_3<\dots<i_k\}$.
\end{enumerate}
\end{lemma}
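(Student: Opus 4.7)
The key observation is that $f$ is supported only on rows $1$ and $2$ of $\RR^{(n-1) \times n}$, whereas the vertex $v_I^k = \sum_{i=1}^k e_{i, j_{B^\sigma(I)(i)}}$ places a single $1$ in each of rows $1, \dots, k$; the entries in rows $3, \dots, k$ thus contribute nothing to $\langle f, v_I^k \rangle$. Consequently, $\langle f, v_I^k \rangle$ depends only on the two smallest elements of $I$, which I call $p < q$, and on $B^\sigma(I) \in \{\mathrm{id}, (12)\}$. The plan is to split into the two cases for $B^\sigma(I)$, which by \Cref{def:matching field B sigma} is $\mathrm{id}$ exactly when $\sigma(p) > \sigma(q)$.

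For $k = 1$ the vertex is $v = e_{1, j_1}$ and $\langle f, v \rangle = f_{1, j_1} \in \{0, 1\}$, equalling $1$ precisely when $\sigma(j_1) > \ell+1$; this handles (1) for $k = 1$ and shows the value $-1$ is impossible. For $k \geq 2$ with $I = \{p < q < i_3 < \dots < i_k\}$, one may write
\[
\langle f, v_I^k \rangle = f_{1, j_1} + f_{2, j_2} = [\sigma(j_1) > \ell+1] - [\sigma(j_2) \geq \ell],
\]
where $(j_1, j_2) = (p, q)$ if $B^\sigma(I) = \mathrm{id}$ and $(j_1, j_2) = (q, p)$ if $B^\sigma(I) = (12)$, so that $j_1$ and $j_2$ are the columns of the $1$s in rows $1$ and $2$ of $v_I^k$ respectively. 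In either subcase the definition of $B^\sigma$ forces $\sigma(j_1) > \sigma(j_2)$, and the Iverson-bracket expression on the right automatically lies in $\{-1, 0, 1\}$.

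For $\langle f, v_I^k \rangle = +1$, one reads off $\sigma(j_1) > \ell+1$ and $\sigma(j_2) < \ell$, which is exactly the chain $\sigma(j_1) > \ell+1 > \ell > \sigma(j_2)$ in (1). For $\langle f, v_I^k \rangle = -1$, one reads off $\sigma(j_1) \leq \ell+1$ and $\sigma(j_2) \geq \ell$; combined with $\sigma(j_1) > \sigma(j_2)$ and integrality, this forces $\sigma(j_1) = \ell+1$ and $\sigma(j_2) = \ell$, i.e.\ $\{j_1, j_2\} = \{\mu, \lambda\}$. The subcase $B^\sigma(I) = \mathrm{id}$ would require $j_1 = p < q = j_2$, hence $\mu < \lambda$, contradicting the standing hypothesis $\lambda < \mu$; so only the subcase $B^\sigma(I) = (12)$ survives, with $p = \lambda$ and $q = \mu$, giving exactly the form $I = \{\lambda, \mu < i_3 < \dots < i_k\}$ in (2). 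The only delicate point is this last elimination, which is precisely where the standing hypothesis $\lambda < \mu$ is used.
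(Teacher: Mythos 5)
Your proof is correct and follows essentially the same route as the paper's: both rest on the observation that only rows $1$ and $2$ of $v_I^k$ meet the support of $f$, that the matching field places the element of $\{p,q\}$ with larger $\sigma$-value in row $1$, and then a direct case check on whether $\sigma(j_1)>\ell+1$ and $\sigma(j_2)\geq\ell$. Your single closed-form expression $[\sigma(j_1)>\ell+1]-[\sigma(j_2)\geq\ell]$ is a clean repackaging of the paper's case analysis and makes the ``only if'' directions immediate, but it is not a genuinely different argument.
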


\begin{proof}
Note that if $k=1$, then the entries of $f \in \RR^{1\times n} \subseteq \RR^{(n-1) \times n}$ are either $0$ or $1$, hence $\langle v_I,f \rangle \in \{0,1\}$. Assume now that $k \ge 2$.

Let $v = v_I \in V(\MP^k_\sigma)$ and suppose that $I=\{\lambda< \mu<i_3<\dots<i_k\}$. Note that the only non-zero coordinates of $f$ lie in the first two rows and $\ell=\sigma(\lambda) < \sigma(\mu) = \ell+1$. Therefore, we have
\[ \langle f, v \rangle =  (f)_{1,\mu} + (f)_{2,\lambda} = 0-1 = -1.\]
Suppose that $I=\{i_1<i_2<i_3<\dots<i_k\}$ with $\{i_1,i_2\} \neq \{ \lambda, \mu\}$. Let $a, b \in \{1, 2 \}$ be indices such that $\sigma(i_a) < \sigma(i_b)$. It follows that the inner product of $f$ and $v$ is given by
\[
\langle f,v\rangle = (f)_{1,i_b}+(f)_{2,i_a} = \begin{cases} 0+0=0 & \text{ if } \sigma(i_b)\leq \ell+1,\\
1+0 = 1 & \text{ if } \sigma(i_a)< \ell \text{ and } \sigma(i_b)>\ell+1,\\
1-1 = 0 & \text{ if } \sigma(i_a) \geq \ell,
\end{cases}\]
In particular, $\langle f, v_I \rangle\in \{-1,0,1\}$ and so (1) and (2) hold.
\end{proof}

\begin{prop}\label{prop: tropical map bijection flag}
    Fix $k<n$ and \Cref{setup flag}. We have $\varphi^{\sigma, \ell}(V(\MP_\sigma^k)) = V(\MP_\tau^k)$ is a bijection between the vertices of $\MP_\sigma^k$ and $\MP_\tau^k$ that preserves the indexing of the vertices.
\end{prop}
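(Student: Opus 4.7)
The strategy is to compare the vertex formulas of $\mathcal{P}_\sigma^k$ and $\mathcal{P}_\tau^k$ subset-by-subset, and then match the difference against the behavior of the tropical map $\varphi^{\sigma,\ell}$ described by \Cref{lemma: flag inner product}. Recall that for any $k$-subset $I=\{j_1<\dots<j_k\}$, the vertex $v_I^k$ of $\mathcal{P}_\gamma^k$ (for $\gamma\in\{\sigma,\tau\}$) is $\sum_{i=1}^k e_{i,\,j_{B^\gamma(I)(i)}}$, so $v_I$ depends on $\gamma$ only through the value of $B^\gamma(I)\in S_k$. Since $B^\gamma(I)\in\{id,(12)\}$, the vertex $v_I^{\sigma}$ and $v_I^{\tau}$ differ exactly when $B^\sigma(I)\neq B^\tau(I)$, in which case $v_I^{\tau}-v_I^{\sigma}=\pm w$.

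The first step is a direct comparison of $B^\sigma(I)$ and $B^\tau(I)$. Since $\tau$ swaps the values $\ell$ and $\ell+1$ of $\sigma$ and fixes the rest, the relative order of $\sigma(j_1)$ and $\sigma(j_2)$ agrees with that of $\tau(j_1)$ and $\tau(j_2)$ unless $\{\sigma(j_1),\sigma(j_2)\}=\{\ell,\ell+1\}$; a short case analysis (checking the case where $\{\sigma(j_1),\sigma(j_2)\}\cap\{\ell,\ell+1\}$ has $0$, $1$, or $2$ elements) confirms this. Since $\lambda<\mu$, the exceptional case happens exactly when $j_1=\lambda$ and $j_2=\mu$. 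In that case $\sigma(j_1)<\sigma(j_2)$ but $\tau(j_1)>\tau(j_2)$, so $B^\sigma(I)=(12)$ while $B^\tau(I)=id$, and a direct computation gives
\[
v_I^{\tau}-v_I^{\sigma}=(e_{1,\lambda}+e_{2,\mu})-(e_{1,\mu}+e_{2,\lambda})=w.
\]
For all other $I$, one has $B^\sigma(I)=B^\tau(I)$ and hence $v_I^{\sigma}=v_I^{\tau}$.

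The second step is to match this with the action of $\varphi^{\sigma,\ell}$. With $F=\conv\{0,f\}$, one has $\min\{\langle v,u\rangle:u\in F\}=\min\{0,\langle v,f\rangle\}$, so
\[
\varphi^{\sigma,\ell}(v)=\begin{cases} v & \text{if }\langle v,f\rangle\in\{0,1\},\\ v+w & \text{if }\langle v,f\rangle=-1.\end{cases}
\]
By \Cref{lemma: flag inner product}, for $v=v_I^{\sigma}\in V(\mathcal{P}_\sigma^k)$ the value $\langle f,v_I^\sigma\rangle=-1$ occurs precisely when $k\geq 2$ and $I=\{\lambda,\mu,i_3,\dots,i_k\}$; otherwise $\langle f,v_I^\sigma\rangle\in\{0,1\}$. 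This matches exactly the case distinction from the first step, yielding $\varphi^{\sigma,\ell}(v_I^\sigma)=v_I^\tau$ in all cases (including $k=1$, where $v_I^\sigma=v_I^\tau$ and $\langle f,v_I^\sigma\rangle\in\{0,1\}$ always).

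Finally, to conclude the bijection statement, it suffices to observe that the assignment $I\mapsto v_I^\gamma$ is itself injective: the set of columns carrying a $1$ in the first $k$ rows of $v_I^\gamma$ recovers $I$. Hence the vertex sets $V(\mathcal{P}_\sigma^k)$ and $V(\mathcal{P}_\tau^k)$ are both indexed bijectively by $\binom{[n]}{k}$, and the equality $\varphi^{\sigma,\ell}(v_I^\sigma)=v_I^\tau$ for every $I$ produces an index-preserving bijection $\varphi^{\sigma,\ell}(V(\mathcal{P}_\sigma^k))=V(\mathcal{P}_\tau^k)$. I do not expect any real obstacle here; the only care required is in the comparison of $B^\sigma(I)$ and $B^\tau(I)$, which is fully handled by the three-case split on $|\{\sigma(j_1),\sigma(j_2)\}\cap\{\ell,\ell+1\}|$.
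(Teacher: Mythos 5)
Your proof is correct and follows essentially the same route as the paper: identify via \Cref{lemma: flag inner product} that the only vertices moved by $\varphi^{\sigma,\ell}$ are those indexed by $I=\{\lambda,\mu<i_3<\dots<i_k\}$, and check that these are translated by $w$ onto the corresponding vertices of $\MP_\tau^k$ while all other vertices are fixed and already agree. Your explicit comparison of $B^\sigma(I)$ with $B^\tau(I)$ and the injectivity of $I\mapsto v_I$ just make explicit what the paper leaves implicit (the paper instead closes with a counting argument), so there is nothing to object to.
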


\begin{proof}
    Recall that the tropical map $\varphi^{\sigma, \ell} := \varphi_{w, F}$ acts as follows
    \[
    \varphi^{\sigma, \ell}(x) = 
    \begin{cases}
        x & \text{if } \langle f, x \rangle \ge 0, \\
        x - \langle f,x \rangle w & \text{if } \langle f,x\rangle < 0
    \end{cases}
    \]
    for each $x \in \RR^{(n-1) \times n}$. By \Cref{lemma: flag inner product}, the only vertices of $\MP_\sigma^k$ that are not fixed by $\varphi^{\sigma, \ell}$ are indexed by subsets $I = \{\lambda, \mu < i_3 < \dots < i_k \}$. For each such vertex $v_I$ of $\MP_\sigma^k$, we have that $v_I - w$ is the vertex of $\MP_\tau^k$ indexed by $I$. Since $\MP_\sigma^k$ and $\MP_\tau^k$ have the same number of vertices, it follows that $\varphi^{\sigma, \ell}$ acts as a bijection on the vertices.
\end{proof}

To describe the permutations that we consider, we make use of the language of permutation patterns.

\begin{definition}\label{def : permutation pattern}
A permutation $\sigma\in S_n$ written in single-line notation $(\sigma(1), \dots,  \sigma(n))$ is said to \textit{contain the pattern $\pi = (\pi_1, \dots, \pi_k) \in S_k$} for some $k\leq n$ if there is a subsequence $(\sigma(i_1), \sigma(i_2), \dots, \sigma(i_k))$ for some $1\leq i_1<i_2<\dots<i_k\leq n$, such that the relative order of the subsequence coincides with that of $\pi$. Explicitly, for all $p, q \in [k]$ we have that $\sigma_{i_p} < \sigma_{i_q}$ if and only if $\pi_p < \pi_q$. If $\sigma$ does not contain the pattern $\pi$, then we say that $\sigma$ is free of pattern $\pi$.
\end{definition} 

For example, the only permutation in $S_n$ that is free of the pattern $12$ is $w_0 = (n, n-1, \dots, 1)$. For permutations on a few indices, we omit the brackets from the single line notation. For example the identity permutation in $S_4$ is represented by $1234$. 

\smallskip

We proceed by showing the following technical lemmas (Lemmas~\ref{lemma: sum vertices flag}, \ref{lemma: bad patterns flag} and \ref{prop: mutation flag}) that we will use to prove \Cref{thm: DSR_2 Flag varieties}. The overarching idea of the proof is shown in \Cref{fig: mutation no new vertices}. Assume \Cref{setup flag}, let $\varphi_{w, F}$ be the tropical map as in \Cref{def flag tropical map} with $F = \conv\{0, f\}$ and let $\MP_{B^\sigma}$ be the matching field polytope associated to $B^\sigma$. Suppose that for every pair of vertices $v_1, v_2 \in V(\MP_{B^\sigma})$ with $\langle v_1, f\rangle = 1 = -\langle v_2, f\rangle$ there exist vertices $u_1, u_2 \in V({\mathcal{P}_{B^\tau}}) \cap f^\perp$ such that $v_1 + v_2 = u_1 + u_2$. Then it follows that $\varphi_{w, F}(\MP_{B^\sigma}) \subseteq \conv\{\varphi_{w, F}(v) \mid v \in V({\mathcal{P}_{B^\tau}}) \} = \MP_{B^\tau}$.

\begin{lemma} \label{lemma: sum vertices flag}
Assume that $\sigma$ is free of the patterns $1423$ and $4123$. Furthermore, assume that $\mu = \max \{ i \in [n] \mid \sigma(i) > n+1-i\}$. Then for every $v^k_I \in V(\mathcal{P}_\sigma^k)$ and $v^h_J \in V(\mathcal{P}_\sigma^h)$ with $\langle f, v_I^k \rangle=-1$, $\langle f, v_J^h \rangle=1$, there exist $u_1\in V(\mathcal{P}_\sigma^k) \cap f^\perp$, $u_2\in V(\mathcal{P}_\sigma^h) \cap f^\perp$ such that $v_I^{{k}}+v_J^{{h}} = u_1+u_2$.
\end{lemma}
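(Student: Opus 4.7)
The plan is to construct $u_1,u_2$ by one of two explicit exchanges on the row-$1$ and row-$2$ columns of $v_I^k$ and $v_J^h$. Write $v_I^k = e_{1,\mu}+e_{2,\lambda}+\sum_{r\geq 3}e_{r,i_r}$ and $v_J^h = e_{1,a}+e_{2,b}+\sum_{r\geq 3}e_{r,j_r}$, where $\sigma(a)>\ell+1$ and $\sigma(b)<\ell$ by \Cref{lemma: flag inner product}. Since $u_1+u_2 = v_I^k+v_J^h$ forces row-$1$ columns in $\{\mu,a\}$ and row-$2$ columns in $\{\lambda,b\}$, and the only resulting pairs giving $\langle f,\cdot\rangle=0$ are $(a,\lambda)$ and $(\mu,b)$, there are essentially two candidate configurations: the \emph{primary exchange} $u_1=e_{1,a}+e_{2,\lambda}+\sum_{r\geq 3}e_{r,i_r}$, $u_2=e_{1,\mu}+e_{2,b}+\sum_{r\geq 3}e_{r,j_r}$, and the \emph{dual exchange} obtained by swapping their top two rows.

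The maximality condition on $\mu$ yields $\sigma(i)<\ell$ for every $i>\mu$, so $\sigma(a)>\ell+1$ forces $a<\mu$. Combined with $\lambda,\mu<i_3$ (when $k\geq 3$), this shows that $\{a,\lambda\}$ are the two smallest elements of $(I\setminus\{\mu\})\cup\{a\}$; together with $\sigma(a)>\sigma(\lambda)$, the matching field $B^\sigma$ places $a$ in row~$1$ and $\lambda$ in row~$2$, and so the first component of the primary exchange is a genuine vertex of $\mathcal{P}_\sigma^k$. Its second component is a vertex of $\mathcal{P}_\sigma^h$ if and only if $\{\mu,b\}$ are the two smallest elements of $(J\setminus\{a\})\cup\{\mu\}$, equivalently $\mu<j_3$ (which automatically rules out $\mu\in\{j_3,\dots,j_h\}$). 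Hence the primary exchange settles the case $\mu<j_3$.

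When $\mu\geq j_3$, I would appeal to the dual exchange. The inequality chain $b<j_3\leq\mu<i_3$ shows that $\{b,\mu\}$ are the two smallest elements of the first subset, and $\sigma(\mu)>\sigma(b)$ makes $u_1=e_{1,\mu}+e_{2,b}+\sum_{r\geq 3}e_{r,i_r}$ a vertex of $\mathcal{P}_\sigma^k$. It remains to verify that $u_2=e_{1,a}+e_{2,\lambda}+\sum_{r\geq 3}e_{r,j_r}$ is a vertex of $\mathcal{P}_\sigma^h$, which boils down to $\lambda<j_3$ (automatically giving $\lambda\notin\{j_3,\dots,j_h\}$). This inequality is the principal obstacle and is precisely where the pattern-avoidance hypothesis is used: if instead $\lambda\geq j_3$, then $j_1<j_2<\lambda<\mu$ with $\{j_1,j_2\}=\{a,b\}$ and values $(\sigma(j_1),\sigma(j_2),\ell,\ell+1)$; ranking these values produces the forbidden pattern $4123$ when $a<b$ and the forbidden pattern $1423$ when $b<a$, contradicting the hypothesis. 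Thus one of the two exchanges always supplies the required decomposition $v_I^k+v_J^h=u_1+u_2$.
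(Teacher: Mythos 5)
Your proof is correct and follows essentially the same route as the paper: both rest on exactly the two exchanges you describe (swapping the row-$1$ columns $\mu \leftrightarrow j_1$ or the row-$2$ columns $\lambda \leftrightarrow j_2$), with the maximality of $\mu$ forcing $j_1 < \mu$ and the pattern-avoidance of $1423$ and $4123$ ruling out $\lambda \geq j_3$; the paper merely organizes the cases by $h$ and the position of $j_2$ relative to $\mu$ rather than by whether $\mu < j_3$. The only cosmetic gap is that writing $v_J^h = e_{1,a}+e_{2,b}+\cdots$ presumes $h \geq 2$; for $h=1$ your primary exchange still applies with the $e_{2,b}$ term absent, exactly as in the paper's first case.
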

\begin{proof}
Let $v^k_I \in V(\mathcal{P}_\sigma^k)$ and $v^h_J \in V(\mathcal{P}_\sigma^h)$ such that $\langle f, v_I^k \rangle=-1$, $\langle f, v_J^h \rangle=1$. Note that $k\geq 2$. Moreover by \Cref{lemma: flag inner product} it follows that $I=\{ \lambda, \mu< i_3<\dots< i_k \}$ and $J= \{ j_1,j_2 < j_3 <\dots < j_h \}$ such that $\sigma(j_1) > \ell+1$ and if $h \ge 2$ then we also have $\ell > \sigma(j_2)$. By the assumption on $\mu$, it follows that $j_1 < \mu$. Since $\sigma$ is free of $1423$ and $4123$, if $h \ge 2$ then it follows that $\max\{j_1, j_2\} > \lambda$. We proceed by taking cases on $h$.
\setcounter{case}{0}
\begin{case} Assume $h = 1$. Then $J=\{j_1\}$ with $\sigma(j_1)>\ell+1$. Since $j_1 < \mu < i_3$, we have
\[ v_I^k + v_J^h = \left( e_{1,\mu}+e_{2,\lambda}+ \sum_{a=3}^k e_{a,i_a} \right) + e_{1,j_1} = \left( e_{1,j_1}+e_{2,\lambda}+ \sum_{a=3}^k e_{a,i_a} \right) + e_{1,\mu} = u_1 + u_2\]
where
$u_1 = v^k_{\{ j_1,\lambda<i_3<\dots<i_k \}} \in V(\mathcal{P}_\sigma^k)$ and $u_2 = v^h_{\{\mu\}} \in V(\mathcal{P}_\sigma^h)$, which satisfy $\langle f, u_1\rangle=\langle f,u_2\rangle=0$.
\end{case}

\begin{case}
Assume $h=k=2$. Then $I=\{\lambda,\mu\}$ and $J=\{j_1,j_2\}$ with $\sigma(j_2) < \ell < \ell+1 < \sigma(j_1)$. We can write
\[
v_I^k + v_J^h = \left( e_{1,\mu}+e_{2,\lambda}\right) + \left(e_{1,j_1}+e_{2,j_2}\right)
= \left( e_{1,j_1}+e_{2,\lambda} \right) + \left(e_{1,\mu}+e_{2,j_2}\right) = u_1 + u_2
\]
with $u_1=e_{1,j_1}+e_{2,\lambda}$ and $u_2=e_{1,\mu}+e_{2,j_2}$. 
\end{case}

\begin{case}
Assume $h=2$ and $k \geq 3$. By assumption, we have that $J=\{j_1,j_2\}$ with $\sigma(j_2) < \ell < \ell+1 < \sigma(j_1)$ and $j_1 < \mu< i_3$, hence
\[
v_I^k + v_J^h = \left( e_{1,\mu}+e_{2,\lambda}+ \sum_{a=3}^k e_{a,i_a} \right) + \left(e_{1,j_1}+e_{2,j_2}\right)
= \left( e_{1,j_1}+e_{2,\lambda}+ \sum_{a=3}^k e_{a,i_a} \right) + \left(e_{1,\mu}+e_{2,j_2}\right) = u_1 + u_2
\]
where $u_1 = v^k_{\{ j_1,\lambda<i_3<\dots<i_k \}} \in V(\mathcal{P}_\sigma^k)$ and $u_2 = v^h_{\{\mu,j_2\}} \in V(\mathcal{P}_\sigma^h)$, which satisfy $\langle f, u_1\rangle=\langle f,u_2\rangle=0$.
\end{case}

\begin{case}
Assume $h \geq 3$. Then $J= \{j_1,j_2<j_3<\dots<j_h\}$ with $\sigma(j_2)< \ell <\ell+1<\sigma(j_1)$. If $k = 2$ then note that $\max\{j_1, j_2 \} > \lambda$, and so we have
\[
v_I^k +v_J^h = \left( e_{1,\mu}+ e_{2,\lambda} \right) + \left( \sum_{b=1}^h e_{b,j_b} \right)
= \left( e_{1,\mu}+e_{2,j_2}\right) + \left( e_{1,j_1}+e_{2,\lambda} + \sum_{b=3}^h e_{b,j_b} \right) = u_1 + u_2
\]
where $u_1 = v^k_{\mu,j_2} \in V(\mathcal{P}_\sigma^k)$ and $u_2 = v^h_{j_1,\lambda<j_3<\dots<j_h} \in V(\mathcal{P}_\sigma^h)$, which satisfy $\langle f, u_1\rangle=\langle f,u_2\rangle = 0$. Otherwise, if $k \ge 3$ then we take further cases with either $j_2 < \mu$ or $j_2 > \mu$.
\begin{subcase}
Assume that $j_2 < \mu$. In this case we have that
\[
v_I^k +v_J^h 
= \left( e_{1,\mu}+ e_{2,\lambda} + \sum_{a=1}^k e_{a,i_a} \right) + \left( \sum_{b=1}^h e_{b,j_b} \right)
= \left( e_{1,\mu}+e_{2,j_2}+\sum_{a=3}^k e_{a,i_a} \right) + \left( e_{1,j_1}+e_{2,\lambda} + \sum_{b=3}^h e_{b,j_b} \right) 
= u_1 + u_2
\]
where $u_1 = v^k_{\mu,j_2<i_3<\dots<i_k} \in V(\mathcal{P}_\sigma^k)$ and $u_2 = v^h_{j_1,\lambda<j_3<\dots<j_h} \in V(\mathcal{P}_\sigma^h)$, which satisfy $\langle f, u_1\rangle=\langle f,u_2\rangle = 0$.
\end{subcase}

\begin{subcase}
Assume that $j_2 > \mu$. In this case we have that
\[
v_I^k +v_J^h 
= \left( e_{1,\mu}+ e_{2,\lambda} + \sum_{a=1}^k e_{a,i_a} \right) + \left( \sum_{b=1}^h e_{b,j_b} \right)
= \left( e_{1,j_1} + e_{2,\lambda} + \sum_{a=3}^k e_{a,i_a} \right) + \left( e_{1,\mu}+ e_{2,j_2} + \sum_{b=3}^h e_{b,j_b} \right) 
= u_1 + u_2
\]
where $u_1 = v^k_{j_1,\lambda<i_3<\dots <i_k} \in V(\mathcal{P}_\sigma^k)$ and $u_2 = v^h_{\mu,j_2<j_3<\dots< j_h} \in V(\mathcal{P}_\sigma^h)$, which satisfy $\langle f, u_1\rangle=\langle f,u_2\rangle = 0$.
\end{subcase}
\end{case}

In each case we have constructed $u_1 \in V(\mathcal{P}_\sigma^k)$ and $u_2 \in V(\mathcal{P}_\sigma^h)$ such that $v_I^k + v_J^h = u_1 + u_2$ and $\langle f, u_1\rangle=\langle f,u_2\rangle = 0$, so we are done.
\end{proof}

An easy consequence of the lemma above is the following result which shows that every point in the Minkowski sum $\mathcal{P}_\sigma^{{K}}$ can be written as a sum of vertices that all lie in the same half-space defined by $f$.

\begin{lemma} \label{lemma: bad patterns flag}
Assume \Cref{setup flag} holds and that $\mu = \max\{ i \in [n] \mid \sigma(i) > n+1-i\}$. If $\sigma$ is free of patterns $1423, 4123, 1324$ and $3124$, then so is $\tau = (\ell \ \ell+1) \sigma$.
\end{lemma}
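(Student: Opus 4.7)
The plan is to prove the contrapositive: suppose that $\tau = (\ell\ \ell+1)\sigma$ contains one of the patterns in $\{1423, 4123, 1324, 3124\}$, and derive that either $\sigma$ already contains one of these, contradicting the avoidance hypothesis, or else the maximality condition $\mu = \max\{i \in [n] : \sigma(i) > n+1-i\}$ fails. The key fact I will exploit is that $\tau$ and $\sigma$ agree on $[n] \setminus \{\lambda, \mu\}$, while at $\lambda < \mu$ they swap the two \emph{consecutive} integer values $\tau(\lambda) = \ell+1$ and $\tau(\mu) = \ell$.

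Let $i_1 < i_2 < i_3 < i_4$ witness a forbidden pattern in $\tau$. I would split according to $s := |\{i_1,i_2,i_3,i_4\} \cap \{\lambda, \mu\}|$. If $s \in \{0,1\}$, then I claim $\sigma$ exhibits the same pattern at the same positions. The case $s=0$ is immediate. For $s=1$, say $\lambda$ lies in the witness and $\mu$ does not; the entry at $\lambda$ changes by exactly $1$ (from $\ell+1$ to $\ell$), while the other three witness values coincide with those of $\tau$ and, in particular, avoid both $\ell$ and $\ell+1$. Therefore no pairwise comparison among the four witness values flips, so the relative order is preserved and $\sigma$ realizes the same forbidden pattern, a contradiction.

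The interesting case is $s=2$. Since $\tau(\lambda)$ and $\tau(\mu)$ are consecutive integers with $\lambda < \mu$, they must occupy two consecutive ranks within the witness, the higher rank at $\lambda$. Enumerating the four forbidden patterns and solving $\pi(a) = \pi(b)+1$ with $a<b$, the only admissible placements, together with the $\sigma$-pattern produced by the swap, are: $1423$ with $(\lambda,\mu) = (i_2,i_4)$, yielding $1324$ in $\sigma$; $4123$ with $(\lambda,\mu) = (i_1,i_4)$, yielding $3124$ in $\sigma$; $1324$ with $(\lambda,\mu) = (i_2,i_3)$, yielding $1234$ in $\sigma$; and $3124$ with $(\lambda,\mu) = (i_1,i_3)$, yielding $2134$ in $\sigma$. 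The first two placements immediately contradict the hypothesis that $\sigma$ avoids $1324$ and $3124$.

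For the remaining two, the induced $\sigma$-pattern ($1234$ or $2134$) is not on the forbidden list, so I must invoke the maximality of $\mu$. In both subcases the witness $\sigma$-positions include $i_4 > \mu$ with $\sigma(i_4) > \sigma(\mu) = \ell+1$. From $\sigma(\mu) = \ell+1 > n+1-\mu$ I obtain $\mu \ge n-\ell+1$, hence $i_4 \ge \mu + 1 \ge n-\ell+2$ and $n+1-i_4 \le \ell-1$. The maximality of $\mu$ then forces $\sigma(i_4) \le n+1-i_4 \le \ell-1$, contradicting $\sigma(i_4) > \ell+1$. The main bookkeeping obstacle is the $s=2$ enumeration, and the role of the maximality hypothesis is precisely to eliminate the two subcases whose induced $\sigma$-pattern is not among the four forbidden ones.
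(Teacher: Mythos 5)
Your argument is correct and follows essentially the same route as the paper's proof: assume a forbidden pattern occurs in $\tau$, observe that its witness must contain both $\lambda$ and $\mu$ (else the pattern persists in $\sigma$), use the fact that $\ell$ and $\ell+1$ occupy consecutive ranks with the larger at the earlier position to pin down their placement, and then either read off a forbidden pattern in $\sigma$ (for $1423$, $4123$) or contradict the maximality of $\mu$ via the ``$4$'' of the pattern sitting at a position beyond $\mu$ (for $1324$, $3124$). Your version is simply more explicit about the $s\in\{0,1\}$ cases and the enumeration $\pi(a)=\pi(b)+1$, which the paper leaves implicit.
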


\begin{proof}
Assume by contradiction that $\tau$ contains the pattern $1423$. Then it follows that $\ell$ and $\ell+1$ correspond to elements of the pattern, otherwise the same pattern exists in $\sigma$. Since $\lambda < \mu$ by assumption, therefore $\ell$ and $\ell+1$ correspond to $3$ and $4$ in the pattern respectively. However $\sigma$ contains the pattern $(\ell \ \ell+1) 1423 = 1324$, a contradiction. The same argument shows that $\tau$ is also free of the pattern $4123$.

Assume by contradiction that $\tau$ contains the pattern $3124$. As before, if $\ell$ and $\ell+1$ are not contained in the pattern then the same pattern appears in $\sigma$. Therefore $\ell$ and $\ell+1$ correspond to {$2$ and $3$} in the pattern respectively. By assumption for all $i > \mu$ we have $\sigma(i) < \ell+1$. However the $4$ in the pattern $3124$ contradicts this assumption since $\ell+1$ corresponds to $3$. Therefore $\tau$ is free of the pattern $3124$. A similar argument shows that $\tau$ is free of the pattern $1324$.
\end{proof}

\begin{lemma} \label{prop: mutation flag}
Assume \Cref{setup flag} holds. Suppose $\sigma$ is free of patterns $1423,4123, 1324$ and $3124$ and assume that $\mu = \max\{ i \in [n] \mid \sigma(i) > n+1-i\}$. Then $\varphi^{\sigma,\ell}$ defines a combinatorial mutation from $\mathcal{P}_\sigma^{{K}}$ to $\mathcal{P}_\tau^{{K}}$.
\end{lemma}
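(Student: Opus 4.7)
The plan is to prove directly that $\varphi^{\sigma,\ell}(\mathcal{P}_\sigma^{{K}}) = \mathcal{P}_\tau^{{K}}$. Since $\mathcal{P}_\tau^{{K}}$ is a convex polytope, this equality simultaneously establishes convexity of the image and its identification with the target, so $\varphi^{\sigma,\ell}$ is a combinatorial mutation by \Cref{def : combinatorial mutation}. Two tools drive the argument: the Minkowski sum decomposition $\mathcal{P}_\gamma^{{K}} = \sum_{k \in K} \mathcal{P}_\gamma^{k}$ from \Cref{Minkowski} (for $\gamma \in \{\sigma, \tau\}$), and the vertex bijection $\varphi^{\sigma,\ell} : V(\mathcal{P}_\sigma^k) \to V(\mathcal{P}_\tau^k)$ from \Cref{prop: tropical map bijection flag}. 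A useful consequence is that any vertex $v$ of $\mathcal{P}_\sigma^k$ with $\langle f, v \rangle \ge 0$ is fixed by $\varphi^{\sigma,\ell}$ and is therefore also a vertex of $\mathcal{P}_\tau^k$; and since $\langle f, w\rangle = 0$, the map $\varphi^{\sigma,\ell}$ preserves the sign of $\langle f, \cdot\rangle$ on vertices, so the sign distribution of $V(\mathcal{P}_\tau^k)$ matches that of $V(\mathcal{P}_\sigma^k)$.

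The core step is a cancellation procedure used to prove the forward inclusion $\varphi^{\sigma,\ell}(\mathcal{P}_\sigma^{{K}}) \subseteq \mathcal{P}_\tau^{{K}}$. Given $x \in \mathcal{P}_\sigma^{{K}}$, I would write $x = \sum_{k \in K} x_k$ with each $x_k = \sum_I \lambda_I^k v_I^k$ a convex combination of vertices of $\mathcal{P}_\sigma^k$. Whenever there are distinct $k_1, k_2 \in K$ and subsets $I, J$ with $\lambda_I^{k_1}, \lambda_J^{k_2} > 0$, $\langle f, v_I^{k_1} \rangle = -1$ and $\langle f, v_J^{k_2} \rangle = +1$, set $c = \min\{\lambda_I^{k_1}, \lambda_J^{k_2}\}$ and apply \Cref{lemma: sum vertices flag} to replace $c\, v_I^{k_1} + c\, v_J^{k_2}$ by $c\, u_1 + c\, u_2$ with $u_1 \in V(\mathcal{P}_\sigma^{k_1}) \cap f^\perp$ and $u_2 \in V(\mathcal{P}_\sigma^{k_2}) \cap f^\perp$. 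Each substitution zeroes out at least one of the original coefficients and only introduces $f^\perp$-vertices, so the number of $(\pm 1)$-sign vertex contributions strictly decreases and the procedure terminates. In the resulting decomposition of $x$, all nonzero-coefficient vertices share a single sign of $\langle f, \cdot \rangle$ (possibly mixed with zeros), so $x$ lies in a single closed half-space of $\{f = 0\}$ on which $\varphi^{\sigma,\ell}$ is affine. Hence $\varphi^{\sigma,\ell}(x) = \sum_k \varphi^{\sigma,\ell}(x_k)$, and by \Cref{prop: tropical map bijection flag} each summand lies in $\mathcal{P}_\tau^k$, giving $\varphi^{\sigma,\ell}(x) \in \mathcal{P}_\tau^{{K}}$.

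The reverse inclusion $\mathcal{P}_\tau^{{K}} \subseteq \varphi^{\sigma,\ell}(\mathcal{P}_\sigma^{{K}})$ would be handled symmetrically, using that $(\varphi^{\sigma,\ell})^{-1} = \varphi_{-w, F}$. By \Cref{lemma: bad patterns flag}, $\tau$ inherits the pattern-avoidance hypotheses from $\sigma$, so an analogous cancellation procedure applied to any $y \in \mathcal{P}_\tau^{{K}}$, expressed as a Minkowski sum of convex combinations of vertices of $\mathcal{P}_\tau^k$, yields a preimage $x \in \mathcal{P}_\sigma^{{K}}$ with $\varphi^{\sigma,\ell}(x) = y$. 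Combining the two inclusions gives the desired equality.

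The main obstacle is making the reverse direction watertight: while the pattern-avoidance hypothesis for $\tau$ is guaranteed by \Cref{lemma: bad patterns flag}, the maximality condition $\mu = \max\{i : \sigma(i) > n+1-i\}$ used inside the proof of \Cref{lemma: sum vertices flag} is formulated specifically for $\sigma$. Either this condition must be shown to transfer under the swap $\sigma \mapsto \tau$ (after possibly relabelling the pivot), or the analogue of \Cref{lemma: sum vertices flag} for $\tau$ must be established by a separate case analysis in the spirit of the original proof. A secondary, purely bookkeeping, subtlety is verifying that after each cancellation step the replaced contribution is indeed realised as a valid modification of the Minkowski summand $x_{k_1}$ or $x_{k_2}$ (which follows because $u_1 \in V(\mathcal{P}_\sigma^{k_1})$ and $u_2 \in V(\mathcal{P}_\sigma^{k_2})$ by the lemma), and that the procedure does not disturb the constraint $x = \sum_{k\in K} x_k$.
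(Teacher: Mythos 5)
Your proposal is correct and follows essentially the same route as the paper: the paper likewise uses the cancellation procedure from \Cref{lemma: sum vertices flag} to show that every point of $\mathcal{P}_\sigma^{{K}}$ lying in one closed half-space of $f^\perp$ decomposes into Minkowski summands in that same half-space, applies the piecewise linearity of $\varphi^{\sigma,\ell}$ on each half-space together with the vertex bijection of \Cref{prop: tropical map bijection flag}, and then establishes the $\tau$-analogue of \Cref{lemma: sum vertices flag} by a separate case analysis, exactly as you anticipate in your closing paragraph. The obstacle you flag is resolved there by noting that the maximality of $\mu$ still forces $j_1 < \mu$ and that the pattern-freeness of $\tau$ (from \Cref{lemma: bad patterns flag}) gives $\max\{j_1,j_2\} > \lambda$, after which the explicit vertex exchanges go through with the roles of $\lambda$ and $\mu$ in the first two rows swapped.
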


\begin{proof}
Write $\mathcal{P}_\sigma^{{K},+} = \mathcal{P}_\sigma^{{K}} \cap \{x \in \RR^{(n-1)\times n} \mid \langle f, x\rangle \geq 0\}$ and similarly $\mathcal{P}_\sigma^{{K},-} = \mathcal{P}_\sigma^{{K}} \cap \{x \in \RR^{(n-1)\times n} \mid \langle f,x\rangle\leq 0\}$. 
Note that $\sum_{k\in K} \mathcal{P}^{k,+}_\sigma \subseteq \mathcal{P}^{K,+}_\sigma$. Consider $t \in \mathcal{P}^{K,+}_\sigma$. We can write $t$ in the form 
\[ t = \sum_{k\in K}\sum_{I \in \binom{[n]}{k}} \alpha_I^k v_I^k\]
for some $\alpha_I^k\geq 0$ such that $\sum_I\in \binom{[n]}{k} \alpha_I^k =1$ for all $k \in K$. Suppose that there exist $k,h \in K$ and $I\in\binom{[n]}{k}$, $J\in\binom{[n]}{h}$ such that $\alpha_I^k\neq 0$, $\alpha_J^h\neq 0$ and $v_I^k, v_J^h$ lie in two different half-spaces defined by $f$, that is, $\langle f, v_I^k \rangle = -1$ and $\langle f, v_J^h\rangle =1$. By \Cref{lemma: sum vertices flag}, there exists $u_1\in V(\mathcal{P}^k_\sigma)\cap f^\perp$ and $u_2 \in V(\mathcal{P}^h_\sigma)\cap f^\perp$ such that $v^k_I + v^h_J = u_1 + u_2$. Then, if $\alpha = \min\{ \alpha^k_I, \alpha^h_J \}$, we can substitute $\alpha^k_I v^k_I + \alpha^h_J v^h_J$ with $\alpha(u_1+u_2) + (\alpha^k_I-\alpha) v^k_I + (\alpha^h_J-\alpha) v^h_J$. The new expression for $t$ contains a strictly smaller number of pair of vertices lying in opposite half-space defined by $f$. Since the sum is finite we can repeat the process until we get an expression such that for all $k\in K$ and $I\in \binom{[n]}{k}$ with $\alpha^k_I\neq 0$, $\langle f, v^k_I\rangle \geq 0$. Then $t \in \sum_{k\in K} \mathcal{P}^{k,+}$ and we can conclude that $\sum_{k\in K} \mathcal{P}^{k,+}_\sigma = \mathcal{P}^{K,+}_\sigma$.

Since the map $\varphi^{\sigma,\ell}$ is linear on $\mathcal{P}_\sigma^{{K},+}$ and on $\mathcal{P}_\sigma^{{K},-}$, we have
\[ \varphi^{\sigma,\ell}\left(\mathcal{P}_\sigma^{{K}}\right) = \varphi^{\sigma,\ell}\left(\mathcal{P}_\sigma^{{K},+}\right) \cup \varphi^{\sigma,\ell}\left(\mathcal{P}_\sigma^{{K},-}\right) = \sum_{k\in {K}} \varphi^{\sigma,\ell}\left(\mathcal{P}_\sigma^{k,+}\right) \cup \sum_{k \in {K}} \varphi^{\sigma,\ell}\left(\mathcal{P}_\sigma^{k,-}\right). \]

By \Cref{prop: tropical map bijection flag}, it follows that
\[ \sum_{k\in {K}} \varphi^{\sigma,\ell} \left(\mathcal{P}_\sigma^{k,+} \right) \cup \sum_{k \in {K}} \varphi^{\sigma,\ell} \left( \mathcal{P}_\sigma^{k,-} \right) = \sum_{k \in {K}} \mathcal{P}_\tau^{k,+} \cup \sum_{k \in {K}} \mathcal{P}_\tau^{k,-}. \]
To conclude, it suffices to prove that $\mathcal{P}_\tau^{{K},+} = \sum_{k \in {K}} \mathcal{P}_\tau^{k,+}$ and $\mathcal{P}_\tau^{{K},-} = \sum_{k \in {K}} \mathcal{P}_\tau^{k,-}$. To do this, it suffices to show that an analogous statement to \Cref{lemma: sum vertices flag} holds for $\tau$. The claim will follow analogously to the case for $\sigma$ at the beginning of this proof.
Fix $v_I^k \in V(\mathcal{P}^k_\tau)$ and $v^h_J \in V(\mathcal{P}^h_\tau)$ with $\langle f, v^k_I\rangle=-1$, $\langle f, v^h_J\rangle=1$, and $k \neq h$. We will construct $u_1 \in V(\mathcal{P}^k_\tau)\cap f^\perp$ and $u_2 \in V(\mathcal{P}^h_\tau)\cap f^\perp$ such that $v^k_I+v^h_J = u_1+u_2$. By choice of $v^k_I$ and $v^h_J$ and by \Cref{lemma: flag inner product}, it follows immediately that $I=\{\lambda,\mu<i_3<\dots<i_k\}$ and $J=\{j_1,j_2<j_3<\dots<j_h\}$ with $\sigma(j_1) > \ell+1$ and if $h \ge 2$ then we also have $\ell > \sigma(j_2)$. By the assumption on $\mu$, it follows that $j_1 < \mu$. Since $\tau$ is free of $1423$ and $4123$, if $h \ge 2$ then it follows that $\max\{j_1, j_2\} > \lambda$. We proceed by taking cases on $h$.

\setcounter{case}{0}
\begin{case} Assume $h = 1$. Since $j_1 < \mu$ we have
\[ v_I^k + v_J^h = \left( e_{1,\lambda}+e_{2,\mu}+ \sum_{a=3}^k e_{a,i_a} \right) + e_{1,j} 
= \left( e_{1,j_1}+e_{2,\mu}+ \sum_{a=3}^k e_{a,i_a} \right) + e_{1,\lambda} = v^k_{\{ j_1,\mu<i_3<\dots<i_k \}} + v^h_{\{\lambda\}}. \]
\end{case}

\begin{case}
Assume $h=2$. It follows that $k \geq 3$. Since $j_1 < \mu$ we have that
\begin{align*} v_I^k + v_J^h 
= \left( e_{1,\lambda}+e_{2,\mu}+ \sum_{a=3}^k e_{a,i_a} \right) + \left(e_{1,j_1}+e_{2,j_2}\right)
& = \left( e_{1,j_1}+e_{2,\mu}+ \sum_{a=3}^k e_{a,i_a} \right) + \left(e_{1,\lambda}+e_{2,j_2}\right) = u_1 + u_2 
\end{align*}
where $u_1 = v^k_{\{j_1,\mu<i_3<\dots < i_k\}}$ and $u_2 =  v^h_{\{\lambda,j_2\}}$.
\end{case}

\begin{case}
Assume $h \geq 3$. If $k = 2$ then we have that $\max\{j_1, j_2 \} \ge \lambda$ and so we have
\[
v_I^k +v_J^h 
= \left( e_{1,\lambda}+ e_{2,\mu}\right) + \left( \sum_{b=1}^h e_{b,j_b} \right)
= \left( e_{1,\lambda}+e_{2,j_2}\right) + \left( e_{1,j_1}+e_{2,\mu} + \sum_{b=3}^h e_{b,j_b} \right) 
= u_1 + u_2
\]
where $u_1 = v^k_{\lambda,j_2}$ and $u_2 = v^h_{j_1,\mu<j_3<\dots<j_h}$. Otherwise if $k \ge 3$ we take cases on whether $j_2 < \mu$ or $j_2 > \mu$.

\begin{subcase}
Assume that $j_2 < \mu$. Since $\max\{j_1, j_2 \} \ge \lambda$ we have
\[
v_I^k +v_J^h 
= \left( e_{1,\lambda}+ e_{2,\mu} + \sum_{a=1}^k e_{a,i_a} \right) + \left( \sum_{b=1}^h e_{b,j_b} \right)
= \left( e_{1,\lambda}+e_{2,j_2}+\sum_{a=3}^k e_{a,i_a} \right) + \left( e_{1,j_1}+e_{2,\mu} + \sum_{b=3}^h e_{b,j_b} \right) 
= u_1 + u_2
\]
where $u_1 = v^k_{\lambda,j_2<i_3<\dots<i_k}$ and $u_2 = v^h_{j_1,\mu<j_3<\dots<j_h}$.
\end{subcase}

\begin{subcase}
Assume that $j_2 > \mu$. Since $j_1 < \mu$ we have
\[
v_I^k +v_J^h 
= \left( e_{1,\lambda}+ e_{2,\mu} + \sum_{a=1}^k e_{a,i_a} \right) + \left( \sum_{b=1}^h e_{b,j_b} \right) 
= \left( e_{1,j_1} + e_{2,\mu} + \sum_{a=3}^k e_{a,i_a} \right) + \left( e_{1,\lambda}+ e_{2,j_2} + \sum_{b=3}^h e_{b,j_b} \right) 
= u_1 + u_2
\]
where $u_1 = v^k_{j_1,\mu<i_3<\dots <i_k}$ and $u_2 = v^h_{\lambda,j_2<j_3<\dots< j_h}$.
\end{subcase}
\end{case}
So in each case we have constructed vertices $u_1 \in V(\mathcal{P}_\tau^k)$ and $u_2 \in V(\mathcal{P}_\tau^h)$ such that $v_I^k + v_J^h = u_1 + u_2$ and $\langle f, u_1\rangle=\langle f,u_2\rangle = 0$.
Hence we can conclude that
\[ \varphi^{\sigma,\ell}\left(\mathcal{P}_\sigma^{{K}}\right) = \sum_{k \in {K}} \mathcal{P}_\tau^{k,+} \cup \sum_{k \in {K}} \mathcal{P}_\tau^{k,-} = \mathcal{P}_\tau^{{K},+} \cup \mathcal{P}_\tau^{{K},-} = \mathcal{P}_\tau^{{K}}. \tag*{\qedhere}\] 
\end{proof}

\begin{proof}[\bf Proof of \Cref{thm: DSR_2 Flag varieties}]
Let $\sigma \in S_n$ be a permutation free of the patterns $1423,4123,3124$ and $1324$. We prove that there is a sequence of combinatorial mutations from $\mathcal{P}_\sigma^{{K}}$ to $\mathcal{P}^{{K}}_{w_0}$ by reverse induction on the inversion number of $\sigma$. 

Since $w_0$ has the maximum inversion number, the base case is trivial. Assume now $\sigma \neq w_0$. Let $\mu = \max \{i \in [n] \mid \sigma(i) > n+1-i\}$. Define $\ell = \sigma^{-1}(\mu) - 1$ and $\lambda = \sigma(\ell)$. By the definition of $\mu$, it immediately follows that $\lambda < \mu$ and so $\sigma$ and $\ell$ satisfy the conditions of \Cref{setup flag}. By \Cref{lemma: bad patterns flag}, we have that $\tau = (\ell \ \ell+1) \sigma$ is free of patterns $1423, 4123, 1324$ and $3124$.
Since the inversion number of $\tau$ is one more than the inversion number of $\sigma$, by induction 
there is a sequence of combinatorial mutations from $\mathcal{P}_\tau^{{K}}$ to $\mathcal{P}_{w_0}^{{K}}$. By \Cref{prop: mutation flag} there is a combinatorial mutation from $\mathcal{P}_\sigma^{{K}}$ to $\mathcal{P}_\tau^{{K}}$, which completes the proof. 
\end{proof}

\begin{corollary}
The $2$-block diagonal matching field polytopes are combinatorial mutation equivalent and give rise to toric degenerations.
\end{corollary}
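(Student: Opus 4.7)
The plan is to realize every $2$-block diagonal matching field as a matching field of the form $B^\sigma$ for a specific permutation $\sigma$, verify that this $\sigma$ avoids the four patterns $4123$, $3124$, $1423$, $1324$, and then invoke \Cref{thm: DSR_2 Flag varieties} directly. The existence of a sequence of combinatorial mutations to the Gelfand--Tsetlin polytope for every such $\sigma$ immediately yields mutation equivalence among any two $2$-block diagonal matching field polytopes (via the Gelfand--Tsetlin polytope as a ``pivot''), as well as the toric degeneration statement.

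First, using the example following \Cref{lemma: matching field weight matrix}, any $2$-block diagonal matching field $B_{\bm a}$ with $\bm a = (a_1, a_2)$ and $a_1 + a_2 = n$ coincides with $B^{w_{\bm a}}$, where $w_{\bm a} = (I_1^\downarrow, I_2^\downarrow) \in S_n$ is the concatenation of $I_1 = \{1, \dots, a_1\}$ and $I_2 = \{a_1+1, \dots, n\}$ written in descending order. Explicitly, $w_{\bm a}$ has one-line notation
\[
(a_1, a_1 - 1, \dots, 2, 1, \, n, n-1, \dots, a_1 + 2, a_1 + 1).
\]
This is the concatenation of two strictly decreasing sequences of values.

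Next, I would observe the key combinatorial fact: any permutation that is the concatenation of two strictly decreasing sequences has longest increasing subsequence of length at most~$2$. Indeed, three strictly increasing values $\sigma(i_1) < \sigma(i_2) < \sigma(i_3)$ with $i_1 < i_2 < i_3$ would force at least two of $i_1, i_2, i_3$ to lie in the same decreasing block, which is impossible. Then I would check, by inspection, that each of the four patterns $4123$, $3124$, $1423$, $1324$ contains an increasing subsequence of length $3$: in $4123$ the entries in positions $2, 3, 4$ are $1, 2, 3$; in $3124$ the entries in positions $2, 3, 4$ are $1, 2, 4$; in $1423$ the entries in positions $1, 3, 4$ are $1, 2, 3$; and in $1324$ the entries in positions $1, 2, 4$ are $1, 3, 4$. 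Containing any of these patterns would therefore produce an increasing subsequence of length~$3$ in $w_{\bm a}$, contradicting the previous observation.

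Having established that $w_{\bm a}$ avoids the four forbidden patterns, \Cref{thm: DSR_2 Flag varieties} immediately gives that the polytope $\mathcal{P}^{{K}}_{B^{w_{\bm a}}}$ is combinatorial mutation equivalent to the Gelfand--Tsetlin polytope $\mathcal{P}^{{K}}_{w_0}$ for every non-empty ${K} \subseteq [n]$, and that $B^{w_{\bm a}}$ gives rise to a toric degeneration of $\Fl({K};n)$. Since mutation equivalence is transitive, any two $2$-block diagonal matching field polytopes are mutation equivalent via $\mathcal{P}^{{K}}_{w_0}$. There is no real obstacle in the argument—the entire content is the pattern-avoidance verification, which is straightforward once the longest-increasing-subsequence observation is made.
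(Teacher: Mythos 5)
Your proposal is correct and follows essentially the same route as the paper: identify the $2$-block diagonal matching field with $B^{\sigma}$ for $\sigma$ a concatenation of two descending runs, check that $\sigma$ avoids $4123$, $3124$, $1423$, $1324$, and invoke \Cref{thm: DSR_2 Flag varieties}. The only (cosmetic) difference is in the verification step: the paper lists the four length-$4$ patterns that can actually occur ($4321$, $3214$, $2143$, $1432$), while you rule out the forbidden ones by noting that each contains an increasing subsequence of length $3$ whereas $\sigma$ has longest increasing subsequence at most $2$ — a slightly slicker but equivalent observation.
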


\begin{proof}
Consider a permutation $\sigma = (\ell, \ell-1, \dots, 1, n, n-1, \dots, 1)$ for some $\ell \in \{0,\dots,n-1\}$. Observe that the only patterns of length four which appear in $\sigma$ are $4321$, $3214$, $2143$ and $1432$. In particular, $\sigma$ avoids the patterns $4123$, $3124$, $1423$ and $1324$. Hence, by \Cref{thm: DSR_2 Flag varieties}, we have that ${\mathcal{P}_{\sigma}}$ is combinatorial mutation equivalent to the diagonal matching field polytope $\mathcal{P}_{w_0}$.
\end{proof}

\begin{remark}
    We expect that all the matching fields giving rise to toric degenerations of partial flag varieties are related by a sequence of mutations. However, the challenge in proving this via an explicit sequence of polytopes is that the intermediate polytopes may not necessarily be lattice polytopes. See \Cref{example: gr36 mutation with non lattice polytope}. However, we note that, in this example, the intermediate polytopes are still examples of NO-bodies for the  Grassmannian. In the following section, we computationally examine the matching field polytopes that give rise to toric degenerations.
\end{remark}
\begin{example}\label{example: gr36 mutation with non lattice polytope}
Consider the matching field polytope $\mathcal{P}_\sigma$ given by $\sigma = (6 \, 2 \, 4 \, 3 \, 5 \, 1)$ for $\Gr(3,6)$. Note that $\sigma$ contains the forbidden pattern $1324$ on the indices $2,3,4,5$. We want to construct a sequence of combinatorial mutations from the polytope $\mathcal{P}_\sigma$ to $\mathcal{P}_\tau$ where $\tau = ( 6 \, 2 \, 5 \, 3 \, 4 \, 1)$.
Let
\begin{align*}
    f_1 = \begin{pmatrix}
    0 & 0 & 0 & -1 & 0 & 0\\
    0 & 1 & 0 & 0 & 0 & 0\\
    0 & 0 & -1 & -1 & 0 & 0 \end{pmatrix}, \quad
    & w_1 = \begin{pmatrix}
    0 & 0 & 0 & 0 & 0 & 0\\
    0 & 0 & 1 & -1 & 0 & 0\\
    0 & 0 & -1 & 1 & 0 & 0 \end{pmatrix},\\
    f_2 = \begin{pmatrix}
    1 & 0 & 0 & 0 & 0 & 0\\
    -1 & 0 & -1 & 0 & -1 & 0\\
    0 & 0 & -1 & 0 & 0 & 0 \end{pmatrix}, \quad
    & w_2 = \begin{pmatrix}
    0 & 0 & 1 & -1 & 0 & 0\\
    0 & 0 & -1 & 1 & 0 & 0\\
    0 & 0 & 0 & 0 & 0 & 0 \end{pmatrix}.
\end{align*}
and let $F_i = \conv(0,f_i)$ for $i=1,2$.
It is possible to show that $\mathcal{P}_\tau = \varphi_{-w_1, F_1} \circ \varphi_{w_2,F_2} \circ \varphi_{w_1,F_1} (\mathcal{P}_\sigma)$ and this is a sequence of combinatorial mutations. Note that the resulted polytope $Q$ after applying the two mutations, $\varphi_{w_2,F_2} \circ \varphi_{w_1,F_1} (\mathcal{P}_\sigma)$, is not a lattice polytope, since it has one rational vertex:
\[ \begin{pmatrix}
1/2 & 0 & 0 & 0 & 1/2 & 0\\
0 & 1/2 & 1/2 & 0 & 0 & 0\\
0 & 0 & 0 & 1/2 & 0 & 1/2 \end{pmatrix} = \frac 12(v_{134} + v_{526}),
\]
where $v_{134}$ and $v_{526}$ are vertices of $\MP_\tau$. Our computation shows that the polytope $\mathcal{Q}$ can be obtained from the polytope of the hexagonal matching field in Figure~\ref{fig:tropical line arrangements}, by adding the extra rational point above. 

Note that \Cref{thm: DSR_2 Flag varieties} does not apply to the matching field $B^\tau$ as it contains the pattern $1423$ on the indices $2,3,4,5$. Using the \textit{MatchingFields} package \cite{ClarkeMatchingFieldsPackage} for \textit{Macaulay2} \cite{M2}, it is straightforward to check that $B^\sigma$ gives rise to a toric degeneration of $\Gr(3,6)$. By the sequence of mutations above, we have that $B^\tau$ also gives rise to a toric degeneration of $\Gr(3,6)$.
\end{example}

\section{Computational results}\label{sec: computational results}

We have seen in \Cref{example: gr36 mutation with non lattice polytope} that there exist matching fields $B^\sigma$ that give rise to toric degenerations of partial flag varieties that are not covered by \Cref{thm: DSR_2 Flag varieties}. However, in such cases, the matching field polytopes may be mutable. That is, the polytopes can be transformed into one another by a sequence of combinatorial mutations.

In this section, we consider a class of matching fields that generalize those in \Cref{def:matching field B sigma}. Moreover, we provide a complete computational classification of all toric degenerations obtained from these matching fields for small Grassmannians and flag varieties, namely $\Gr(3,6)$, $\Gr(3,7)$, $\Fl_4$, and $\Fl_5$.  In Examples~\ref{example: gr36 all cones}, \ref{example: gr37 all cones}, \ref{example: compute fl4} and \ref{example: compute fl5}, the matching field polytopes are constructed using the \textit{MatchingFields} package \cite{ClarkeMatchingFieldsPackage} for \textit{Macaulay2} \cite{M2}, and their properties and isomorphism classes are computed using \textit{Polymake} \cite{polymake:2000}. %

\begin{definition}\label{def: scale the second row weight matrix}
Fix $n \ge 2$ and let $p \ge n+1$ be a prime number. For each natural number $c \ge 1$ coprime to $p$ and permutation $\sigma \in S_n$ we define the weight matrix
\[
M_c^\sigma = 
\begin{bmatrix}
0           & 0             & \dots & 0             & 0\\
c\sigma(1)  & c\sigma(2)    & \dots & c\sigma(n-1)  & c\sigma(n) \\ 
np          & (n-1)p        & \dots & 2p            & p   \\
np^2        & (n-1)p^2      & \dots & 2p^2          & p^2 \\
\vdots      & \vdots        & \ddots& \vdots        & \vdots \\
np^{n-2}    & (n-1)p^{n-2}  & \dots & 2p^{n-2}      & p^{n-2} \\
\end{bmatrix}.
\]
We now show that each $M_c^\sigma$ induces a coherent matching field which we denote by $B_c^\sigma$.
\end{definition}

\begin{prop}
For all $n \ge 2$, $c \in [p-1]$ and permutations $\sigma \in S_n$, the weight matrix $M_c^\sigma$ induces a coherent matching field for the flag variety.
\end{prop}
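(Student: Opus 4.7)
The plan is to prove something stronger than the stated conclusion: that for every subset $I = \{i_1 < \cdots < i_k\} \subseteq [n]$ the cost function
\[
S_I \colon S_k \to \ZZ, \qquad \pi \mapsto \sum_{a=1}^{k} (M_c^\sigma)_{a,\, i_{\pi(a)}}
\]
is \emph{injective}. Since $S_k$ is finite, injectivity immediately gives uniqueness of the argmin of $S_I$, hence coherence of the induced matching field in the sense of \Cref{def coherent matching field}; moreover the statement is uniform in $|I|$, so it handles every partial flag variety $\Fl(K;n)$ with $K \subseteq [n]$ in one stroke.

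Unfolding the definition of $M_c^\sigma$, I would first compute, for any two permutations $\pi, \pi' \in S_k$,
\[
S_I(\pi) - S_I(\pi') = c\,\alpha + \sum_{a=3}^{k} p^{a-2}\beta_a,
\]
where $\alpha := \sigma(i_{\pi(2)}) - \sigma(i_{\pi'(2)})$ and $\beta_a := i_{\pi'(a)} - i_{\pi(a)}$: the first row contributes nothing, the second row contributes the $c\alpha$-term, and in the rows indexed by $a \ge 3$ the $\pi$-independent parts $(n+1)p^{a-2}$ cancel. Because $\sigma$ is a permutation of $[n]$ and all $i_j$ lie in $[n]$, both $|\alpha| \le n-1$ and $|\beta_a| \le n-1$ hold, and the standing hypothesis $p \ge n+1$ delivers the critical bound $|\alpha|, |\beta_a| < p$. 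Assuming $S_I(\pi) = S_I(\pi')$, I would then read the identity $c\alpha + p\beta_3 + p^2\beta_4 + \cdots + p^{k-2}\beta_k = 0$ modulo $p$: this gives $c\alpha \equiv 0 \pmod{p}$, and the coprimality hypothesis $\gcd(c,p) = 1$ combined with $|\alpha| < p$ forces $\alpha = 0$, whence $\pi(2) = \pi'(2)$ by injectivity of $\sigma$. Iterating --- dividing the remaining equation by $p$ and reducing modulo $p$ at each step --- I would obtain $\beta_3 = \beta_4 = \cdots = \beta_k = 0$, and therefore $\pi(a) = \pi'(a)$ for every $a \ge 2$. Since $\pi$ and $\pi'$ are permutations of $[k]$, this forces $\pi(1) = \pi'(1)$ as well, so $\pi = \pi'$.

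The main obstacle I anticipate is that the naive row-by-row dominance argument used in \Cref{lemma: matching field weight matrix} breaks down in this setting: with $c$ possibly as large as $p-1$, the quantity $c(n-1)$ can exceed $p$, and the usual swap argument showing that an optimal $\pi$ must satisfy $\pi(k) = k$ already fails at $k = 3$. The $p$-adic injectivity argument above sidesteps this obstruction entirely by reading the weighted sum as a base-$p$ expansion whose $p^0$-digit is precisely the $c$-scaled second-row contribution; the hypothesis $\gcd(c,p) = 1$ is exactly what is needed to make that digit distinguishable, which is why this coprimality condition appears in the very definition of $M_c^\sigma$.
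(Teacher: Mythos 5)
Your proof is correct and takes essentially the same route as the paper's: both read the induced weight as a base-$p$ expansion whose lowest digit is the $c$-scaled second-row contribution, use $\gcd(c,p)=1$ together with $p\ge n+1$ to pin down that digit, and then recover the higher digits one at a time to conclude that distinct permutations yield distinct weights. The only cosmetic difference is in how the higher digits are handled --- you iterate division by $p$ and reduce modulo $p$, while the paper isolates the largest differing digit and derives a contradiction from a geometric-series bound --- and these are interchangeable.
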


\begin{proof}
Fix a subset $I = \{j_1, \dots, j_k \} \subseteq [n]$. 
For each $\tau \in S_k$, we define
$w_\tau := \sum_{i = 1}^{k} M_{i, j_{\tau(i)}}.$
We proceed by showing that the values $w_\tau$ are distinct for each $\tau \in S_k$. Let $\tau, \tau' \in S_k$ be permutations and assume that $w_\tau = w_{\tau'}$. By definition of the weight matrix $M_c^\sigma$, we may write 
\[
w_\tau      = \sum_{i = 1}^{k-2} a_i p^i + ca_0 
\quad \text{and} \quad
w_{\tau'} = \sum_{i = 1}^{k-2} b_i p^i + cb_0 \]
with $a_i, b_i \in [n]$ for all $i \in \{0, 1, \dots, k-2 \}$. Note that $p$ is a prime number and $c, a_0, b_0$ are all coprime to $p$. Since we have that $0 = w_\tau - w_{\tau'} = c(a_0 - b_0) \mod p$, it follows that $a_0 = b_0$.

The next step is to show that $a_i = b_i$ for all $i \in \{1, \dots, k-2 \}$. Assume by contradiction that $d \in \{1, \dots, k-2\}$ is the largest index such that $a_d \neq b_d$. Without loss of generality we may assume that $a_d > b_d$. For each $i \in \{1, \dots, d\}$ we have that $a_i - b_i \ge -n \ge 1-p$. It follows that
\[
w_\tau - w_{\tau'} = (a_d - b_d)p^d + \sum_{i = 1}^{d-1} (a_i - b_i)p^i \ge p^d - \sum_{i = 1}^{d-1} (p-1)p^i = p > 0.
\]
By assumption we have $w_\tau = w_{\tau'}$ which is a contradiction. So we have shown that for any pair of permutations $\tau, \tau' \in S_k$, if $w_\tau = w_{\tau'}$ then $\tau = \tau'$. Hence the values $w_\tau$ are distinct for all $\tau \in S_k$. In particular, the minimum value $w_\tau$ is attained by a unique permutation $\tau \in S_k$. 
So, following \Cref{def coherent matching field}, the expression $\argmin_\tau(w_\tau)$ gives a well-defined permutation for any subset $I \subseteq [n]$, and so the weight matrix $M_c^\sigma$ induces a matching field.
\end{proof}

We use \textit{Macaulay2} \cite{ClarkeMatchingFieldsPackage, M2} to confirm that certain matching fields $\Bsig_c$ give rise to toric degenerations of $\Gr(k,n)$ and $\Fl_n$. To do this, we compute the initial ideal of the corresponding Pl\"ucker ideal with respect to the weight vector $w$ induced by the matrix $M^\sigma_c$. Whenever the matching field gives rise to a toric degeneration, it follows that $w$ lies in the corresponding tropicalization. In the following examples, we describe the maximal cones of the tropical varieties that contain these weight vectors $w$. We do this by comparing the polytopes. Note that the maximal cones of the tropicalization are listed up to symmetry under the symmetric group $S_n$. Since the $f$-vector of a matching field polytope is invariant under $S_n$, we compute the $f$-vectors of the matching field polytopes, using \textit{Polymake} \cite{polymake:2000}, and compare them to the already-known $f$-vectors associated to the maximal cones of the tropicalizations of $\Gr(k,n)^{\circ}$ and $\Fl_n^{\circ}$. We note that some of the polytopes associated to non-isomorphic cones may have the same $f$-vector. In this case, we differentiate the polytopes by comparing their face lattices.

\begin{example}[The Grassmannian $\Gr(3,6)$]\label{example: gr36 all cones}
\begin{table}[ht]
        \centering
            \begingroup\fontsize{8}{1em}\selectfont
\begin{tabular}{cccc}
            \toprule
            Cone type & $c$ & $\sigma$ & Matching field polytope f-vector \\
            \midrule
            EEEG    & $3$ & $(6,4,3,5,2,1)$ & $ (20, 123, 386, 728, 882, 700, 358, 111, 18)$ \\
            EEFF(a) & $1$ & $(6,5,4,3,2,1)$ & $ (20, 122, 372, 670, 766, 571, 276, 83, 14)$ \\
            EEFF(b) & $1$ & $(1,6,5,4,3,2)$ & $ (20, 122, 376, 690, 807, 615, 302, 91, 15)$ \\
            EEFG    & $1$ & $(4,3,2,1,6,5)$ & $ (20, 122, 378, 701, 832, 645, 322, 98, 16)$ \\
            EFFG    & $1$ & $(2,1,6,5,4,3)$ & $(20, 122, 376, 690, 807, 615, 302, 91, 15)$ \\
            \midrule
            FFFGG &  \multicolumn{3}{l}{a non-prime cone from \Cref{example: gr36 hexagon}} \\
            EEEE  &  \multicolumn{3}{l}{a prime cone that cannot be obtained from a matching field} \\
            \bottomrule
        \end{tabular}
        \caption{Maximal cones of $\Trop(\Gr(3,6)^{\circ})$ and their associated matching field polytopes $B_c^\sigma$.}
        \label{table: gr36 polytopes}
        \endgroup
\end{table}There are seven top-dimensional cones of $\Trop(\Gr(3,6)^{\circ})$ up to isomorphism~\cite{speyer2004tropical}. Of these cones, six are prime. 
Of these six prime cones, five of them may be realised by matching fields $B_c^\sigma$. In \Cref{table: gr36 polytopes} below we list the type of the maximal cones of $\Trop(\Gr(3,6)^{\circ})$ together with an example of a matching field that gives rise to the same toric degeneration.

The isomorphism type of the toric variety of a matching field $\Lambda$ is characterised in \cite{Mohammadi2019} by its corresponding \textit{tropical line arrangement $T(\Lambda)$}, namely by the shape of the $(2,2,2)$-cell of $T(\Lambda)$.
A straightforward computation shows that for $c = 1$, the $(2,2,2)$-cell of $T(B_c^\sigma)$ cannot be a triangle for any permutation $\sigma \in S_6$.     However, this is possible for $c = 3$ and $\sigma = 643521$. See \Cref{fig:tropical line arrangements} (left-most). 
We note that the cones of type EEEE cannot be realised by any matching field. See \cite[Table~1]{Mohammadi2019}.

    \begin{figure}
        \centering
        \includegraphics[scale=0.7]{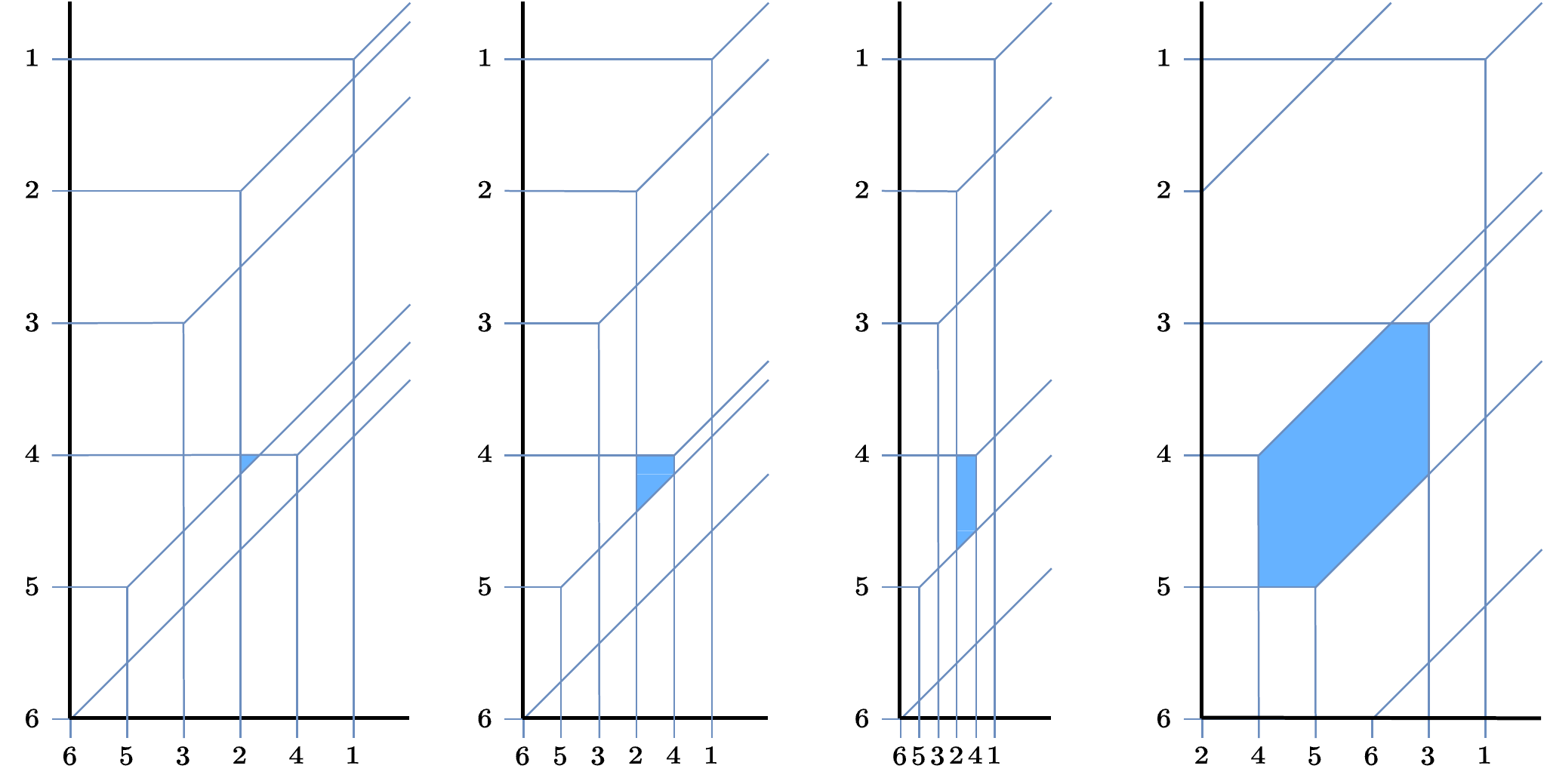}
        \caption{Tropical line arrangements associated to $B_c^\sigma$ with the $(2,2,2)$-cell shaded. The right-most line arrangement corresponds to $B_{3}^{615234}$. From left to right the other line arrangements correspond to $\sigma = 643521$ and $c = 1$, $2$ and $3$ respectively.}
        \label{fig:tropical line arrangements}
    \end{figure}
 \end{example}

In this paper, we have shown that, for all Grassmannians, the matching field polytopes of $B_c^\sigma$ are mutation equivalent when $c = 1$ and $\sigma$ is free a certain collection of permutations. We believe that our proofs can be extended to all matching fields $B_c^\sigma$, for any $c$.
Note that for all $c \in [p-1]$, the matching fields $B_c^{w_0}$ where $w_0 = (n, n-1, \dots, 1)$ give rise to the Gelfand-Tsetlin polytope.

\begin{conjecture}
    The matching field polytopes associated to $B_c^\sigma$ that give rise to toric degenerations of $\Gr(k,n)$ are all mutation equivalent for $\sigma \in S_n$ and $c \in [p-1]$.
\end{conjecture}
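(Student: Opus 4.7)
The plan is to extend the mutation framework developed for \Cref{thm: DSR_2 Flag varieties} so that it handles both parameters $\sigma$ and $c$ simultaneously. The starting observation is that $B_c^{w_0}$ coincides with the diagonal matching field $B^{w_0}$ for every admissible $c$: indeed, in the weight matrix $M_c^{w_0}$ the row of order $p^{i-2}$ forces $\tau(k)=k$ and, by induction downwards, $\tau = \mathrm{id}$ regardless of $c$. Consequently the Gelfand-Tsetlin polytope $\mathcal{P}_{w_0}^k$ serves as a common hub, and it suffices to produce, for each pair $(\sigma,c)$ such that $B_c^\sigma$ induces a toric degeneration of $\Gr(k,n)$, a sequence of combinatorial mutations from $\mathcal{P}_{B_c^\sigma}^k$ to $\mathcal{P}_{w_0}^k$.

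The first step would be to quantify how $B_c^\sigma$ actually depends on $c$. A direct analysis of $M_c^\sigma$ shows that for $k \le 3$ one has $B_c^\sigma = B^\sigma$ identically, because the third and higher rows of $M_c^\sigma$ dominate the expression $\sum_a M_{a, i_{\tau(a)}}$ and reduce the optimization to a choice between rows $1$ and $2$ which is unaffected by a positive scaling of the second row. For $k \ge 4$, however, the term $c\sigma(i_{\tau(2)})$ can become comparable to $(n+1-i_{\tau(3)})p$ when $c$ is close to $p$, and this is precisely what produces new matching fields such as the one from \Cref{example: gr36 all cones} with triangular $(2,2,2)$-cell. I would catalogue, in terms of threshold inequalities on $c$, the $k$-subsets $I$ on which $B_c^\sigma(I) \neq B^\sigma(I)$; this gives a finite stratification of $(\sigma,c)$-parameter space into chambers, each chamber corresponding to a single matching field.

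The second step is to generalise \Cref{def flag tropical map}. Given a $(\sigma,c)$ inside some chamber and an adjacent transposition reducing the number of inversions, the weight vector $w$ encoding the swap $\lambda \leftrightarrow \mu$ should remain the same, but the polytope $F \subset w^\perp$ in $\varphi_{w,F}$ may have to be enlarged beyond a line segment: several families of vertices can move simultaneously, indexed not only by subsets of the form $\{\lambda,\mu < \dots\}$ but also by subsets where the $c$-dependent behaviour of rows $2$ and $3$ is triggered. The analogue of \Cref{lemma: flag inner product} would then assert that the pairing $\langle f, v\rangle$ stays bounded (in a suitable normalised sense) on vertices, and the analogue of \Cref{lemma: sum vertices flag} would guarantee that every vertex with $\langle f,v\rangle = -1$ can be paired with one having $\langle f,v\rangle = +1$ to produce a sum of two $f$-orthogonal vertices. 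A separate family of mutations is needed to move between chambers with the same $\sigma$ but different $c$: the natural candidate is a tropical map whose $w$-direction encodes the swap of rows $2$ and $3$ for certain subsets and whose $F$ records the threshold at which $c$ crosses between chambers.

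The main obstacle is precisely the interaction between $\sigma$-swaps and $c$-changes. For $c=1$ the pattern-avoidance conditions of \Cref{thm: DSR_2 Flag varieties} are tight, and as \Cref{example: gr36 mutation with non lattice polytope} shows, even there one sometimes has to leave the lattice-polytope world and produce intermediate NO-bodies. For $c > 1$ I expect the correct combinatorial characterisation of toric-degeneration-inducing $(\sigma,c)$ to involve a refined pattern-avoidance condition that also records a coarse $c$-regime, and proving that every such pair is connected to some $(\sigma',1)$ (or directly to $w_0$) by admissible mutations will likely require the same chamber-by-chamber bookkeeping used in the third step, combined with an inductive argument on the pair (inversion number, $c$) in some lexicographic order. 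The computational evidence in \Cref{example: gr36 all cones} and \Cref{example: gr37 all cones} strongly suggests that such mutations always exist, and that each one corresponds to crossing a codimension-one face of the tropical fan, which should be the final geometric statement extracted from this approach.
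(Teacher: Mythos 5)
This statement is a \emph{conjecture} in the paper: the authors deliberately leave it open and supply no proof, so there is no argument of theirs to compare yours against. What you have written is a research programme rather than a proof --- every substantive step (the chamber decomposition of $(\sigma,c)$-space, the enlarged factor polytopes $F$, the analogues of \Cref{lemma: flag inner product} and \Cref{lemma: sum vertices flag}, the lexicographic induction) is phrased as ``would'', ``should'' or ``I expect'', and none of it is carried out. As a proof of the conjecture it therefore does not get off the ground, though as a plan of attack it is broadly sensible and consistent with how \Cref{thm: DSR_2 Flag varieties} is proved.

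Beyond the incompleteness, your first step contains a concrete error. You claim that for $k \le 3$ one has $B_c^\sigma = B^\sigma$ identically because the third row of $M_c^\sigma$ dominates. This is false: since $c$ may be as large as $p-1$, the second-row entries $c\sigma(j)$ can differ by up to $(p-1)(n-1)$, which exceeds the gap $p$ between consecutive third-row entries, so the minimisation over $\tau \in S_3$ genuinely depends on $c$. The paper's own computations for $\Gr(3,6)$ (where $k=3$) refute your claim directly: the cone of type EEEG is realised by $B_3^{(6,4,3,5,2,1)}$ but by no $B_1^\sigma$ (\Cref{example: gr36 all cones}, where it is noted that the $(2,2,2)$-cell of $T(B_1^\sigma)$ is never a triangle), and the hexagonal matching field $B_3^{615234}$ of \Cref{example: gr36 hexagon} likewise arises only for $c>1$. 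So the $c$-dependence you defer to $k \ge 4$ already occurs at $k = 3$, and your proposed stratification would have to start there. Note also that \Cref{example: gr36 hexagon} shows the restriction to degeneration-inducing pairs $(\sigma,c)$ is essential (the hexagonal polytope has normalised volume $38$ versus $42$ for Gelfand--Tsetlin), so any successful argument must build in a characterisation of which $(\sigma,c)$ are admissible --- a characterisation that is itself unknown and that your sketch only gestures at.
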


\begin{example} \label{example: gr36 hexagon}
    In the case $c = 3$, it is shown in \cite{Mohammadi2019} that a matching field for $\Gr(3,6)$ does not give rise to a toric degeneration if and only if the bounded $(2,2,2)$-cell of its tropical line arrangement is hexagonal. We note that if $c = 3$, then hexagonal matching fields appear among matching fields of the form $B_c^\sigma$.
    For example if $\sigma = 615234$ then the weight matrix $M_c^\sigma$ 
      gives rise to the tropical line arrangement in \Cref{fig:tropical line arrangements} (right-most). We note that the polytope $\mathcal{P}$ associated to this matching field is not mutation equivalent to the Gelfand-Tsetlin polytope. One way to see this is by noting that the normalised volume of $\mathcal{P}$ is $38$ whereas the normalised volume of the Gelfand-Tsetlin polytope is $42$.
\end{example}

\begin{example}[The Grassmannian $\Gr(3,7)$]\label{example: gr37 all cones}
The tropicalization of $\Gr(3,7)^{\circ}$ has a total of $125$ maximal cones, which were computed in \cite{herrmann2009draw}. Using \textit{Macaulay2}, we have observed that $69$ of these cones are prime.
However, $5$ of them do not arise from matching fields, as they contain a  
\textit{sub-weight} of type EEEE. That is, for each such weight vector $w$ there exists an index $i \in [7]$ such that the restriction of $w$ to the coordinates $J \subseteq [6]$ with $i \notin J$, lies in a cone of type EEEE in the tropicalization of $\Gr(3,6)^{\circ}$.
Using { weight matrices} $M_c^\sigma$ with $c \le 6$, we can obtain $40$ of the prime cones. Following the numbering conventions in \cite{herrmann2009draw} listed on the corresponding \href{https://users-math.au.dk/jensen/Research/G3_7/grassmann3_7.html}{{website}}, we list the prime cones of $\Trop(\Gr(3,7)^{\circ})$ together with their associated matching fields in Table~\ref{tab: gr37 matching field polytopes}.
For each prime cone, we compute the toric polytope associated to the corresponding initial ideal.
We identify whether that polytope is isomorphic to any matching field polytope $\mathcal{P}_{B_c^\sigma}$ where $c \in [6]$ and $\sigma \in S_7$. For example, the polytope associated to Gr\"obner cone number $14$ is isomorphic to the matching field polytopes for $B_2^{1463725}$, $B_3^{1453627}$, $B_5^{1365724}$ and $B_6^{1365724}$ and is not isomorphic to any matching field polytope $B_1^\sigma$.

{
    \centering
    \begingroup\fontsize{5.9}{1em}\selectfont
    \begin{longtable}[ht]{lll}
        \toprule
        Gr\"obner
        cone index & Polytope f-vector & List of $(c, \sigma)$ such that $B_c^\sigma$ gives the same toric degeneration \\
        \midrule 
        \endhead
$4$ & \texttt{35 335 1635 4918 9927 14022 14106 10101 5053 1691 347 36} & None -- has EEEE as a sub-weight \\ 
$6$ & \texttt{35 335 1631 4885 9807 13770 13770 9807 4885 1631 335 35} & None -- has EEEE as a sub-weight \\ 
$8$ & \texttt{35 334 1624 4866 9787 13784 13840 9905 4961 1666 344 36} & None \\ 
$9$ & \texttt{35 333 1613 4813 9639 13518 13518 9639 4813 1613 333 35} & None \\ 
$10$ & \texttt{35 333 1618 4854 9787 13826 13924 9989 5009 1681 346 36} &  None -- has EEEE as a sub-weight \\ 
\hline
$13$ & \texttt{35 333 1601 4709 9240 12629 12251 8442 4064 1314 264 28} &  None -- has EEEE as a sub-weight \\ 
$14$ & \texttt{35 332 1605 4786 9591 13476 13518 9681 4861 1640 341 36} & $(2, 1463725)$, $(3, 1453627)$, $(4, 1465723)$, $(c, 1365724): c \in \{5,6\}$ \\
$15$ & \texttt{35 332 1597 4714 9304 12811 12531 8708 4224 1373 276 29} & $(2, 3452617)$, $(3, 2456713)$, $(4, 1456723)$, $(c, 1356724): c \in \{5,6\}$\\
$16$ & \texttt{35 332 1607 4802 9647 13588 13658 9793 4917 1656 343 36} & None \\ 
$17$ & \texttt{35 331 1596 4749 9499 13322 13336 9527 4769 1603 332 35} & None \\ 
\hline
$18$ & \texttt{35 332 1603 4769 9527 13336 13322 9499 4749 1596 331 35} & $(2, 1364725)$, $(3, 1354627)$, $(c, 1367425): c \in \{4,5,6\}$ \\
$19$ & \texttt{35 332 1588 4642 9050 12293 11859 8134 3902 1259 253 27} & $(2, 1346725)$, $(3, 1345627)$, $(4, 2345671)$, $(5, 2456713)$, $(6, 1467532)$\\
$21$ & \texttt{35 331 1585 4652 9121 12468 12104 8351 4027 1305 263 28} & None \\ 
$22$ & \texttt{35 331 1576 4579 8859 11922 11376 7707 3649 1163 232 25} & None \\ 
$23$ & \texttt{35 331 1570 4532 8697 11600 10970 7371 3467 1101 220 24} & None \\ 
\hline
$24$ & \texttt{35 331 1584 4645 9100 12433 12069 8330 4020 1304 263 28} & None \\ 
$25$ & \texttt{35 329 1566 4573 8932 12181 11817 8162 3948 1286 261 28} & $(c, 1354267): c \in \{1,2,3\}$, $(c, 3541267): c \in \{4,5,6\}$ \\
$26$ & \texttt{35 329 1558 4503 8663 11585 10978 7384 3473 1102 220 24} & $(c, 1345267): c \in \{1,2,3\}$, $(c, 3451267): c \in \{4,5,6\}$ \\
$27$ & \texttt{35 329 1568 4588 8981 12272 11922 8239 3983 1295 262 28} & None \\ 
$28$ & \texttt{35 329 1535 4329 8084 10474 9625 6301 2904 913 184 21} & $(c, 1342567): c \in \{1,2,3\}$, $(c, 1236745): c \in \{4,5,6\}$\\
\hline
$29$ & \texttt{35 329 1567 4581 8960 12237 11887 8218 3976 1294 262 28} & None \\ 
$30$ & \texttt{35 329 1555 4483 8606 11495 10893 7336 3458 1100 220 24} & $(c, 1352467): c \in \{1,2,3\}$, $(c, 1263745): c \in \{4,5 \}$, $(6, 1263547)$\\
$31$ & \texttt{35 329 1565 4566 8911 12146 11782 8141 3941 1285 261 28} & $(c, 1354627): c \in \{1,2\}$, $(c, 1453267): c \in \{3,4,5,6\}$\\
$32$ & \texttt{35 329 1555 4483 8606 11495 10893 7336 3458 1100 220 24} & $(c, 1345627): c \in \{1,2\}$, $(3, 3452167)$, $(4, 1245637)$, $(c, 4532167): c \in \{5,6\}$\\
$33$ & \texttt{35 329 1565 4565 8903 12118 11726 8071 3885 1257 253 27} & $(3, 5672413)$, $(c, 1362745): c \in \{4,5,6\}$\\
\hline
$34$ & \texttt{35 329 1514 4177 7599 9579 8573 5485 2487 778 159 19} & $(c, 1234567): c \in [6]$ \\
$35$ & \texttt{35 329 1528 4276 7907 10132 9204 5959 2721 851 172 20} & $(c, 1235467): c \in [6]$ \\
$36$ & \texttt{35 329 1546 4411 8352 10977 10221 6762 3136 986 197 22} & None \\ 
$37$ & \texttt{35 329 1548 4424 8388 11032 10271 6789 3144 987 197 22} & None \\ 
$38$ & \texttt{35 329 1546 4411 8352 10977 10221 6762 3136 986 197 22} & $(c, 1253467): c \in [6]$ \\
\hline
$39$ & \texttt{35 329 1558 4503 8663 11585 10978 7384 3473 1102 220 24} & None \\ 
$41$ & \texttt{35 329 1558 4503 8663 11585 10978 7384 3473 1102 220 24} & None \\ 
$42$ & \texttt{35 329 1555 4483 8606 11495 10893 7336 3458 1100 220 24} & None \\ 
$44$ & \texttt{35 329 1546 4411 8352 10977 10221 6762 3136 986 197 22} & None \\ 
$45$ & \texttt{35 330 1586 4705 9387 13140 13140 9387 4705 1586 330 35} & $(5, 2564713)$, $(6, 5624713)$ \\
\hline
$46$ & \texttt{35 335 1638 4942 10011 14190 14316 10269 5137 1715 350 36} &  None -- has EEEE as a sub-weight \\ 
$47$ & \texttt{35 333 1614 4822 9675 13602 13644 9765 4897 1649 342 36} & $(2, 2365714)$, $(3, 1365724)$, $(c, 1354627): c \in \{4,5,6\}$ \\
$48$ & \texttt{35 333 1607 4760 9431 13042 12818 8953 4365 1425 287 30} & $(2, 2356714)$, $(3, 1356724)$, $(c, 1345627): c \in \{4,5,6\}$ \\
$49$ & \texttt{35 333 1617 4846 9759 13770 13854 9933 4981 1673 345 36} & None \\ 
$52$ & \texttt{35 331 1589 4686 9248 12741 12475 8680 4216 1372 276 29} & $(2, 2367514)$, $(3, 1367524)$, $(4, 1356427)$, $(c, 1456732): c \in \{5,6\}$ \\
\hline
$53$ & \texttt{35 332 1600 4739 9396 13007 12797 8946 4364 1425 287 30} & $(2, 3561724)$, $(3, 2367415)$, $(4, 1367524)$, $(c, 1356427): c \in \{5,6\}$\\
$54$ & \texttt{35 331 1591 4704 9319 12902 12706 8897 4349 1423 287 30} & $(3, 1462735)$, $(4, 1463752)$, $(c, 1453672): c \in \{5,6\}$ \\
$57$ & \texttt{35 331 1583 4637 9072 12377 11999 8274 3992 1296 262 28} & $(2, 2365174)$, $(3, 1365274)$, $(c, 1265734): c \in \{4,5,6\}$ \\
$58$ & \texttt{35 331 1577 4586 8880 11957 11411 7728 3656 1164 232 25} & $(2, 2356174)$, $(3, 1356274)$, $(c, 1256734): c \in \{4,5,6\}$ \\
$59$ & \texttt{35 331 1587 4667 9170 12559 12209 8428 4062 1314 264 28} & None \\ 
\hline
$60$ & \texttt{35 331 1584 4645 9100 12433 12069 8330 4020 1304 263 28} & None \\ 
$61$ & \texttt{35 331 1582 4630 9051 12342 11964 8253 3985 1295 262 28} & $(3, 1264735)$, $(c, 1254637): c \in \{4,5,6\}$ \\
$62$ & \texttt{35 331 1570 4532 8697 11600 10970 7371 3467 1101 220 24} & $(2, 3456127)$, $(3, 1246735)$, $(c, 1245637): c \in \{4,5,6\}$\\
$63$ & \texttt{35 329 1562 4537 8789 11852 11334 7693 3647 1163 232 25} & $(2, 3561274)$, $(3, 3451267)$, $(4, 1563274)$, $(c, 1267534): c \in \{5, 6 \}$\\
$64$ & \texttt{35 329 1562 4537 8789 11852 11334 7693 3647 1163 232 25} & $(c, 1356247): c \in \{1,2 \}$, $(3, 1452637)$, $(4, 1453672)$, $(c, 4536172): c \in \{5, 6\}$\\
\hline
$65$ & \texttt{35 329 1558 4503 8663 11585 10978 7384 3473 1102 220 24} & $(4, 1267534)$, $(5, 1367542)$, $(6, 1563274)$ \\ 
$68$ & \texttt{35 329 1528 4276 7907 10132 9204 5959 2721 851 172 20} & $(c, 1235647): c \in [6]$ \\
$69$ & \texttt{35 329 1558 4503 8663 11585 10978 7384 3473 1102 220 24} & None \\ 
$70$ & \texttt{35 329 1546 4411 8352 10977 10221 6762 3136 986 197 22} & $(c, 1253647): c \in [6]$\\
$71$ & \texttt{35 329 1554 4475 8578 11439 10823 7280 3430 1092 219 24} & $(c, 1254637): c \in \{1,2,3\}$, $(4, 1354672)$, $(c, 5613247): c \in \{5,6\}$\\
\hline
$72$ & \texttt{35 329 1546 4411 8352 10977 10221 6762 3136 986 197 22} & $(c, 1245637): c \in \{1,2,3\}$, $(4, 1345672)$, $(c, 4563271): c \in \{5, 6\}$\\
$73$ & \texttt{35 329 1555 4483 8606 11495 10893 7336 3458 1100 220 24} & $(c, 1352647): c \in [6]$ \\
$74$ & \texttt{35 329 1554 4475 8578 11439 10823 7280 3430 1092 219 24} & $(c, 1254367): c \in [6]$ \\
$75$ & \texttt{35 329 1548 4424 8388 11032 10271 6789 3144 987 197 22} & $(c, 1245367): c \in [6]$\\
$76$ & \texttt{35 329 1561 4530 8768 11817 11299 7672 3640 1162 232 25} & $(c, 1356427): c \in \{1,2,3\}$, $(4, 2356471)$, $(5, 1675423)$, $(6, 3674251)$\\
\hline
$77$ & \texttt{35 329 1558 4503 8663 11585 10978 7384 3473 1102 220 24} & None \\ 
$78$ & \texttt{35 329 1528 4276 7907 10132 9204 5959 2721 851 172 20} & None \\ 
$82$ & \texttt{35 329 1565 4566 8911 12146 11782 8141 3941 1285 261 28} & $(3, 2564173)$, $(4, 2367451)$, $(c, 1267435): c \in \{5,6\}$ \\
$83$ & \texttt{35 329 1546 4411 8352 10977 10221 6762 3136 986 197 22} & None \\ 
$84$ & \texttt{35 329 1546 4411 8352 10977 10221 6762 3136 986 197 22} & None \\ 
\hline
$89$ & \texttt{35 329 1546 4411 8352 10977 10221 6762 3136 986 197 22} & $(2, 4561237), (3, 2461753), (c, 1452673): c \in \{4, 5, 6\}$\\
$90$ & \texttt{35 329 1548 4424 8388 11032 10271 6789 3144 987 197 22} & $(c, 1256347): c \in \{1,2,3\}$, $(c, 1256473): c \in \{4,5,6\}$\\
$94$ & \texttt{35 329 1548 4424 8388 11032 10271 6789 3144 987 197 22} & $(c, 1256437): c \in [6]$\\
$98$ & \texttt{35 329 1554 4475 8578 11439 10823 7280 3430 1092 219 24} & $(3, 3564172)$, $(4, 1267435)$, $(c, 2453167): c \in \{5,6\}$\\
        \bottomrule
     \caption{Maximal prime cones of $\Trop(\Gr(3,7)^{\circ})$ and their associated matching fields  $B_c^\sigma$. 
     } \label{tab: gr37 matching field polytopes}
    \end{longtable}
    \endgroup
}
\end{example}

\begin{example}[The flag variety $\Fl_4$]
\label{example: compute fl4}
{In \cite{bossinger2017computing},} it is shown that $\Trop(\Fl_4^{\circ})$ has $78$ maximal cones, of which $72$ are prime. They are partitioned into 4 orbits. 
In \Cref{table: flag 4 matching fields} we list the prime cone orbits along with matching fields $B_c^\sigma$ which give rise to it.

\begin{example}[The flag variety $\Fl_5$] \label{example: compute fl5}
In \cite{bossinger2017computing}, it is shown that $\Trop(\Fl_5^{\circ})$ has $69780$ maximal cones which are partitioned into $536$ orbits under the action of $S_5 \times \ZZ_2$. There are $180$ prime cone orbits which give rise to non-isomorphic toric degenerations. In \Cref{table: flag 5 matching fields} we list the prime cones that correspond to toric degenerations via matching fields $B_c^\sigma$. The table follows the notation in \cite{bossinger2017computing}, giving the \textit{orbit index} and \textit{combinatorial equivalences} that include the string polytopes S$n$ and the Gelfand-Tsetlin polytope.

\begin{table}[ht]
    {\centering
    \begingroup\fontsize{7}{1em}\selectfont
    \begin{tabular}{lllll}
    \toprule
       Orbit  & Size & $F$-vector & $(c, \sigma)$ such that $B_c^\sigma$ induces the degeneration & Combinatorial equiv.\\
    \midrule
        $1$ & $24$ & \texttt{42 141 202 153 63 13} & $(c, 2341) : c \in \{ 3, 4\}$ & S2 \\
        $2$ & $12$ & \texttt{40 132 186 139 57 12} & $(c, 2341) : c \in \{1,2\}$, $(c, 3421) : c \in \{3,4\}$ & S1 (Gelfand-Tsetlin) \\
        $3$ & $12$ & \texttt{42 141 202 153 63 13} & $(c, 1342) : c \in [4]$ & S3 (FFLV)\\
        $4$ & $24$ & \texttt{43 146 212 163 68 14} & $(c, 1234) : c \in [4]$\\
    \bottomrule
    \end{tabular}
    \caption{Maximal prime cone orbits in $\Trop(\Fl_4^{\circ})$ and their associated matching fields $B_c^\sigma$.}
    \label{table: flag 4 matching fields}
    \endgroup}
\end{table}
\end{example}

{
    \centering
    \begingroup\fontsize{6.2}{1em}\selectfont
    \begin{longtable}[ht]{llll}
        \toprule
        Orbit & Polytope f-vector & $(c, \sigma)$ s.t. $B_c^\sigma$ induces the degeneration & Combinatorial equiv.\\
        \midrule 
        \endhead
        $32$ & \texttt{397 2363 6416 10313 10755 7536 3561 1109 215 23} &  $(c, 14532) : c \in [6] $ & S22\\
        $37$ & \texttt{368 2154 5755 9111 9373 6497 3052 953 188 21} & $(c, 24531) : c \in \{1,2,3\} $ & S21\\
        $38$ & \texttt{379 2214 5892 9280 9494 6547 3063 954 188 21} & $(c, 45312) : c \in \{1,2,3\}$ & S27, S28\\
        $40$ & \texttt{358 2069 5453 8516 8653 5941 2778 870 174 20} & $(c, 45321) : c \in [6]$ & S1, S18, S26,
        S29 (Gelfand-Tsetlin)\\
        $65$ & \texttt{428 2608 7269 12028 12946 9377 4576 1462 285 29} & $(c, 12435) : c \in [6]$ &\\
        \midrule
        $67$ & \texttt{418 2510 6876 11157 11753 8321 3969 1243 240 25} & $(c, 12345) : c \in [6]$ &\\
        $68$ & \texttt{406 2442 6713 10943 11587 8245 3950 1241 240 25} & $(c, 13452) : c \in \{1,2,3\}$, $(c, 34512) : c \in \{4,5,6\}$ 
        &\\
        $69$ & \texttt{373 2199 5926 9474 9849 6897 3267 1024 201 22} & $(c, 23451) : c \in \{1,2\}$ &\\
        $86$ & \texttt{397 2363 6416 10313 10755 7536 3561 1109 215 23} & $(3, 34512)$ &\\
        $87$ & \texttt{368 2154 5755 9111 9373 6497 3052 953 188 21} & $(4, 34521)$ & S5, S31\\
        \midrule
        $94$ & \texttt{426 2597 7244 11998 12926 9370 4575 1462 285 29} & $(c, 13425) : c \in [6]$ &\\
        $95$ & \texttt{440 2708 7630 12769 13898 10169 5001 1603 311 31} & $(c, 13452) : c \in \{4,5,6\}$ &\\
        $97$ & \texttt{412 2479 6810 11083 11707 8306 3967 1243 240 25} & $(c, 34215) : c \in [6]$ &\\
        $98$ & \texttt{415 2511 6945 11391 12133 8679 4174 1313 253 26} & $(3, 23451)$ &\\
        $115$ & \texttt{411 2470 6776 11012 11617 8235 3933 1234 239 25} & $(c, 45123) : c \in [6]$ &\\
        \midrule
        $118$ & \texttt{401 2387 6477 10398 10825 7570 3570 1110 215 23} & $(c, 45213) : c \in \{1,2,3,4\}$ &\\
        $119$ & \texttt{405 2427 6638 10751 11296 7969 3785 1181 228 24} & $(c, 34521) : c \in \{5,6\}$ & S6\\
        $126$ & \texttt{423 2561 7078 11586 12303 8767 4199 1316 253 26} & $(c, 23415) : c \in \{4,5,6\}$ &\\
        $137$ & \texttt{433 2646 7390 12236 13150 9482 4589 1448 278 28} & $(c, 23451) : c \in \{4,5,6\}$ &\\
        $145$ & \texttt{410 2459 6725 10881 11411 8029 3802 1183 228 24} & $(c, 45231) : c \in \{4,5,6\}$ &\\
        \midrule
        $147$ & \texttt{419 2522 6922 11243 11842 8373 3985 1245 240 25} & $(c, 24531) : c \in \{5,6\}$ &\\
        $151$ & \texttt{420 2541 7021 11496 12218 8719 4184 1314 253 26} & $(c, 12453) : c \in [6]$ &\\
    \bottomrule
    \caption{Maximal prime cones of $\Trop(\Fl_5^{\circ})$ and their associated matching fields  $B_c^\sigma$.}
    \label{table: flag 5 matching fields}
    \end{longtable}
    \endgroup
}
\end{example}

\bibliographystyle{ieeetr}
\bibliography{bibfile.bib}

\noindent 
\textbf{Author's addresses}

\noindent
School of Mathematics, University of Edinburgh, Edinburgh, United Kingdom
\\ E-mail address: {\tt oliver.clarke@ed.ac.uk}

\medskip  \noindent
Department of Computer Science, KU Leuven, Celestijnenlaan 200A, B-3001 Leuven, Belgium\\ 
   Department of Mathematics, KU Leuven, Celestijnenlaan 200B, B-3001 Leuven, Belgium\\
 Department of Mathematics and Statistics,
 UiT – The Arctic University of Norway, 9037 Troms\o, Norway
 \\ E-mail address: {\tt fatemeh.mohammadi@kuleuven.be}

\medskip  \noindent
Department of Mathematics, KU Leuven, Celestijnenlaan 200B, B-3001 Leuven, Belgium
\\ E-mail address: {\tt francesca.zaffalon@kuleuven.be}

\end{document}